\newcolumntype{H}{>{\setbox0=\hbox\bgroup}c<{\egroup}@{}}
\newcolumntype{Z}{>{\setbox0=\hbox\bgroup}c<{\egroup}@{\hspace*{-\tabcolsep}}}
\definecolor{verylight}{gray}{0.97}
\definecolor{light}{gray}{0.93}
\definecolor{medium}{gray}{0.82}
\def\NZQ{\Bbb}               
\def\NN{{\NZQ N}}
\def\ZZ{{\NZQ Z}}
\def\F{{\mathcal F}}
\def\opn#1#2{\def#1{\operatorname{#2}}} 
\opn\chara{char} \opn\length{\ell} \opn\pd{pd} \opn\rk{rk}
\opn\projdim{proj\,dim} \opn\injdim{inj\,dim} \opn\rank{rank}
\opn\depth{depth} \opn\grade{grade} \opn\height{height}
\opn\embdim{emb\,dim} \opn\codim{codim}
\opn\Tr{Tr} \opn\bigrank{big\,rank}
\opn\superheight{superheight}\opn\lcm{lcm}
\opn\trdeg{tr\,deg}
	\opn\reg{reg} \opn\lreg{lreg} \opn\ini{in} \opn\lpd{lpd}
	\opn\size{size} \opn\sdepth{sdepth}
	\opn\link{link}\opn\fdepth{fdepth}\opn\lex{lex}\opn\dist{dist}
	\opn\div{div} \opn\Div{Div} \opn\cl{cl} \opn\Cl{Cl}
	\opn\Spec{Spec} \opn\Supp{Supp} \opn\supp{supp} \opn\Sing{Sing}
	\opn\Ass{Ass} \opn\Min{Min}\opn\Mon{Mon}
	\opn\Ann{Ann} \opn\Rad{Rad} \opn\Soc{Soc}
	\opn\Im{Im} \opn\Ker{Ker} \opn\Coker{Coker} \opn\Am{Am}
	\opn\Hom{Hom} \opn\Tor{Tor} \opn\Ext{Ext} \opn\End{End}
	\opn\Aut{Aut} \opn\id{id}
	\opn\nat{nat}
	\opn\pff{pf}
	\opn\Pf{Pf} \opn\GL{GL} \opn\SL{SL} \opn\mod{mod} \opn\ord{ord}
	\opn\Gin{Gin} \opn\Hilb{Hilb}\opn\sort{sort}
	\opn\aff{aff} \opn
\opn\relint{relint} \opn\st{st}
	\opn\lk{lk} \opn\cn{cn} \opn\core{core} \opn\vol{vol}
	\opn\link{link} \opn\star{star}\opn\lex{lex}\opn\set{set}
	\opn\gr{gr}
	\def\pot#1#2{#1[\kern-0.28ex[#2]\kern-0.28ex]}
	\opn\dirlim{\underrightarrow{\lim}}
	\opn\inivlim{\underleftarrow{\lim}}
	\def\Implies{\ifmmode\Longrightarrow \else
		\unskip${}\Longrightarrow{}$\ignorespaces\fi}
	\def\implies{\ifmmode\Rightarrow \else
		\unskip${}\Rightarrow{}$\ignorespaces\fi}
	\def\iff{\ifmmode\Longleftrightarrow \else
		\unskip${}\Longleftrightarrow{}$\ignorespaces\fi}
	\newtheorem{Theorem}{Theorem}[section]
	\newtheorem{Lemma}[Theorem]{Lemma}
	\newtheorem{Corollary}[Theorem]{Corollary}
	\newtheorem{Proposition}[Theorem]{Proposition}
	\newtheorem{Remark}[Theorem]{Remark}
	\newtheorem{Example}[Theorem]{Example}
	\newtheorem{Conjecture}[Theorem]{Conjecture}
	\newtheorem{Question}[Theorem]{Question}
    	\newtheorem{Notation}[Theorem]{Notation}
	\let\epsilon\varepsilon
	\let\kappa=\varkappa
	\def\qed{\ifhmode\textqed\fi
		\ifmmode\ifinner\quad\qedsymbol\else\dispqed\fi\fi}
	\def\textqed{\unskip\nobreak\penalty50
		\hskip2em\hbox{}\nobreak\hfil\qedsymbol
		\parfillskip=0pt \finalhyphendemerits=0}
	\def\dispqed{\rlap{\qquad\qedsymbol}}
	\opn\dis{dis}
	\def\pnt{{\raise0.5mm\hbox{\large\bf.}}}
	\opn\Lex{Lex}
\begin{document}	 
        \title[] {On squarefree powers of simplicial trees}
	
        \author {Elshani Kamberi}
        \address{Sabanci University, Faculty of Engineering and Natural Sciences, Orta Mahalle, Tuzla 34956, Istanbul, Turkey}
	\email{elshani@sabanciuniv.edu}

        \author{Francesco Navarra}
	\address{Sabanci University, Faculty of Engineering and Natural Sciences, Orta Mahalle, Tuzla 34956, Istanbul, Turkey}
	\email{francesco.navarra@sabanciuniv.edu}
        \author{Ayesha Asloob Qureshi}
        \address{Sabanci University, Faculty of Engineering and Natural Sciences, Orta Mahalle, Tuzla 34956, Istanbul, Turkey}
	\email{aqureshi@sabanciuniv.edu, ayesha.asloob@sabanciuniv.edu}
		
	\keywords{Squarefree power, simplicial forest, linear resolution, linearly related, Castelnuovo-Mumford regularity, $t$-path ideals.}
	
	\subjclass[2010]{13D02, 05E40, 05E45, 05C70}
		\thanks{Authors are supported by Scientific and Technological Research Council of Turkey T\"UB\.{I}TAK under the Grant No: 122F128, and are thankful to T\"UB\.{I}TAK for their supports. The last author also acknowledge support from the ICTP through the Associates Programme (2019-2024).}  
        \maketitle
            \begin{abstract}    
In this article, we study the squarefree powers of facet ideals associated with simplicial trees. Specifically, we examine the linearity of their minimal free resolution and their regularity. Additionally, we investigate when the first syzygy module of squarefree powers of facet ideal of a simplicial tree is generated by linear relations. Finally, we provide a combinatorial formula for the regularity of the squarefree powers of $t$-path ideals of path graphs.
 \end{abstract}
	
     \section*{Introduction}
    In the last two decades, the study of the regularity of powers of squarefree monomial ideals has become a notable trend in combinatorial commutative algebra. This line of research took one of its initial steps with a beautiful theorem (see \cite{CHT, K}), which states that if $I$ is a homogeneous ideal in a polynomial ring, then the function $\reg(I^k)$ is asymptotically linear. This result gathered the interest of many algebraists to investigate the regularity and, more generally, the minimal graded free resolution of the powers of homogeneous ideals, leading to the publication of many interesting papers on this topic. However, obtaining a complete understanding of the asymptotic linearity of the regularity function for arbitrary homogeneous ideals seems almost impossible, and this problem remains open even for squarefree monomial ideals. 
    
    In this context, the study of the regularity of the squarefree powers of squarefree monomial ideals assumes an important role. Let $I$ be a squarefree monomial ideal. The $k$-th squarefree power $I^{[k]}$ of  $I$ is the ideal generated by the squarefree generators of $I^k$. The squarefree powers of $I$ provide important information on the ordinary powers of $I$. It is known from \cite[Lemma 4.4]{HHZ} that the multigraded minimal free resolution $I^{[k]}$ is a subcomplex of the multigraded minimal free resolution of $I^k$. Consequently, $\reg I^{[k]} \leq \reg I^k$, and if $I^{[k]}$ does not have a linear resolution, then $I^k$ also does not have a linear resolution. Another interesting aspect of the study of squarefree powers comes from their deep link with the matching theory of simplicial complexes (or hypergraphs). Recall that $I$ can be viewed both as an edge ideal of a hypergraph and as a facet ideal of a simplicial complex. In this work, we will adopt the latter terminology. Let $\Delta$ be a simplicial complex and $I(\Delta)$ be the facet ideal of $\Delta$.
     A matching of $\Delta$ is a set of pairwise disjoint facets of $\Delta$. Indeed, the generators of $I(\Delta)^{[k]}$ correspond to the matching of $\Delta$ of size $k$. This means that $I(\Delta)^{[k]}\neq 0$ only when $1\leq k\leq \nu(\Delta)$, where $\nu(\Delta)$ is the maximum size of a matching of $\Delta$. The study of squarefree powers began with \cite{BHZ} for facet ideals of 1-dimensional simplicial complexes, or simply, the edge ideals of graphs. Since then, several papers, including \cite{CFL, EF, EHHM, EHHM2, EH, FHH, S}, have been published on this topic, focusing on the squarefree powers of edge ideals of different classes of graphs.

Inspired by this, in this article we study the homological properties of the squarefree powers of squarefree monomial ideals that are not necessarily quadratic. To this end, we begin our investigation with the squarefree monomial ideals attached to simplicial trees. In \cite{F1}, Faridi introduced simplicial trees as a natural generalization of trees in the context of graphs. In the language of hypergraphs, simplicial trees correspond to totally balanced hypergraphs, \cite[Theorem 3.2]{HHTZ}, that is the hypergraphs without any ``special" cycles. We direct reader to \cite[Chapter 5]{B}, for more details.

  A breakdown of the contents of this paper is as follows: in Section~\ref{Section one: Preliminaries}, we recall necessary definitions and notions related to simplicial complexes and establish some preliminary results concerning simplicial trees. Section~\ref{Section: Simplicial forests with linear squarefree powers} is devoted to studying the linearity of the resolutions of the squarefree powers of the facet ideals of simplicial trees. In \cite[Theorem 3.17]{Z}, Zheng showed that a facet ideal of a simplicial tree $\Delta$ has a linear resolution if and only if $\Delta$ satisfies the so-called intersection property. More recently, the authors of \cite{KK1} proved that if the facet ideal $I(\Delta)$ of a simplicial tree has a linear resolution, then $I(\Delta)^k$ has a linear resolution for all $k \geq 1$. A natural question in this context is whether the squarefree powers of $I(\Delta)$ also admit the same property, that is, if $I(\Delta)$ has a linear resolution, then does $I(\Delta)^{[k]}$ also have a linear resolution? However, in Proposition~\ref{lem:matchingnumber} we prove that if $I(\Delta)$ has a linear resolution, equivalently if $\Delta$ has the intersection property, then $\nu(\Delta) \leq 2$. This implies that $I(\Delta)^{[k]} = 0$ for all $k > 2$, meaning the only squarefree power to be considered is $I(\Delta)^{[2]}$. The linearity of the resolution of $I(\Delta)^{[2]}$ follows from \cite[Proposition 2.10]{JZ} together with \cite[Lemma 2.3]{KK1}, as discussed in Theorem~\ref{thm: simplicial tree with i.p. has linear quotients}.

\medskip

   The next question that we tackle in Section~\ref{Section: Simplicial forests with linear squarefree powers} is motivated by \cite[Theorem 5.1]{BHZ}, in which the authors show that the highest non-vanishing power of an edge ideal $I(G)$ of any graph $G$ admits a linear resolution. Theorem~\ref{thm: simplicial tree with i.p. has linear quotients} also implies that the highest non-vanishing squarefree power of the facet ideal of a simplicial tree with the intersection property has a linear resolution. However, this is not true in general for simplicial trees, as observed in Example~\ref{exa: 3-path of a rooted tree with no linear resolution} for a suitable $3$-path ideal of a rooted tree graph. The $t$-path ideals of graphs were introduced in \cite{CDN} as ideals generated by monomials that correspond to paths of length $t-1$ of a graph $G$. If $G$ is a directed graph, then one may consider the $t$-path ideal of $G$ as an ideal generated by monomials corresponding to the directed paths of length $t-1$. For $t=2$, the $t$-path ideal coincides with the edge ideal of $G$. In \cite{JV}, the authors proved that the $t$-path ideal of a rooted tree (a special class of directed trees) is the facet ideal of a simplicial tree, providing a rich class of simplicial trees. The $t$-path ideal of a rooted tree have been studied by many authors, for example see \cite{AF, AF2, BHaK, KK1}.  Given a rooted tree $\Gamma$, we denote the simplicial complex whose facets are $t$-paths of $\Gamma$ as $\Gamma_t$. As observed in Example~\ref{exa: 3-path of a rooted tree with no linear resolution}, the highest non-vanishing power of $I(\Gamma_t)$ need not have a linear resolution. This motivated us to state Theorem~\ref{thm:broomgraph-linearquotient}, where we show that the highest non-vanishing squarefree power of $I(\Gamma_t)$ has a linear resolution if $\Gamma_t$ is a broom graph.

  Section~\ref{Section: linearly related property} is devoted to understanding when the first syzygy module of the squarefree powers of the facet ideals of simplicial forests is generated by linear forms. It turns out that the restricted matching of a simplicial complex $\Delta$ plays an important role in this context. The restricted matching of a graph (1-dimensional simplicial complex) was introduced in \cite{BHZ}, and we extend this definition for any $d$-dimensional simplicial complex; see Section~\ref{Section one: Preliminaries} for the formal definition. We prove in Theorem~\ref{thm: k-squarefree power non-linearly related} that if $\Delta$ is a simplicial forest, then the squarefree powers of $I(\Delta)$ are not linearly related up to the restricted matching number of $\Delta$. It is shown in \cite[Theorem 3.1]{EHHM} that given a graph $G$, if $I(G)^{[k]}$ is linearly related, then $I(G)^{[k+1]}$ is linearly related. We prove an analogue of this result in the case when $\Delta$ is a pure simplicial tree. This also shows that if the highest squarefree power of $I(\Delta)$ is not linearly related, then none of the non-vanishing squarefree powers have a linear resolution. We further investigate the first syzygy modules for a special class of simplicial trees, laying the groundwork for discussing regularity in the next section. Utilizing a celebrated result by Gasharov, Peeva, and Welker \cite{GPW}, we establish the following theorem:
  \medskip
  
   {\bf Theorem} (see Theorem~\ref{theorem: betti number t-path ideal})	{\it Let $\Gamma_{n,t}$ be the $t$-path simplicial tree of a path graph $P_n$ and $I_{n,t}= I(\Gamma_{n,t})$. Then $\beta_{1,p}\left(I_{n,t}^{[k]}\right) = 0$ if $p \notin \{ kt+1,(k+1)t\}$.}
\medskip

   Recently, in \cite[Section 4]{KK1}, the authors discussed the regularity of certain ordinary powers of $t$-path ideals of broom graphs and proposed a conjecture on the upper bound of the regularity of facet ideals of simplicial trees. It is noteworthy that path graphs constitute a special subclass of broom graphs. Motivated by their findings, in Section~\ref{Section: Regularity of t-path ideals of path graph}, we concentrate on the $t$-path ideals of path graphs. In Theorem~\ref{thm: equality regularity t-path ideals of path}, we present a combinatorial formula for their regularity.
 To achieve this, we utilize well-known exact sequences for homogeneous ideals (see Theorem~\ref{thm: upper bound}) and employ both combinatorial and topological techniques (see Theorem~\ref{thm: lower bound}). The main result of this section is summarized in the following theorem.

    \medskip
  
   {\bf Theorem} (see Theorem~\ref{thm: equality regularity t-path ideals of path})
    {\it Let $\Gamma_{n,t}$ be the $t$-path simplicial tree of a path graph $P_n$ and $I_{n,t}= I(\Gamma_{n,t})$. Then for any $1\leq k+1 \leq \nu(\Gamma_{n,t})$, we  have
		\[
		\reg\left( \frac{R}{I_{n,t}^{[k+1]}}\right)= kt+(t-1)\nu_1 (\Gamma_{n-kt,t})=kt+\reg  \left( \frac{R}{I_{n-kt,t}}\right).
		\]
  }
\medskip

 \noindent    where $\nu_1 (\Gamma_{n-kt,t})$ denotes the induced matching number of $\Gamma_{n-kt,t}$. 
 
We conclude this article with some open questions in Section~\ref{Section: Towards future works} and a conjecture on the bounds of the regularity of squarefree powers of $I(\Delta)$, where $\Delta$ is a simplicial tree.\\

{\bf Acknowledgements:} We sincerely thank the referee for their valuable suggestions, which have helped improve the presentation of this paper. We also thank Kanoy Kumar Das, Amit Roy, and Kamalesh Saha for carefully reading the draft, especially proof of Theorem 4.7, and providing helpful feedback.

\section{Squarefree powers and simplicial trees}\label{Section one: Preliminaries}

We first recall some basic concepts related to squarefree monomial ideals and simplicial complexes. The notation and definitions given in this section will be used throughout the later sections.

A \textit{simplicial complex} $\Delta$ on vertex set $V(\Delta)$ is a non-empty collection of subsets of $V(\Delta)$ such that  if $F'\in \Delta$ and $F \subseteq F'$, then $F \in \Delta$. Given a collection $\mathrm{F}=\{F_1,\dots,F_m\}$ of subsets of $V(\Delta)$, we denote by $\langle F_1,\dots,F_m\rangle$ or briefly $\langle \mathrm{F}\rangle$, the simplicial complex consisting of all subsets of $V(\Delta)$ which are contained in $F_i$ for some $i=1,\dots,m$. The elements of $\Delta$ are called \textit{faces} of $\Delta$. For any $F \in \Delta$, the dimension of $F$, denoted by $\dim(F)$  is one less than the cardinality of $F$. An \textit{edge} of $\Delta$ is a face of dimension $1$, while a \textit{vertex} of $\Delta$ is a face of dimension $0$. The dimension of $\Delta$ is given by $\max\{\dim(F):F\in \Delta\}$. The maximal faces of $\Delta$ with respect to the set inclusion are called \textit{facets}. We denote the set of all facets of $\Delta$ by $\F(\Delta)$. A {\em subcollection $\Delta'$} of $\Delta$ is a simplicial complex such that $\F(\Delta') \subseteq \F(\Delta)$. A subcollection $\Delta'$ of $\Delta$ is said to be {\em induced} if each facet $F \in \F(\Delta)$ with $F \subseteq V(\Delta')$ belongs to  $\Delta'$. A simplicial complex $\Delta$ is called \textit{pure} if all facets of $\Delta$ have the same dimension.	For a pure simplicial complex $\Delta$, the dimension of $\Delta$ is given trivially by the dimension of a facet of $\Delta$. 

Let $S=K[x_1,\dots,x_n]$ be a polynomial ring in $n$ variables over a field $K$. Let $I$ be a monomial ideal of $S$. We denote the unique set of minimal monomial generators of $I$ with $G(I)$. Given any $\{i_1,\dots,i_r\}\subset [n]=\{1, \ldots, n\}$,  we associate a squarefree monomial $u=x_{i_1}\dots x_{i_r} \in S$, and the set $\{i_1,\dots,i_r\}$ is called {\em support} of $u$, denoted by $\supp(u)$. Let  $\Delta$ be a simplicial complex on vertex set $[n]$. The monomial ideal generated by all squarefree monomials $x_{i_1}\dots x_{i_r}$ such that $ \{i_1,\dots,i_r\} \in \F(\Delta)$ is called the {\em facet ideal} of $\Delta$ and it is denoted by $I(\Delta)$. 

We highlight the following definitions and notation that describe different matching in $\Delta$. 

 \begin{enumerate}
     \item A {\em matching} of  $\Delta$ is a set of pairwise disjoint facets of $\Delta$. A matching consisting of $k$ facets is referred to as a \textit{$k$-matching}. 
     A $k$-matching  is called {\em maximal}, if $\Delta$ does not admit any $(k+1)$-matching. The {\em matching number} of $\Delta$ is the size of a maximal matching of $\Delta$ and is denoted by $\nu(\Delta)$.
     \item A matching $M$ of $\Delta$ is called {\em induced matching} if the set of facets of the induced subcollection on $\cup_{E \in M} E$ is $\langle M \rangle$. The {\em induced matching number }of $\Delta$ is the maximum size of an induced matching of $\Delta$ and denoted by $\nu_1(\Delta)$.
     \item In \cite{BHZ}, authors introduced the definition of a restricted matching for $1$-dimensional simplicial complexes (or simply a graph). We extend this definition to simplicial complexes of any given dimension in the following way. Let $F, G \in \mathcal{F}(\Delta)$. Then $F$ and $G$ form a {\em gap} in $\Delta$ if $F \cap G= \emptyset$ and the induced subcollection on vertex set $F\cup G  $ is $\langle F,G\rangle$. A matching $M$ of $\Delta$ is called a {\em restricted matching} if there exists a facet in $M$ forming  a gap with every other facet in $M$. The maximal size of a restricted matching of $\Delta$ is denoted by $\nu_0(\Delta)$ and it is called \textit{restricted matching number} of $\Delta$.
 \end{enumerate}
 
It immediately follows from the above definitions that $\nu_1(\Delta)\leq \nu_0(\Delta) \leq \nu(\Delta)$. We illustrate above definitions with following example. 

\begin{Example}\em \label{ex:restricted matching}
\begin{enumerate}
\item Consider the simplicial complex $\Delta$, whose $\mathcal{F}(\Delta)$ consists of facets $F_i=\{i, i+1, i+2\}$ for all $1\leq i\leq 10$. In Figure \ref{fig:simplicial complex}, we illustrate the geometric realization of $\Delta$. Then, $M=\{F_1, F_4, F_7, F_{10}\}$ is the maximal matching of $\Delta$, and hence $\nu(\Delta)= 4$. However, $M$ is not a restricted matching of $\Delta$. To see this, note that $F_1$ and $F_4$ do not form a gap because $F_2 \in \langle F_1 ,F_4\rangle$. Similarly, $F_7$ and $F_{10}$ do not form a gap because $F_8 \in \langle F_7 ,F_{10}\rangle$. Therefore, there does not exist a facet in $M$ that forms a gap with the other facets of $M$. 

However, $M'=\{F_1, F_5, F_8\}$ is a restricted matching of $\Delta$ because $F_1$ forms a gap with $F_5$ and $F_8$. This gives $\nu_0(\Delta)=3$. Furthermore, we observe that $M'$ is not an induced matching of $\Delta$ because $F_5$ and $F_8$ do not form a gap. Let $M''=\{F_1, F_5, F_9\}$; this is an induced matching of $\Delta$, and therefore $\nu_1(\Delta)=3$.

 \begin{figure}[h]
    \centering
\includegraphics[scale=0.7]{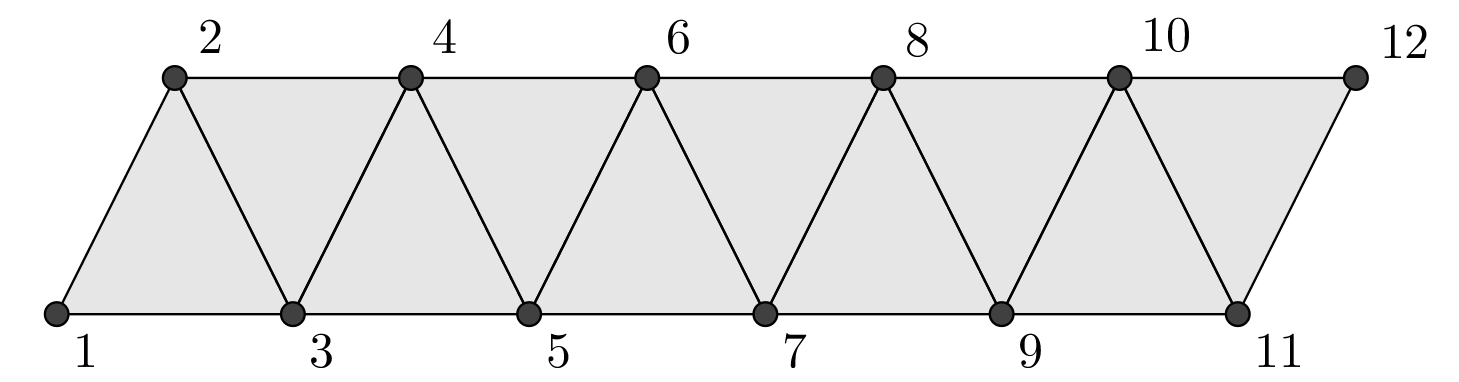}
  \caption{Geometric realization of $\Delta$.}
  \label{fig:simplicial complex}
\end{figure}

    \item Consider the simplicial complex $\Delta$ whose facets are all subsets of $[n]$ of size $k$ for some $1\leq k\leq n$, that is, $\mathcal{F}(\Delta)= \{ \{i_1, i_2, \ldots, i_k\}: 1\leq i_1< \ldots < i_k\leq n \}$. Then, $\nu_0(\Delta)=\nu_1(\Delta)=1$ and $\nu(\Delta)=\lfloor \frac{n}{k} \rfloor$. This example shows that, given a simplicial complex, the difference between its matching number and restricted matching number can be arbitrarily large.     
\end{enumerate}
\end{Example}

In \cite{F1}, Faridi introduced the class of the simplicial forests, which will be deeply discussed along this work. A facet $F$ of $\Delta$ is called a \textit{leaf} if either $F$ is the only facet of $\Delta$ or there exists a facet $G \in\Delta$ such that $H \cap F \subset G \cap F$ for all facets $H\neq F$ of $\Delta$. Such a facet $G$ is called {\em joint} of $F$ in $\Delta$. Observe that every leaf $F$ of $\Delta$ contains a \textit{free vertex}, that is a vertex $v$ of $\Delta$ such that $v \notin F'$ for every facet $F' \neq F$ of $\Delta$. A connected simplicial complex $\Delta$ is a \textit{tree} if every nonempty subcollection of $\Delta$ has a leaf. A simplicial forest is a simplicial complex whose every connected component is a tree. It directly follows from the definition of a simplicial tree that if $\Delta$ is a simplicial tree, then any subcollection of $\Delta$ is also a tree. For example, the simplical complex whose geometric realization is given in Figure~\ref{fig:simplicial complex}, is a simplicial tree. From \cite[Theorem 3.2]{HHTZ} we know that simple totally balanced hypergraphs are same as simplicial forests. A well known example of a simplicial forest is due to \cite[Corollary 2.9]{JV}, which we describe below.

A {\em rooted tree} $\Gamma$ is a directed tree with a fixed vertex, called the \textit{root}, which is directed implicitly away from the root. A directed path of length $t-1$ is a sequence of $t$ distinct vertices $i_1, \ldots ,i_t$, such that each pair $(i_j,i_{j+1})$ forms a directed edge from $i_j$ to $i_{j+1}$ for all $j=1, \ldots, t-1$.
 We denote by $\Gamma_t$ the simplicial complex whose facets correspond to the directed $t$-paths of $\Gamma$. In \cite[Theorem 2.7]{JV} it is shown that $\Gamma_t$ is a simplicial tree. Due to this, we refer to $\Gamma_t$ as {\em $t$-path simplicial tree of $\Gamma$}. The facet ideal of $\Gamma_t$ is called the \emph{$t$-path ideal} of $\Gamma$, and is given by 
\[
I (\Gamma_t)=(x_{i_1}\cdots x_{i_t}: i_1,\ldots ,i_t\text{ is a directed path on $t$ vertices in $\Gamma$}).
\]
 Now we recall briefly some definitions in graph theory, which will be useful in the next lemmas. Let $G$ be a simple graph with vertex set $V(G)$ and edge set $E(G)$. Two vertices $u,v \in V(G)$ are \textit{adjacent} if $\{u,v\} \in E(G)$. The \textit{degree} of a vertex $v$ of $G$, denoted by $\deg(v)$, is the number of the vertices adjacent to $v$ in $G$. A subgraph $H \subset G$ is a graph with $V(H)\subset V(G)$ and $E(H)\subset E(G)$. Moreover, $H$ is said to be an \textit{induced subgraph} on $V(H)$ if for any $u,v \in V(H)$, we have $\{u,v\} \in E(H)$ if and only if $\{u,v\} \in E(G)$.

A \textit{path} of length $r-1$ in $G$ is a sequence of $r$ distinct vertices $v_1,\dots,v_r$ such that $\{v_i,v_{i+1}\}$ is an edge of $G$, for all $i=1,\dots,r-1$. A \textit{cycle} of length $r$ is a sequence $v_{1},\dots,v_{r}, v_1$ such that $v_1, \ldots, v_r$ are distinct vertices of $G$, and $\{v_1,v_{r}\}$ and $\{v_{i},v_{i+1}\}$ are edges of $G$, for all $i=1,\dots,r-1$. A graph on $n$ vertices, forming a path of length $n-1$ is called a {\em path graph}, and is denoted by $P_n$. Let $v$ be a vertex of $P_n$ with degree one. Then $P_n$ can be viewed as a rooted tree with root $v$. The simplicial complex given in Example \ref{ex:restricted matching} (1) is a $3$-path simplicial tree of $P_{12}$ with edges $\{i, i+1\}$ for $i=1, \ldots, 11$.

Our aim is to study the squarefree powers of facet ideals of simplicial forests. Given a squarefree monomial ideal $I$ in $K[x_1, \ldots, x_n]$, the $k$-th squarefree power of $I$ is defined to be the ideal generated by squarefree elements of $G(I^k)$ and it is denoted by $I^{[k]}$. When $I$ is regarded as the facet ideal of a simplicial complex $\Delta$, the monomial generators of $I^{[k]}$ correspond to the $k$-matching of $\Delta$, that is, 
\[
I^{[k]}= (x_{i_1}\cdots x_{i_k} : \{i_1, \ldots, i_k\} \text{ is a $k$-matching of }\Delta).
\]
It is straightforward to check that $I^{[k]}\neq 0$ if and only if $1\leq k\leq \nu(\Delta)$. In particular, $I^{[\nu(\Delta)]}$ is the highest non-vanishing squarefree power of $I$.

Now, we give some preliminary results related to the matching of simplicial forests that will be used in subsequent sections.

\begin{Proposition}\label{M=N}
	Let $\Delta$ be a simplicial forest and $M=\{F_1, \ldots, F_r\}$ and $N=\{G_1, \ldots, G_r\}$ be two $r$-matchings of $\Delta$ with $\cup_{i=1}^rF_i= \cup_{i=1}^rG_i$. Then $M=N$. 
\end{Proposition}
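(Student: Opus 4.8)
The plan is to induct on $r$, the common size of the two matchings. When $r=1$ the hypothesis reads $F_1 = \bigcup_{i=1}^{1} F_i = \bigcup_{i=1}^{1} G_i = G_1$, so $M=N$ and there is nothing to prove. For the inductive step, set $W := \bigcup_{i=1}^{r} F_i = \bigcup_{i=1}^{r} G_i$ and let $\Delta'$ be the induced subcomplex of $\Delta$ on the vertex set $W$. By the definition of an induced subcomplex, the facets of $\Delta'$ are exactly the facets of $\Delta$ contained in $W$; in particular every $F_i$ and every $G_j$ is a facet of $\Delta'$. Since any subcomplex of a simplicial tree is again a tree (and hence any subcomplex of a forest is a forest), $\Delta'$ is a simplicial forest and therefore possesses a leaf $L$, which in turn contains a free vertex $v$; that is, $v \in L$ and $v \notin F'$ for every facet $F' \neq L$ of $\Delta'$.

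The key step is then to locate $L$ inside both matchings via this free vertex. Because $v \in W = \bigcup_i F_i$, there is an index $i_0$ with $v \in F_{i_0}$. As $F_{i_0}$ is a facet of $\Delta'$ containing $v$, and $L$ is the only facet of $\Delta'$ that contains its own free vertex $v$, we must have $F_{i_0} = L$. Running the identical argument on $W = \bigcup_j G_j$ yields an index $j_0$ with $G_{j_0} = L$. Thus $L = F_{i_0} = G_{j_0}$ is a facet common to both $M$ and $N$.

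Finally I would delete $L$ and appeal to the inductive hypothesis. Here the matching property is essential: since $F_1,\dots,F_r$ are pairwise disjoint, $\bigcup_{k \neq i_0} F_k = W \setminus L$, and since $G_1,\dots,G_r$ are pairwise disjoint, $\bigcup_{l \neq j_0} G_l = W \setminus L$ as well. Hence $M \setminus \{L\}$ and $N \setminus \{L\}$ are two $(r-1)$-matchings of $\Delta$ with the same vertex union $W \setminus L$, so by induction they coincide; adjoining $L$ gives $M = N$.

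I expect the only genuine obstacle to be the middle step, namely justifying that each matching facet $F_i$ (respectively $G_j$) is itself a facet of the induced subcomplex $\Delta'$, so that the defining property of the free vertex $v$ of the leaf $L$ may legitimately be applied to it. This is precisely where the definition of the induced subcomplex and the inheritance of the forest property (guaranteeing a leaf with a free vertex) are needed; the opening base case and the closing reduction are routine bookkeeping, relying only on the disjointness built into the notion of a matching.
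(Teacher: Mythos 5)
Your proof is correct, and it takes a genuinely different route from the paper's. The paper argues by contradiction: assuming $M\neq N$ (and, after discarding common facets, $M\cap N=\emptyset$), it forms the bipartite intersection graph on $M\cup N$, shows every vertex there has degree at least two, extracts a cycle, and then checks that the subcomplex of $\Delta$ spanned by the facets on that cycle has no leaf, contradicting the tree property. You instead induct on $r$ and use the defining property of a forest positively rather than negatively: the subcomplex supported on the common vertex set $W$ has a leaf $L$, the free vertex $v$ of $L$ must be covered by some $F_{i_0}$ and some $G_{j_0}$ (both facets of that subcomplex), and freeness forces $F_{i_0}=L=G_{j_0}$; pairwise disjointness then lets you delete $L$ cleanly and close the induction. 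Your key step is sound exactly as you flag it: each $F_i$ and $G_j$ is a facet of $\Delta$ contained in $W$, hence a facet of the induced subcomplex, so the free-vertex property applies to it. (You could equally well take the smaller subcomplex $\langle F_1,\dots,F_r,G_1,\dots,G_r\rangle$ in place of the full induced subcomplex; the argument is unchanged.) Your version is arguably shorter and avoids the case-by-case verification that no facet on the cycle is a leaf; the paper's intersection-graph-plus-cycle technique, on the other hand, is reused almost verbatim in its Lemma~\ref{lem:int} and Theorem~\ref{thm: k-squarefree power non-linearly related}, where the two families being compared no longer have equal unions and a direct leaf-peeling induction is less readily available.
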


\begin{proof}
	Assume that $M\neq N$.  Without loss of generality, we may assume that $M \cap N = \emptyset$. Consider the bipartite graph $G$ on the vertex set $\{F_1, \ldots, F_r\} \cup \{G_1, \ldots, G_r\}$ such that $\{F_i, G_j\} \in E(G)$ if and only if $F_i \cap G_j \neq \emptyset$. The condition  $\cup_{i=1}^rF_i= \cup_{i=1}^rG_i$ enforces that the degree of each vertex in $G$ is at least two. For instance, let $\{F_i, G_j\} \in E(G)$. Since $F_i \neq G_j$ and $\cup_{i=1}^tF_i= \cup_{i=1}^tG_i$, there exists some $F_k \in M$ with $i\neq k$ such that $(G_j \setminus F_i )\cap F_k \neq \emptyset$. Hence $\deg(G_j)\geq 2$.
 
	The fact that every vertex of $G$ has degree at least two shows that $G$ is not a tree and therefore contains a cycle $C$ of length $t\geq 4$.  After rearranging the indices, let $C$ be given by: $F_1, G_1, F_2, \ldots, F_t, G_t, F_1$. Consider the subcollection  $\Delta'=\langle F_1, \ldots, F_t, G_1, \ldots, G_t\rangle$ of $\Delta$. Then $\Delta'$ is also a simplicial forest with a leaf, say $G_1$. Since $M$ is a matching, we observe that $G_1 \cap F_i$ is disjoint with $G_1 \cap F_j$ for all $i \neq j$. Moreover, the sets  $G_1 \cap F_1$ and $G_1 \cap F_2$ are non-empty by the definition of $G$. This shows that $G_1$ is not a leaf of $\Delta'$, and $\Delta'$ is a not a simplicial tree, which is a contradiction to $\Delta$ being a simplicial tree. 
\end{proof}

We obtain following description of elements of $ G(I(\Delta)^{[k]})$ as an immediate corollary of above proposition. 

\begin{Corollary}\label{cor:M=N1}
	Let $\Delta$ be a simplicial forest with  $\F(\Delta)=\{F_1, \ldots, F_r\}$ and for all $i=1, \ldots, r$, set $f_i=\prod_{j \in F_i}x_j$. For any $1 \leq k \leq \nu(\Delta)$, each $ u \in G(I(\Delta)^{[k]})$ can be uniquely expressed as $u=f_{i_1}\cdots f_{i_k}$ where $M=\{F_{i_1}, \ldots, F_{i_k}\}$ is a $k$-matching of $\Delta$. 
\end{Corollary}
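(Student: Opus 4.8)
The plan is to read off both existence and uniqueness of the claimed factorization directly from the combinatorial description of the generators of $I(\Delta)^{[k]}$ together with Proposition~\ref{M=N}.

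First I would recall that, as noted just before the statement, the minimal monomial generators of $I(\Delta)^{[k]}$ are exactly the squarefree monomials $x_{i_1}\cdots x_{i_k}$ whose support is a $k$-matching of $\Delta$. Thus any $u \in G(I(\Delta)^{[k]})$ is, by definition, of the form $u = f_{i_1}\cdots f_{i_k}$ where $M=\{F_{i_1}, \ldots, F_{i_k}\}$ is a $k$-matching of $\Delta$; this settles existence. The key auxiliary observation is that, because the facets $F_{i_1}, \ldots, F_{i_k}$ are pairwise disjoint, the monomials $f_{i_1}, \ldots, f_{i_k}$ have pairwise disjoint supports, so their product is squarefree and $\supp(u) = F_{i_1} \cup \cdots \cup F_{i_k}$ with no collision or cancellation of variables.

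For uniqueness, I would suppose $u = f_{i_1}\cdots f_{i_k} = f_{j_1}\cdots f_{j_k}$, where $M=\{F_{i_1}, \ldots, F_{i_k}\}$ and $N=\{F_{j_1}, \ldots, F_{j_k}\}$ are two $k$-matchings of $\Delta$. Passing to supports and using the observation above gives
\[
\bigcup_{\ell=1}^{k} F_{i_\ell} = \supp(u) = \bigcup_{\ell=1}^{k} F_{j_\ell}.
\]
Since $\Delta$ is a simplicial forest and $M, N$ are $k$-matchings with the same union of facets, Proposition~\ref{M=N} forces $M = N$, which is precisely the asserted uniqueness.

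There is essentially no obstacle here: the entire content of the corollary is carried by Proposition~\ref{M=N}, and the only points to verify carefully are that passing from $u$ to $\supp(u)$ recovers the union of the matching (which uses disjointness of the facets in a matching) and that distinct facets yield distinct monomials $f_i$ — immediate since $F_i \mapsto f_i$ is injective on subsets of $[n]$ — so that a $k$-matching and its associated squarefree monomial determine one another.
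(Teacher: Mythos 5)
Your argument is correct and is exactly the route the paper intends: the paper states this as an immediate consequence of Proposition~\ref{M=N} without writing out details, and your proof supplies precisely those details (existence from the matching description of $G(I(\Delta)^{[k]})$, uniqueness by passing to supports and invoking Proposition~\ref{M=N}). Nothing is missing.
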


As noted in Example~\ref{ex:restricted matching}(2) that for an arbitrary simplicial complex, the difference between its matching number and restricted matching number can be arbitrarily large. It is argued in \cite[page 24]{BHZ} that if a simple graph $G$ is a tree then $\nu(G) - \nu_0(G) \leq 1$. Below, we extend this result to any simplicial forest. 

\begin{Proposition}\label{prop:difference of matchings}
	Let $\Delta$ be a simplicial forest. Then $\nu(\Delta) - \nu_0(\Delta) \leq 1$. 
\end{Proposition}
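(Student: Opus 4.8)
The plan is to prove the equivalent inequality $\nu_0(\Delta)\ge\nu(\Delta)-1$. I would fix a maximum matching $M=\{F_1,\dots,F_r\}$ with $r=\nu(\Delta)$ and record that any two of the $F_i$ are disjoint, so the gap relation is defined on every pair. The obstructions to forming gaps are then encoded in an auxiliary graph $\mathcal{G}$ on the vertex set $\{F_1,\dots,F_r\}$, where $F_i$ and $F_j$ are joined by an edge exactly when they do \emph{not} form a gap in $\Delta$; by definition this happens iff there is a facet $H\in\F(\Delta)$ with $H\subseteq F_i\cup F_j$ and $H\cap F_i\neq\emptyset\neq H\cap F_j$, which I call a \emph{bridge} of $\{F_i,F_j\}$. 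If I can show that $\mathcal{G}$ is a forest, then it has a vertex $F_{i_0}$ of degree at most one, so $F_{i_0}$ forms a gap with every $F_j$ except possibly one facet $F_{j_0}$. Then $\{F_{i_0}\}\cup\bigl(M\setminus\{F_{i_0},F_{j_0}\}\bigr)$ is a matching of size $r-1$ in which $F_{i_0}$ forms a gap with every other member, i.e. a restricted matching, whence $\nu_0(\Delta)\ge r-1$ and the claim follows.

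The heart of the argument is therefore the acyclicity of $\mathcal{G}$, which I would establish by contradiction. Choosing a shortest cycle $F_1,F_2,\dots,F_t,F_1$ in $\mathcal{G}$ (after relabelling), with $t\ge 3$, minimality guarantees that non-consecutive $F_i,F_j$ still form gaps; for each consecutive pair I pick a bridge $H_i\in\F(\Delta)$ with $H_i\subseteq F_i\cup F_{i+1}$ meeting both (indices mod $t$). Since every $H_i$ and every $F_i$ is a facet of $\Delta$, the subcomplex $\Sigma=\langle F_1,\dots,F_t,H_1,\dots,H_t\rangle$ satisfies $\F(\Sigma)\subseteq\F(\Delta)$, so $\Sigma$ is again a simplicial forest and hence has a leaf. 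The key observation is that \emph{no bridge can be a leaf of $\Sigma$}: every vertex of $H_i$ lies in $F_i$ or $F_{i+1}$, both facets of $\Sigma$ distinct from $H_i$, so $H_i$ has no free vertex, while every leaf must contain one. Thus the leaf of $\Sigma$ is some matched facet $F_i$; analyzing its branch $B$ shows $B$ cannot be any $F_j$ (these are disjoint from $F_i$, whereas $B\cap F_i\neq\emptyset$), so $B$ is one of the two bridges incident to $F_i$, say $B=H_i$, forcing $H_{i-1}\cap F_i\subseteq H_i\cap F_i$.

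The step I expect to require the most care is turning this local nesting, together with the minimality of the cycle, into a global contradiction. I plan to extract from the cyclic data a \emph{special cycle} of $\Delta$: choosing vertices $a_i\in H_i\cap F_i$ and $b_i\in H_i\cap F_{i+1}$, the facets $H_1,F_2,H_2,F_3,\dots,H_t,F_1$ together with the alternating vertices $a_1,b_1,\dots,a_t,b_t$ close up into a cycle in which, because the $F_i$ are pairwise disjoint and non-consecutive bridges are disjoint, each facet meets the cycle in exactly its two prescribed vertices. Since simplicial forests are precisely the simple totally balanced hypergraphs, which contain no such special cycle by \cite[Theorem 3.2]{HHTZ}, this contradiction shows $\mathcal{G}$ is a forest, and the proof concludes as above. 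The delicate point is ensuring that the representatives $a_i,b_i$ can be chosen genuinely distinct and that no facet of the sequence picks up a spurious third cycle-vertex; I expect the nesting relation from the leaf/branch analysis, combined with the shortest-cycle hypothesis, to be exactly what rules this out.
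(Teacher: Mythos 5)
Your first and last paragraphs match the paper's proof exactly (its graph $G_M$ is your $\mathcal{G}$, and the extraction of a restricted matching from a vertex of degree at most one is identical), but your proof of acyclicity of $\mathcal{G}$ has a genuine gap at exactly the point you flag as delicate. The claim that each facet of the cycle meets it in exactly its two prescribed vertices ``because the $F_i$ are pairwise disjoint and non-consecutive bridges are disjoint'' is not justified by those two facts: the dangerous case is \emph{consecutive} bridges. Since $H_{i-1}\subseteq F_{i-1}\cup F_i$ and $H_i\subseteq F_i\cup F_{i+1}$, the two bridges incident to $F_i$ may intersect inside $F_i$, and then your chosen $b_{i-1}\in H_{i-1}\cap F_i$ can lie in $H_i$, which then contains the three cycle vertices $b_{i-1},a_i,b_i$ and destroys specialness. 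Moreover, the tool you propose for repairing this is backwards: the nesting $H_{i-1}\cap F_i\subseteq H_i\cap F_i$ produced by your leaf/branch analysis is precisely the situation in which \emph{every} choice of $b_{i-1}$ lands in $H_i$; it is the obstruction to your construction, not the mechanism that rules out spurious vertices, and it gives information only at the single index where the leaf sits, while you need control at every index.

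The strategy is salvageable, but the missing idea is a contraction step, not the nesting relation. Whenever $H_{i-1}\cap H_i\cap F_i\neq\emptyset$, pick a common vertex $c_i$ there, delete $F_i$ from the facet sequence, and splice $\dots,H_{i-1},c_i,H_i,\dots$; whenever $H_{i-1}\cap H_i\cap F_i=\emptyset$, arbitrary choices $b_{i-1}\in H_{i-1}\cap F_i$ and $a_i\in H_i\cap F_i$ are automatically distinct and avoid the neighbouring bridge. One then checks that the resulting alternating sequence is a genuine special cycle of length at least $t\geq 3$, so \cite[Theorem 3.2]{HHTZ} applies; notably, this version needs neither the leaf analysis nor the shortest-cycle hypothesis. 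The paper's proof of the same acyclicity avoids special cycles altogether but hinges on the same dichotomy: it forms the subcomplex containing all the bridges but only those matching facets $F_i$ with $H_{i-1}\cap H_i=\emptyset$ (note $H_{i-1}\cap H_i\subseteq F_i$ always), and verifies case by case that this subcomplex has no leaf at all, contradicting the definition of a simplicial forest. In either repair, the case distinction on whether consecutive bridges meet is the actual content of the step, and it is absent from your proposal.
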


\begin{proof}
	Let $s=\nu(\Delta)$ and $M=\{E_1, \ldots, E_s\}$ be a $s$-matching  of $\Delta$. Consider the graph $G_M$ associated with $M$ such that $V(G_M)= M$ and \[
	E(G_M)=\{\{E_i,E_j\}: \; E_i \text{ and } E_j \text{ do not form a gap in } \Delta \}.
	\] 
	
	We first prove that $G_M$ is a forest. To do this, suppose that $G_M$ is not a forest; that is, $G_M$ contains a cycle of length at least three. We may assume that the sequence of vertices $E_1, \ldots, E_r$ gives a minimal cycles in $G_M$ for some $r \leq s$. By the definition of $G_M$, for each $i=1, \ldots, r$, there exists $F_i \in \F(\Delta)$ such that $F_i \subset  E_i \cup E_{i+1}$ and $F_i \not\in  \langle E_i , E_{i+1}\rangle$ .
	
	Let  $\Delta'$ be the subcollection of $\Delta$ such that $F_1,  \ldots,F_r \in \Delta'$ and if $F_{i-1} \cap F_i = \emptyset$ for some $i\in\{2, \ldots, r\}$ then $E_i \in \Delta'$. Since $\Delta$ is a simplicial forest, the subcollection $\Delta'$ is also a simplicial forest and contains a leaf. For all $i=1, \ldots, r$, using the fact that $M$ is a matching of $\Delta$ provides that  if $F_i \cap F_j \neq \emptyset$, then $j \in \{i-1, i+1\}$, and if $F_i \cap E_j \neq \emptyset$, then $j \in \{i, i+1\}$. Therefore, if $F_i$ is a leaf of $\Delta'$, then the only possible candidates to be a joint of $F_i$ in $\Delta'$ are $F_{i-1}, F_{i+1}$ and $E_i, E_{i+1}$ provided that $E_i, E_{i+1} \in \Delta'$. We have the following possible cases:
	
	\begin{itemize}
		\item[(1)] Let $E_i, E_{i+1} \in \Delta'$. Then $F_i \cap F_{i+1}= \emptyset$ and $F_{i-1} \cap F_i= \emptyset$. Moreover,  $F_i\cap E_i$ and $F_i \cap E_{i+1}$ are nonempty and disjoint. This shows that $F_i$ is not a leaf of $\Delta'$.
  
		\item[(2)] Let $E_i \in \Delta'$ and $E_{i+1} \notin  \Delta'$. Then $F_i \cap F_{i+1}\neq \emptyset$ and $F_{i-1} \cap F_i = \emptyset$ . Since $F_i \cap F_{i+1}\subset E_{i+1}$, we see that $F_i \cap F_{i+1}$ and $E_i \cap F_i$ are disjoint. Hence $F_i$ is not a leaf of $\Delta'$. The case when $E_i \notin \Delta'$ and $E_{i+1} \in  \Delta'$ can be argued in a similar way to conclude that $F_i$ is not a leaf of $\Delta'$. 
  
		\item[(3)]	Let $E_i, E_{i+1} \notin \Delta'$. Then $F_i\cap F_{i-1} \neq \emptyset$ and $F_i\cap F_{i-1} \neq \emptyset$. It follows from the fact that $E_i \cap E_{i+1} = \emptyset$ and  $F_i\cap F_{i-1} \subset E_i$ and  $F_i\cap F_{i+1} \subset E_{i+1}$ that $F_i$ is not a leaf of $\Delta'$. 		
	\end{itemize}
	The above discussion shows that $F_i$ is not a leaf of $\Delta'$ for any $i=1, \ldots, r$. 	If $E_i \notin \Delta'$ for all $i=1, \ldots, r$, then $\Delta'$ does not contain a leaf, which is a contradiction to $\Delta$ being a simplicial forest. Let $E_i \in \Delta'$ for some $i= 1, \ldots, r$ such that $E_i$ is a leaf of $\Delta'$. Then $F_{i-1} \cap F_i = \emptyset$. The only possible joints of $E_i$ in $\Delta'$ are $F_{i-1}$ and $F_i$ because $E_i$ does not intersect any other facet non-trivially. On the other hand,we observe that $E_i \cap F_{i-1}$ and $E_i \cap F_i$ do not contain each other because $F_{i-1} \cap F_i = \emptyset$. From this we conclude that $E_i$ is not a leaf of $\Delta'$, a contradiction to the assumption that $\Delta$ is a simplicial forest. Therefore, $G_M$ does not contain any cycle and the claim holds. 
	
	If $G_M$ contains an isolated vertex, say $E_i$, then $E_i$ forms a gap with all other elements of $M$. This gives $\nu_0(\Delta)=\nu(\Delta)$. If $G_M$ does not contain any isolated vertex, then pick $E_i$ such that $E_i$ is a leaf of $G_M$ and let $E_j$ be the unique neighbor of $E_i$ in $G_M$. Then $M \setminus \{E_j\}$ forms a restricted matching of $\Delta$ and $\nu_0(\Delta)=\nu(\Delta)-1$.	
\end{proof}

\section{simplicial trees with linear squarefree powers}\label{Section: Simplicial forests with linear squarefree powers}

Let $S=K[x_1,\dots,x_n]$ and $I$ be a homogeneous ideal of $S$. From the famous Hilbert’s Syzygy Theorem it is well-known that a minimal graded free resolution $\mathbb{F}(I)$ of $I$ exists, it is unique up to isomorphisms and it is finite with length at most $n$. In such a case, $\mathbb{F}(I)$ can be written as
\begin{small}
	\[ 
	0\rightarrow \bigoplus_{j\in \ZZ} S(-j)^{\beta_{\ell,j}}  \xrightarrow{d_\ell}\cdots \rightarrow  \bigoplus_{j\in \ZZ} S(-j)^{\beta_{i,j}} \xrightarrow{d_i}   \cdots \rightarrow   \bigoplus_{j\in \ZZ} S(-j)^{\beta_{0,j}} \xrightarrow{d_0} I \rightarrow 0,
	\]
\end{small}
where $\ell \leq n$. The numbers $\beta_{i,j}$ are called the \textit{graded Betti numbers} of $I$ and the \textit{Castelnuovo-Mumford regularity} (or simply \textit{regularity}) of $I$ is $\reg(I)=\max\{j:\beta_{i,i+j}\neq 0,\text{ for some } i \}$. Moreover, $\reg(I)=\reg(S/I)+1.$  If $\beta_{i,j}(I) = 0$ for all $i \geq 0$ and for $j \neq i + t$, then $I$ admits a \textit{linear resolution}. From \cite[Proposition 8.2.1]{HH1} we know that $I$ has a linear resolution if $I$ has linear quotients, that is if there exists a system of homogeneous generators $f_1, f_2, \dots, f_m$ of $I$ such that $(f_1,\dots, f_{i-1}): f_i$ is generated by linear forms, for all $i=1,\dots,m$. 

 \subsection{Simplicial trees with linear squarefree powers} 
 In this subsection, our aim is to show that if the facet ideal of a simplicial tree has linear resolution then its non-vanishing squarefree powers also have linear resolution. Zheng in \cite{Z} characterized all simplicial trees whose facets ideals have linear resolution. Indeed, it is shown in \cite[Theorem 3.17]{Z} that the facet ideal of a simplicial tree $\Delta$ has linear resolution if and only if $\Delta$ satisfies the intersection property. However, it turns out that for any simplicial tree $\Delta$ with the intersection property, we have $\nu(\Delta)\leq 2$, as shown in Proposition \ref{lem:matchingnumber}. So, if $\Delta$ has the intersection property then $I(\Delta)$ has at most two non-vanishing squarefree powers. To prove Proposition \ref{lem:matchingnumber}, we first recall some definitions from \cite{Z}. 


 A simplicial complex $\Delta$ is said to be {\em connected in codimension 1}, if for any two facets $F$ and $G$ of $\Delta$ with $\dim(F) \geq \dim(G)$, there exists a chain $\mathcal{C} : F= F_0, \ldots, F_n=G$ between $F$ and $G$ such that $\dim(F_i \cap F_{i+1} )= \dim(F_{i+1})-1$ for all $i=0, \ldots, n-1$. Such a chain $\mathcal{C}$ is called a {\em proper} chain. A proper chain $\mathcal{C} $ between $F$ and $G$ is called {\em irredundant} if no subsequence of this chain except $\mathcal{C} $ itself is a proper chain between $F$ and $G$.
 
 Let $\Delta$ be a pure $d$-dimensional simplicial tree connected in codimension 1. It is known from \cite[Proposition 1.17]{Z} that for any two facets $F$ and $G$, there exists a unique irredundant proper chain between $F$ and $G$. The length of the unique irredundant proper chain between $F$ and $G$ is called the {\em distance} between $F$ and $G$, and is denoted by $\mathrm{dist}(F,G)$. If for any two facets $F$ and $G$ with $\dim (F \cap G)= d-k$ for some $k\in \{1, \ldots, d+1\}$, we have $\mathrm{dist} (F,G)= k$, then $\Delta$ is said to have the {\em intersection property}. 
 
 \begin{Proposition} \label{lem:matchingnumber}
 	Let $\Delta$ be a simplicial  tree with the intersection property and $\dim(\Delta) \geq 1$. Then $\nu(\Delta)\leq 2$. 
 \end{Proposition}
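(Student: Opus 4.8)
The plan is to argue by contradiction. First I observe that, since the intersection property (and the distance function $\dist$) is only defined for a pure $d$-dimensional simplicial tree that is connected in codimension $1$, the hypothesis forces $\Delta$ to be exactly such a complex, with $d=\dim(\Delta)\geq 1$; in particular $\Delta$ is connected. The intersection property can then be restated as $|F\cap G|=(d+1)-\dist(F,G)$ for all facets $F,G$, so that two facets are disjoint precisely when they sit at the maximal distance $d+1$. Assuming $\nu(\Delta)\geq 3$, I fix three pairwise disjoint facets $F_1,F_2,F_3$; each pair then satisfies $\dist(F_i,F_j)=d+1$.

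Next I analyze the unique irredundant proper chain between two disjoint facets, which exists by \cite[Proposition 1.17]{Z}. Writing the chain from $F_1$ to $F_2$ as $F_1=A_0,A_1,\dots,A_{d+1}=F_2$, consecutive facets meet in a face of dimension $d-1$, so each $A_{i+1}$ arises from $A_i$ by deleting one vertex and inserting one vertex. Tracking $|A_i\cap F_2|$, which changes by at most one at each step yet grows from $0$ to $d+1$ over $d+1$ steps, forces every step to insert a vertex of $F_2$ and delete a vertex of $F_1$; hence every $A_i\subseteq F_1\cup F_2$. Since a facet shared by the chains for $F_1F_2$ and $F_1F_3$ would lie in $(F_1\cup F_2)\cap(F_1\cup F_3)=F_1$ and therefore equal $F_1$, the three chains can only meet in the corner facets $F_1,F_2,F_3$. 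Their concatenation is thus a simple cycle $C:H_0,H_1,\dots,H_{m-1},H_0$ (indices read cyclically) in the facet-adjacency graph, in which consecutive facets share a codimension-$1$ face.

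The heart of the argument is a leaf-obstruction. If $H_0,\dots,H_{m-1}$ with $m\geq 3$ are distinct $d$-dimensional facets with $\dim(H_i\cap H_{i+1})=d-1$ such that, for every $i$, the two codimension-$1$ faces $H_{i-1}\cap H_i$ and $H_i\cap H_{i+1}$ are distinct, then $\langle H_0,\dots,H_{m-1}\rangle$ has no leaf: a branch $G$ of $H_i$ would have to contain both faces, whose union is all of $H_i$, forcing $H_i\subseteq G$ and hence $G=H_i$, a contradiction. I call such a cycle \emph{proper}. Along $C$ the distinctness condition holds automatically at every non-corner facet, because at an interior facet of a chain the vertex just inserted (lying in some $F_j$) differs from the vertex about to be deleted (lying in some $F_i$).

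It remains to deal with the corners, and this is the step I expect to be the main obstacle. A corner $F_i$ violates properness only when its two incident chains delete the same vertex $v$ first; then both neighbours of $F_i$ have the form $(F_i\setminus\{v\})\cup\{\ast\}$, so they share the codimension-$1$ face $F_i\setminus\{v\}$ and are adjacent. I can then delete $F_i$ and splice its two neighbours together, shortening the cycle, and a short computation shows the new junction is proper (the newly created common face omits the inserted $F_j$-vertex, while the adjacent chain face contains it). Iterating this reduction removes every degenerate corner and produces a proper cycle, whose generated subcomplex is a leaf-free subcollection of $\Delta$ --- contradicting that every nonempty subcomplex of a simplicial tree has a leaf, and thereby forcing $\nu(\Delta)\leq 2$. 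The delicate point I would verify carefully is that this reduction always terminates at a genuine proper cycle of length at least $3$ and never collapses into a ``book'' of facets sharing one common codimension-$1$ face (which would have a leaf); the only situation in which the chains are short enough for neighbours to coincide is $d=1$, which I would settle by a direct check.
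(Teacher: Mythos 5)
Your proof is correct and starts from the same place as the paper's: assume three pairwise disjoint facets, use the intersection property to obtain the irredundant proper chains of length $d+1$ between each pair, show by the step-counting argument that each chain stays inside the union of its two endpoints (so the three chains meet only at the corners), and then exhibit a subcomplex of $\Delta$ with no leaf. Where you diverge is in the choice of witness. The paper takes only six facets --- the three corners $E_1,E_2,E_3$ together with the penultimate facet of each chain --- and checks directly that none of them is a leaf of the generated subcomplex; this makes the cycle formalism, the ``proper cycle has no leaf'' lemma, and your degenerate-corner splicing unnecessary. Your full-cycle construction is heavier, but it has one genuine advantage: it confronts the fact that the vertex ordering a corner inherits from its incoming chain need not agree with the ordering it inherits from its outgoing chain. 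The paper's labelling $F''_i=\{a_1,\dots,a_i,c_{i+1},\dots,c_n\}$ silently assumes these agree; if, say, the last vertex of $E_1$ deleted along $\mathcal{C}_1$ differs from the last vertex of $E_1$ inserted along $\mathcal{C}_3$, then $E_1\cap F_{n-1}$ is \emph{contained in} $E_1\cap F''_{n-1}$, so $F''_{n-1}$ becomes a branch and $E_1$ actually is a leaf of the paper's six-facet subcomplex --- exactly the kind of mismatch your splicing step absorbs. The verifications you defer all go through: corners are never adjacent to one another in the cycle, so at most three independent splices occur; the reduced cycle has length at least $3(d+1)-3\geq 3$ with all facets distinct; the spliced neighbours $(F_i\setminus\{v\})\cup\{a\}$ and $(F_i\setminus\{v\})\cup\{c\}$ differ because $a$ and $c$ lie in distinct members of the matching; and properness at every vertex already rules out the ``book'' degeneration, with no separate case needed for $d=1$. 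So your argument closes, at the cost of being considerably longer than the paper's.
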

 \begin{proof}
 	From the definition of the intersection property, we know that $\Delta$ is pure. Let $\dim \Delta = n-1$. 
  
  On contrary, assume that $\nu(\Delta)\geq 3$, and let $E_1,E_2$ and $E_3$ be three pairwise disjoint facets of $\Delta$.  Let $E_1=\{a_1,\ldots,a_n\}$, $E_2=\{b_1,\ldots ,b_n\}$ and $E_3=\{c_1,\ldots,c_n\}$. Since $\Delta$ has intersection property, using $\dim(E_i\cap E_j)=-1$, we obtain $\mathrm{dist}(E_i,E_j)=n$, for all $i\neq j$.   Let $ \mathcal{C}_1:\ \ E_1=F_0,F_1, \ldots ,F_n=E_2$ be the unique  irredundant chain between $E_1$ and $E_2$. Since  $\dim(F_i\cap F_{i+1})=\dim(F_{i+1})-1$, after rearranging indices, we set $F_i=\{b_1, \ldots, b_i, a_{i+1}, \ldots, a_n\}$, for all $i=1, \ldots, n-1$. Similarly, let $ \mathcal{C}_2:\ \ E_2=F'_0,F'_1, \ldots ,F'_n=E_3$ be the  unique  irredundant chain between $E_2$ and $E_3$ with $F'_i=\{c_1, \ldots, c_i, b_{i+1}, \ldots, b_n\}$ for all $i=1, \ldots, n-1$. Lastly, let $  \mathcal{C}_3:\ \ E_3=F''_0,F''_1, \ldots ,F''_n=E_1 $ be the  unique  irredundant chain between $E_1$ and $E_3$ with $F''_i=\{a_1, \ldots, a_i, c_{i+1}, \ldots, c_n\}$ for all $i=1, \ldots, n-1$.
 	
 	Consider $\Delta'=\langle E_1, E_2, E_3, F_{n-1}, F'_{n-1}, F''_{n-1} \rangle \subseteq \Delta$. Observe that $E_1\cap  F_{n-1}=\{a_n\}$, $E_1 \cap  F''_{n-1}=\{a_1, \ldots, a_{n-1}\}$ and the intersection of $E_1$ with $E_2, E_3$ and $F'_{n-1}$ is trivial. This shows that $E_1$ is not a leaf of $\Delta'$. Similarly, one can see that none of the facet of $\Delta'$ is a leaf of $\Delta'$, and hence $\Delta'$ is not a simplicial tree, which is a contradiction to $\Delta$ being a simplicial tree.  Therefore, we conclude  $\nu(\Delta)\leq 2$.
 \end{proof}
We conclude this subsection with the following result which can be seen as a consequence of \cite[Lemma 2.3]{KK1} and \cite[Proposition 2.10]{JZ}.

\begin{Theorem}\label{thm: simplicial tree with i.p. has linear quotients}
    Let $\Delta$ be a simplicial tree with intersection property. Then, $I(\Delta)$ and ${I(\Delta)}^{[2]}$ has linear quotients.
\end{Theorem}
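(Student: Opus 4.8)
The plan is to use Proposition~\ref{lem:matchingnumber} to reduce to only two squarefree powers and then verify linear quotients for each by hand. Since the intersection property forces $\Delta$ to be pure, either $\dim\Delta=0$, in which case $\Delta$ is a single vertex and the statement is trivial, or $\dim\Delta\geq 1$. In the latter case Proposition~\ref{lem:matchingnumber} gives $\nu(\Delta)\leq 2$, so $I(\Delta)^{[k]}=0$ for $k\geq 3$ and it suffices to treat $k=1$ and $k=2$.

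For $k=1$ I would work with the good leaf ordering $F_1,\dots,F_r$ supplied by Lemma~\ref{Lem:kumars} and show that $I(\Delta)$ has linear quotients with respect to $f_1,\dots,f_r$. Fix $j<i$. Property (iii) of Lemma~\ref{Lem:kumars} yields $k\in\{j,\dots,i-1\}$ with $|F_k\cap F_i|=|F_i|-1$ and $F_j\cap F_k\not\subseteq F_i$. Purity forces $F_k\setminus F_i$ to be a single vertex $w$, so $f_k/\gcd(f_k,f_i)=x_w$ is a variable; moreover any $y\in (F_j\cap F_k)\setminus F_i$ must equal $w$, so $w\in F_j\setminus F_i$, i.e. $x_w$ divides $f_j/\gcd(f_j,f_i)$. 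By the standard colon criterion this establishes linear quotients for $k=1$.

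For $k=2$ (the substantial case, which we may assume is nonempty, i.e. $\nu(\Delta)=2$) I would order the generators $g_{\{a,b\}}=f_af_b$, indexed by the $2$-matchings $\{F_a,F_b\}$ with $a<b$, by increasing larger index and then increasing smaller index. A key preparatory observation is an \emph{interval property} extracted from property (ii): for every vertex $v$ the index set $\{i:v\in F_i\}$ is a contiguous interval in the good leaf order. To verify linear quotients I would show that, for a target generator $g_{\{p,q\}}$, the colon ideal of the earlier generators is generated by variables. Given any earlier $2$-matching $\{F_s,F_t\}$, Corollary~\ref{cor:M=N1} together with a cardinality count guarantees $(F_s\cup F_t)\setminus(F_p\cup F_q)\neq\emptyset$; I would then produce an earlier $2$-matching obtained from $\{F_p,F_q\}$ by replacing one of its facets with an adjacent facet (differing in a single vertex, which exists by property (i)) of smaller index, chosen so that the single new vertex lies in $(F_s\cup F_t)\setminus(F_p\cup F_q)$ while disjointness of the matching is preserved. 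Such a replacement contributes exactly that variable to the colon ideal, and that variable divides $g_{\{s,t\}}/\gcd(g_{\{s,t\}},g_{\{p,q\}})$, yielding the colon criterion.

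The main obstacle is precisely this exchange step for $k=2$: one must simultaneously preserve the disjointness of the two facets forming the matching and arrange that the vertex introduced by the swap belongs to the competing earlier matching, all while keeping the larger index strictly smaller. This is where the full strength of $\Delta$ being a simplicial tree enters, through the existence of leaves in every subcomplex together with properties (i)--(iii) of Lemma~\ref{Lem:kumars}, the interval property derived from (ii), and the uniqueness of matchings with a prescribed vertex support (Proposition~\ref{M=N}). I expect the verification to split into cases according to whether the swap is performed on the facet of larger or of smaller index, and whether the vertex to be introduced lies in $F_s$ or in $F_t$, with the simplicial-tree hypothesis ruling out the configurations (special cycles among the facets involved) that would otherwise obstruct the exchange.
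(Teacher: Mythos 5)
Your reduction to $k\leq 2$ via Proposition~\ref{lem:matchingnumber} and your direct argument for $k=1$ (using property (iii) of Lemma~\ref{Lem:kumars} in place of citing Zheng) are correct and match the paper's overall strategy. The problem is the case $k=2$, which is the entire substance of the theorem: you describe the exchange step that is needed and then explicitly defer it (``I expect the verification to split into cases\dots''), so the central argument is never carried out. The existence of a replacement $2$-matching that simultaneously (a) precedes the target $\{F_p,F_q\}$ in your order, (b) differs from it in a single vertex, (c) has that incoming vertex lie in the support of the competing earlier generator, and (d) remains a matching, is exactly what has to be proved, and it does not follow from the general considerations you list.

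Concretely, the tool you point to for producing the adjacent facet --- property (i) of Lemma~\ref{Lem:kumars}, i.e.\ $\mathrm{dist}(F_i,F_{i+1})=1$ --- is not sufficient: it gives \emph{some} facet differing in one vertex, but no control over \emph{which} vertex is swapped in, so condition (c) cannot be arranged from it. The paper instead applies property (iii): given the competing facet $F_j$ (resp.\ $F_i$) and the target facet $F_\ell$ (resp.\ $F_k$), it produces $F_q$ with $|F_q\cap F_\ell|=|F_\ell|-1$ \emph{and} $F_j\cap F_q\not\subseteq F_\ell$, which forces $F_q=(F_\ell\setminus\{b\})\cup\{a\}$ with $a\in F_j\setminus F_\ell$; this single application delivers (a), (b) and (c) at once. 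Condition (d) is then verified using $F_i\cap F_j=F_i\cap F_\ell=\emptyset$ when the two generators share a facet, and using property (ii) (the vertex $a\in F_i\setminus F_k$ with $i<k<\ell$ cannot reappear in $F_\ell$) when they do not. Until you supply this exchange argument in full --- and it is where the simplicial-tree hypothesis actually does its work --- the proof has a genuine gap at its decisive step.
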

\begin{proof}
  Let $\Delta$ be a simplicial tree with the intersection property. Zheng's result \cite[Proposition 3.9 and Theorem 3.17]{Z} gives that $I(\Delta)$ has linear quotients, and this result is generalized in \cite[Lemma 2.3]{KK1} which proves that ${I(\Delta)}^{k}$ has linear quotients for all $k$. Moreover, \cite[Proposition 2.10]{JZ} asserts that if a monomial ideal $J$ has linear quotients, then the ideal generated by the squarefree monomials in $J$ also has linear quotients. Therefore, the desired conclusion follows from \cite[Proposition 2.10]{JZ} together with \cite[Lemma 2.3]{KK1}. 
\end{proof}

\subsection{The $\nu$-th  squarefree power of simplicial trees}
Let $\Delta$ be a simplicial tree. In this subsection, we study the linearity of the highest non-vanishing squarefree power, namely the $\nu$-th squarefree power of $I(\Delta)$.  It is shown in \cite[Theorem 5.1]{BHZ}, that the $\nu$-th squarefree power of the edge ideal of a simple graph has linear quotients. Such a statement does not hold for arbitrary squarefree monomial ideals, or even for the facet ideals of simplicial trees, as shown in the following example.

\begin{Example}\em\label{exa: 3-path of a rooted tree with no linear resolution}
Let $\Gamma$ be the rooted tree in Figure \ref{fig:rooted tree} and $I(\Gamma_3)$ be the $3$-path ideal of $\Gamma$. Then $I(\Gamma_3)=(x_1x_2x_4,x_1x_2x_5,x_1x_3x_6, x_1x_3x_7,x_2x_4x_8,x_2x_5x_9, x_3x_6x_{10}, x_3x_7x_{11}).$
\begin{figure}[h]
    \centering
    \includegraphics[scale=0.9]{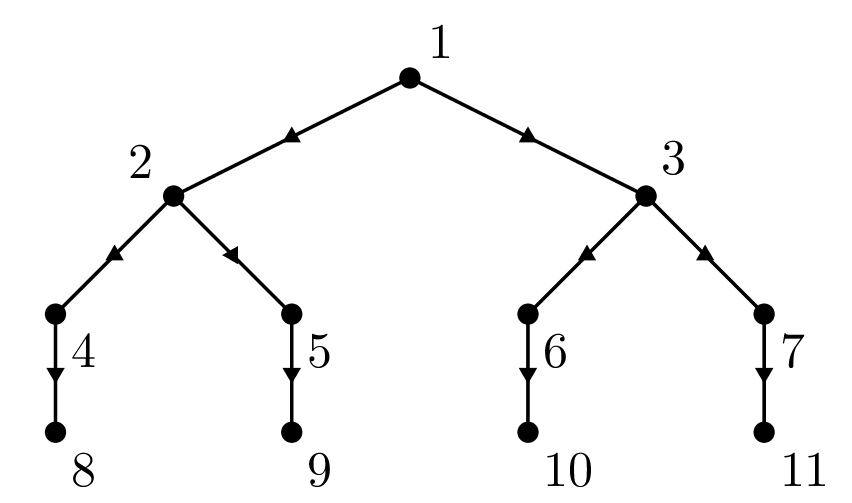}
    \caption{A rooted tree.}
    \label{fig:rooted tree}
\end{figure}
Observe that $\nu(\Gamma_3)=2$, and $I(\Gamma_3)^{[k]}=0$ for all $k\geq3$. Easy computations with \texttt{Macaulay2} \cite{M2} show that $I(\Gamma_3)^{[2]}$ does not have a linear resolution. This shows that \cite[Theorem 5.1]{BHZ} cannot be extended even to the case of $t$-path ideals of rooted trees. 
\end{Example}

In view of Example 2.3, we are led to ask for which classes of simplicial trees one can generalize \cite[Theorem 5.1]{BHZ}. More precisely, we want to address the following question.
\begin{Question}
Can we find some classes of simplicial trees whose facet ideals have the property that their $\nu$-th squarefree powers have linear resolutions. 
\end{Question}
 In an effort to answer above question, we prove that an analogue of \cite[Theorem 5.1]{BHZ} holds true for $I(\Gamma_t)$, where $\Gamma_t$ is a simplicial tree whose facets are the directed $t$-paths of a special rooted tree $\Gamma$ known as broom graph, see \cite{BHaK}. 
 A \emph{broom graph} $\Gamma$ of height $h$ is a rooted tree with root $x$, consisting of a {\em handle} which forms a directed path of length $h$ rooted at $x$, and all the other vertices that are not on the handle are leaves of $\Gamma$. See Figure~\ref{Broom Graph} for an example of a broom graph. A graph consisting of only a directed path is a broom graph consisting of only handle. Before stating our next theorem related to broom graphs, we setup the following notation. 

\begin{Notation}\label{rem:facetofgamma}
{\em Let $\Gamma$ be a broom graph rooted at the vertex $x_{0,0}$ with $\mathrm{ht}(\Gamma)=h$ and let $x_{0,0},x_{1,0}\ldots ,x_{h,0}$ be the vertices of the handle of $\Gamma$. Furthermore, for each $1 \leq i \leq h$, let $l_i$ be the number of vertices of $\Gamma$ which do not lie on the handle and their unique neighbor on the handle is $x_{i-1,0}$. We set the following notation for the facets of $\Gamma_t$ where $ t\geq 2$.  For $0\leq i\leq h-t+1$ and $0\leq j\leq \ell_{i+t-1}$, set 
		\begin{equation}\label{Eq 1}
			F_{i,j}=\{x_{i,0},x_{i+1,0}, \ldots ,x_{i+t-2,0},x_{i+t-1,j}\}.
		\end{equation}
	We define a total order on $\F(\Gamma_t)$ as follows: for all $F_{i,j}, F_{k,m} \in \F(\Gamma_t)$ with $i,k \in \{0, \ldots, h-t+1 \}$, $0\leq j\leq \ell_{i+t-1}$ and $0 \leq m \leq \ell_{k+t-1}$, we set $F_{i,j} < F_{k,m} $ if either $i < k$ or $i=k$ and $m< j$. 
		
We identify the vertices of $\Gamma$ as variables and set $S=K[x_{i,j} : x_{i,j} \in V(\Gamma)]$. For each $F_{i,j} \in \F(\Gamma_t)$, let $m_{i,j}$ be the monomial corresponding to $F_{i,j}$, that is $m_{i,j}=x_{i,0}x_{i+1,0}\cdots x_{i+t-2,0}x_{i+t-1,j}$. Let $ 1 \leq k \leq \nu(\Gamma_t)$.  Due to Corollary~\ref{cor:M=N1},  any monomial generator of $I^{[k]}$ can be uniquely expressed as $m_{i_1, j_1}\cdots m_{i_k, j_k}$ such that $M=\{F_{i_1, j_1}, \ldots, F_{i_k, j_k}\}$ is a $k$-matching. Let $u_a, u_b \in G(I^{[k]})$ with $u_a=m_{i_1,j_1}\cdots m_{i_k,j_k}$ and $u_b=m_{i'_1,j'_1}\cdots m_{i'_k,j'_k}$, such that $F_{i_1,j_1}< \ldots < F_{i_k,j_k}$ and $F_{i'_1,j'_1} < \ldots < F_{i'_k,j'_k}$ and $s=\max\{\ell: F_{i_\ell,j_\ell}\neq F_{i'_\ell,j'_\ell}\}$. We set $u_b <u_a$ if $F_{i'_s,j'_s}< F_{i_s,j_s}$. 
}
\end{Notation}

\begin{Example}\em
	\begin{figure}[h]
		\centering
		\includegraphics[scale=0.8]{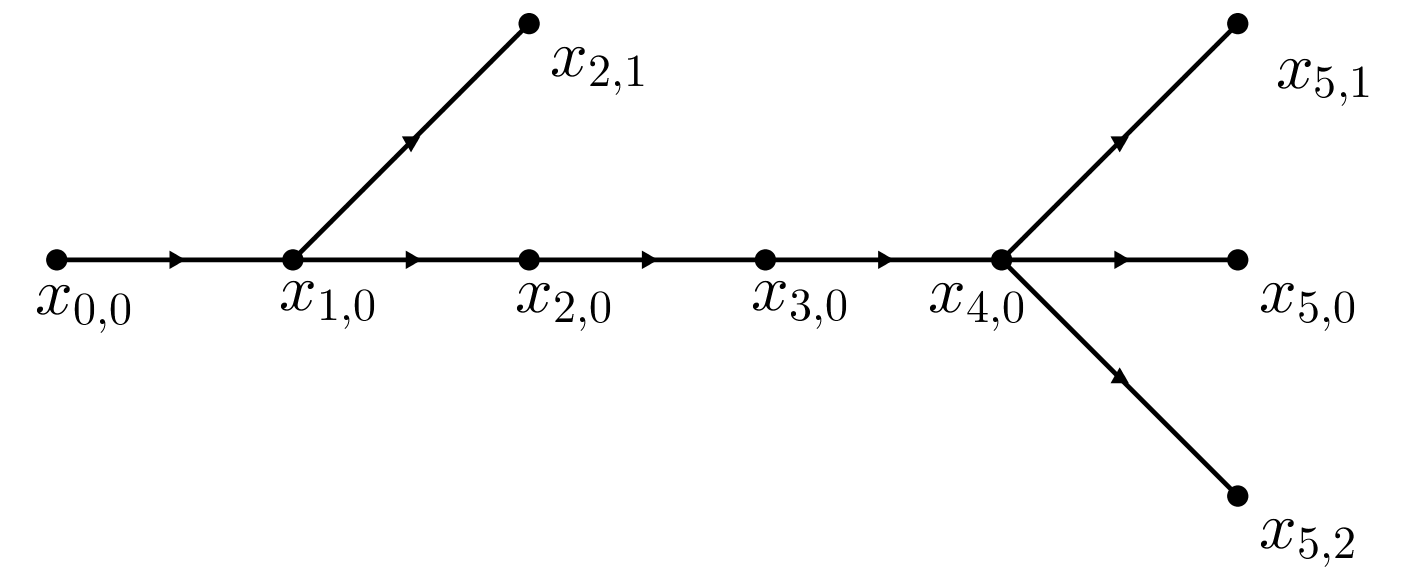}
		\caption{A broom graph.}
		\label{Broom Graph}
	\end{figure}
	Let $\Gamma$ be the broom graph as in Figure \ref{Broom Graph} and $\Gamma_3$ be its $3$-path simplicial tree. Then $\mathrm{ht}(\Gamma)=5$ and  $l_1=l_3=l_4=0$, $l_2=1$ and $l_5=2$. Following Notation~\ref{rem:facetofgamma}, we label the facets of $\Gamma_3$ as $ F_{0,0}=\{x_{0,0},x_{1,0},x_{2,0}\}$, $F_{0,1}=\{x_{0,0},x_{1,0},x_{2,1}\}$,$F_{1,0}=\{x_{1,0},x_{2,0},x_{3,0}\}$, $F_{2,0}=\{x_{2,0},$ $x_{3,0},x_{4,0}\}$, $F_{3,0}=\{x_{3,0},x_{4,0},x_{5,0}\}$, $F_{3,1}=\{x_{3,0},x_{4,0},x_{5,1}\}$, and $F_{3,2}=\{x_{3,0},$ $x_{4,0},x_{5,2}\}$.  With the total order defined on the facets of $\Gamma_t$ in Notation~\ref{rem:facetofgamma} we obtain $F_{3,0}>F_{3,1}>F_{3,2}>F_{2,0}>F_{1,0}>F_{0,0}>F_{0,1}$. 	Moreover, the elements of $G(I^{[2]} )$ are ordered as 	
	 \[
	 m_{0,0}m_{3,0} >  m_{0,1}m_{3,0} > m_{0,0} m_{3,1}> m_{0,1} m_{3,1}> m_{0,0}m_{3,2} >  m_{0,1} m_{3,2}  > m_{0,1} m_{2,0}.
	 \]
\end{Example}

Now, we give the main result of this subsection. As shown in Proposition~\ref{prop:difference of matchings}, given a simplicial tree $\Delta$, either $\nu(\Delta)= \nu_0(\Delta)$ or $\nu(\Delta)= \nu_0(\Delta)+1$. Thus, the $\nu_0$-th squarefree power of $I(\Delta)$ is either the highest non-vanishing power or the second highest non-vanishing power of  $I(\Delta)$. In Theorem~\ref{thm: k-squarefree power non-linearly related}, we show that $I(\Delta)^{[k]}$ does not have a linear resolution for all $1\leq k < \nu_0(\Delta)$. Therefore, it is interesting to study the
$\nu_0$-th and $\nu$-th squarefree power of $I(\Delta)$. 

\begin{Theorem}\label{thm:broomgraph-linearquotient}
Let $\Gamma$ be a broom graph and $\Gamma_t$ be its $t$-path simplicial tree. Moreover, let $I$ be the $t$-path ideal of $\Gamma$, that is, $I=I(\Gamma_t)$.
	\begin{enumerate}
		\item[{\em(i)}] $I^{[\nu(\Gamma_t)]}$ has linear quotient.
		\item[{\em (ii)}]  In addition, if $\Gamma$ is a path graph, then ${I}^{[\nu_0(\Gamma_t)]}$ has linear quotient. 
	\end{enumerate} 
\end{Theorem}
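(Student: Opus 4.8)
The plan is to establish linear quotients for both parts by exhibiting an explicit order on the minimal generators and, for each pair $u_b < u_a$ in $G(I^{[k]})$, producing an intermediate generator $u_c$ with $u_b < u_c$ such that $(u_c):u_b$ is a single variable and $(u_a):u_b \subseteq (u_c):u_b$. This is exactly the criterion from \cite[Lemma 8.2.3]{HH1} already used in Theorem~\ref{thm: simplicial tree with i.p. has linear quotients}. I would use the order on $G(I^{[k]})$ defined in Notation~\ref{rem:facetofgamma}, which compares two $k$-matchings by the largest index at which their sorted facet sequences differ. The uniqueness of the matching representation from Corollary~\ref{cor:M=N1} guarantees that each generator corresponds to a well-defined $k$-matching, so the combinatorial manipulations below translate unambiguously into monomial statements.

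For part (i), first I would determine $\nu(\Gamma_t)$ combinatorially. Since the facets $F_{i,j}$ all occupy $t-1$ consecutive handle vertices $x_{i,0},\dots,x_{i+t-2,0}$ together with one terminal vertex $x_{i+t-1,j}$, two facets $F_{i,j}$ and $F_{k,m}$ are disjoint essentially when their handle-supports are separated; the maximum matching size is governed by how many disjoint length-$(t-1)$ windows fit along the handle. I expect $\nu(\Gamma_t) = \lfloor (h+1)/t \rfloor$ or a closely related expression, and I would pin this down before analyzing generators. The key structural observation is that in a $k$-matching the facets, sorted as $F_{i_1,j_1}<\dots<F_{i_k,j_k}$, have strictly increasing and well-separated starting indices $i_1 < i_2 < \dots < i_k$. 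Given $u_b < u_a$ differing first at position $s$ (the largest disagreeing index), I would modify $u_b$ at that single facet $F_{i_s,j_s}$: either by shifting its broom-tip $x_{i_s+t-1,j_s}$ to another tip $x_{i_s+t-1,j'}$ over the same handle window (changing $j$), or by a handle shift that moves the window one step toward $F_{i_s,j_s}^{a}$. Because the broom structure makes the tips at a fixed window interchangeable and disjointness depends only on the window, such a replacement keeps the result a valid $k$-matching, raises it in the order, and changes exactly one variable, yielding the required colon ideal $(x_a)$.

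For part (ii), when $\Gamma$ is a path graph all $l_i \in \{0,1\}$, so every window carries a unique tip and the facets reduce to $F_i = \{x_i, x_{i+1}, \dots, x_{i+t-1}\}$ indexed by a single parameter. The claim concerns $\nu_0(\Gamma_t)$ rather than $\nu(\Gamma_t)$, and here I would invoke Proposition~\ref{prop:difference of matchings}, which gives $\nu(\Gamma_t) - \nu_0(\Gamma_t) \le 1$; for path graphs I expect the restricted matching number to be computed explicitly, and the argument for part (i) specialized to the single-parameter indexing should apply almost verbatim to $I(\Gamma_t)^{[\nu_0(\Gamma_t)]}$ once one checks that the relevant intermediate matchings still have size $\nu_0$ and remain restricted. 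The substantive point is that at the restricted-matching threshold the ``gap'' condition is automatically available, so the window-shifting replacement used above does not collide with any other facet.

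The main obstacle I anticipate is verifying that the single-variable replacement simultaneously (a) produces a genuine $k$-matching (pairwise disjointness must be preserved against all $k-1$ other facets, not just the one being modified), and (b) strictly increases the generator in the Notation~\ref{rem:facetofgamma} order while (c) dominating the colon $(u_a):u_b$. The delicate case is when $u_a$ and $u_b$ differ at position $s$ but the handle windows of $F_{i_s,j_s}$ and $F_{i_s,j_s}^{a}$ overlap, forcing a handle shift rather than a mere tip swap; there I must confirm that sliding the window one notch does not overrun the neighboring facet $F_{i_{s-1},j_{s-1}}$ (or the handle boundary), which is where the separation of consecutive starting indices and the maximality of $k$ are used. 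Handling this boundary interaction carefully—rather than the routine monomial bookkeeping—is the crux of the proof.
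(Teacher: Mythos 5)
Your plan for part (i) is essentially the paper's proof: order $G(I^{[\nu]})$ as in Notation~\ref{rem:facetofgamma}, apply \cite[Lemma 8.2.3]{HH1}, and repair $u_b$ at the largest disagreeing position $s$ by either a tip swap (when $i_s=i'_s$) or a one-window handle shift (when $i'_s<i_s$). The one ingredient you gesture at but do not nail down is the step that makes the handle shift safe: since $k=\nu(\Gamma_t)$, if $F_{i_s,j_s}$ and $F_{i'_s,j'_s}$ were disjoint then $\{F_{i_s,j_s}\}\cup N$ would be a $(\nu+1)$-matching, so they must intersect, which pins $i_s$ to the range $i'_s+1\le i_s\le i'_s+t-1$ and guarantees that the shifted facet $F_{i'_s+1,\ast}$ sits at or below $F_{i_s,j_s}$ and hence clears the remaining facets of both matchings. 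Making this explicit is what turns your sketch into a proof of (i).

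For part (ii) there is a genuine gap, and your stated intuition points the wrong way. When $q=\nu_0(\Gamma_t)=\nu(\Gamma_t)-1$, the maximality argument above is unavailable, so the differing facets $F_{i'_s}$ and $F_{i_s}$ can genuinely be disjoint; this is the new case, and it is not covered by ``specializing part (i) almost verbatim.'' You claim that at the restricted-matching threshold the gap condition is ``automatically available'' so the shift ``does not collide with any other facet''; the paper needs exactly the opposite: one must prove that $F_{i'_s}$ and $F_{i_s}$ do \emph{not} form a gap. This is done by contradiction -- if they formed a gap one either gets a restricted matching of size $q+1$ directly (when $s=q$), or, when $i'_s+t<i_s$, one slides the tail facets down by one to build the restricted matching $\{F_{i'_1},\dots,F_{i'_s},F_{i_s-1},F_{i'_{s+1}-1},\dots,F_{i'_q-1},F_{i'_q}\}$ of size $q+1$, contradicting $\nu_0(\Gamma_t)=q$. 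Only after ruling out the gap does one conclude $i_s=i'_s+t$, which is precisely what makes the one-notch shift $F_{i'_s}\mapsto F_{i'_s+1}$ produce a colon ideal generated by the single variable $x_{i'_s+t,0}\in F_{i_s}$ and keep the result a $q$-matching. Without this argument your construction of $u_c$ in the disjoint case has no candidate, so part (ii) as proposed does not go through.
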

\begin{proof}	Set $I=I(\Gamma_t)$. We use Notation~\ref{rem:facetofgamma} to order the elements of $G(I^{[q]})$ as $u_1 > \ldots > u_p$, and use this ordering of $G(I^{[q]})$ to prove $(i)$ and $(ii)$. Let $u_a, u_b \in G(I^{[q]})$  with $a<b$ and $u_a=m_{i_1,j_1}\cdots m_{i_q,j_q}$, $u_b=m_{i'_1,j'_1}\cdots m_{i'_q,j'_q}$ such that $F_{i_1,j_1}< \ldots < F_{i_q,j_q}$ and $F_{i'_1,j'_1} < \ldots < F_{i'_q,j'_q}$. Moreover, let $s=\max \{\ell:F_{i_\ell,j_\ell}\neq F_{i'_\ell,j'_\ell}\}$. Since, $u_b< u_a$, we obtain $F_{i'_s,j'_s}<F_{i_s,j_s}$.   Let $M=\{F_{i_1,j_1}, \ldots , F_{i_q,j_q}\}$ and $N=\{F_{i'_1,j'_1} , \ldots , F_{i'_q,j'_q}\}$. To prove (i) and (ii), we invoke \cite[Lemma 8.2.3]{HH1} and construct a monomial $u_c \in G(I^{[q]})$ with $u_b<u_c$ such that  $(u_c)\colon (u_b)$ is generated by a variable and $(u_a)\colon (u_b)\subseteq (u_c)\colon(u_b)$. 
	
	$(i)$ Let $q=\nu(\Gamma_t)$. Since $ F_{i'_s,j'_s}<F_{i_s,j_s} $, it follows that either $i'_s<i_s$ or $i_s=i'_s$ and $j_s<j'_s$.  We distinguish three cases: (1) $i_s=i'_s$; (2) $i'_s<i_s$  and $0<j'_s$; (3) $i'_s<i_s$  and $j'_s=0$. 
	
	Case (1): Let $i_s=i'_s$. We know that $F_{i'_s,j_s}$ does not intersect any facet in $\{F_{i'_{s+1}, j'_{s+1}}, \ldots,$ $ F_{i'_q, j'_q}\}=\{F_{i_{s+1}, j_{s+1}}, \ldots, F_{i_q, j_q} \}\subset M$ because $M$ is a matching of $\Gamma_t$. The construction of facets of $\Gamma_t$ in (\ref{Eq 1}) together with the fact that $F_{i'_s,j'_s}$ does not intersect any facet in $\{F_{i'_1, j'_1}, \ldots, F_{i'_{s-1}, j'_{s-1}}\}$ shows that $F_{i'_s,j_s}$ also does not intersect any facet in $\{F_{i'_1, j'_1}, \ldots, F_{i'_{s-1}, j'_{s-1}}\}$. From this we conclude that $F_{i'_s,j_s}$ does not intersect any facet in $N\setminus \{F_{i'_s, j'_s}\}$. Hence $u_c=(u_b/ m_{i'_s,j'_s})m_{i'_s,j_s} =(u_b/x_{i'_s+t-1, j'_s}) x_{i'_s+t-1, j_s}	\in G(I^{[q]})$, and $(u_c)\colon (u_b)=(x_{i'_s+t-1, j_s})$. Since $F_{i'_s,j'_s}<F_{i'_s,j_s}$, it follows that $u_b<u_c$. 
	Since $x_{i'_s+t-1, j_s} \in F_{i'_s,j_s}\subseteq  \supp(u_a)$,  we have $  (u_a)\colon (u_b) \subseteq(x_{i'_s+t-1, j_s}) $, as required. 
	
Before proceeding further to Case(2) and Case(3), we need to acknowledge that  $F_{i_s,j_s}\cap F_{i'_s,j'_s}\neq \emptyset$. Indeed, since $F_{i'_s,j'_s}$ is disjoint with all the facets in $\{F_{i'_1,j'_1}, \cdots, F_{i'_{s-1},j'_{s-1}}\}$, and $F_{i'_1,j'_1} < \ldots < F_{i'_s,j'_s}<F_{i_s,j_s}$, it follows immediately that  $F_{i_s, j_s}$ is also disjoint with each of the facet in $\{F_{i'_1,j'_1}, \cdots, F_{i'_{s-1},j'_{s-1}}\}$. Moreover, due to $\{F_{i'_{s+1}, j'_{s+1}}, \ldots, F_{i'_q, j'_q} \}=\{F_{i_{s+1}, j_{s+1}}, \ldots, F_{i_q, j_q} \}\subset M$, the facet $F_{i_s,j_s} $ is disjoint with each of the facet in $\{ F_{i'_{s+1},j'_{s+1}}, \ldots, F_{i'_q,j'_q}\}$. Therefore, if $F_{i_s,j_s}\cap F_{i'_s,j'_s}= \emptyset$, then
	$F_{i_s,j_s} \cup N$ forms a $(q+1)$-matching, a contradiction to the assumption $q=\nu(\Gamma_t)$. Therefore, $F_{i_s,j_s}\cap F_{i'_s,j'_s}\neq \emptyset$. To discuss Case(2) and Case(3) we make use of $F_{i_s,j_s}\cap F_{i'_s,j'_s}\neq \emptyset$. Note that the vertices in $F_{i_s,j_s}\cap F_{i'_s,j'_s}$ lie on the handle of $\Gamma$.
	
	Case (2):  Let $i'_s<i_s$  and $0<j'_s$. Then $F_{i_s,j_s}\cap F_{i'_s,j'_s}\neq \emptyset$ if and only if $i'_s+1 \leq i_s \leq i'_s+t-2$. This shows that $x_{i'_s+t-1, 0}  \in F_{i_s,j_s} \subset \supp(u_a)$. Also $x_{i'_s+t-1, 0}  \not\in F_{i'_s,j'_s}$ due to $j'_s>0$ and therefore $x_{i'_s+t-1, 0}  \not\in \supp(u_b)$. This gives $ (u_a)\colon (u_b) \subseteq (x_{i'_s+t-1, 0})   $. Let $u_c=	(u_b/ m_{i'_s,j'_s})m_{i'_s+1,0}$, Then $(u_c)\colon (u_b)=(x_{i'_s+t-1, 0})$. Since $F_{i'_s,j'_s}<F_{i'_s+1,0}$, it yields $u_b<u_c$.  Again from $F_{i'_s,j'_s}<F_{i'_s+1,0}$,  it is follows immediately that $F_{i'_s+1,0}$ does not intersect any facet in $\{F_{i'_1,j'_1}, \ldots ,  F_{i'_{s-1},j'_{s-1}}\}$. Because of $i'_s+1 \leq i_s \leq i'_s+t-2$, we have either  $F_{i'_s+1,0} = F_{i_s,j_s} $ or $F_{i'_s+1,0} < F_{i_s,j_s}  $. In both case, it is easy to see that $F_{i'_s+1,0}$ does not intersect any element in $\{F_{i'_{s+1}, j'_{s+1}}, \ldots, F_{i'_q, j'_q} \}=\{F_{i_{s+1}, j_{s+1}}, \ldots, F_{i_q, j_q} \}\subset M$. Hence $u_c \in G(I^{[q]})$. Then $ (u_a)\colon (u_b)\subseteq (x_{i'_s+t, 0})  = (u_c)\colon (u_b) $, as required.

	Case (3):  Let $i'_s<i_s$  and $j'_s=0$.  Then $F_{i_s,j_s}\cap F_{i'_s,j'_s}\neq \emptyset$ if and only if $i'_s+1 \leq i_s \leq i'_s+t-1$. If $i'_s+1 =i_s $, then set $u_c=	(u_b/ m_{i'_s,j'_s})m_{i'_s+1,j_s}$, and if $i'_s+1 < i_s \leq i'_s+t-1$ then set $u_c=	(u_b/ m_{i'_s,j'_s})m_{i'_s+1,0}$. In both cases, $u_b<u_c$ and $ (u_a):(u_b)\subset (u_c):(u_b) $, and by arguing similarly as in the cases above, it follows that $u_c \in G(I^{[q]})$. Also, if $i'_s+1 =i_s $ then $(u_c)\colon (u_b)=(x_{i'_s+t, j_s})$, and if $i'_s+1 < i_s \leq i'_s+t-1$ then  $(u_c)\colon (u_b)=(x_{i'_s+t, 0})$. This completes the proof. 

	$(ii)$  Now, let $\Gamma$ be a path graph. It is known from Proposition~\ref{prop:difference of matchings} that $\nu(\Gamma_t)-\nu_0(\Gamma_t)\leq 1$. If $\nu(\Gamma_t)-\nu_0(\Gamma_t)=0$, then the assertion follows by virtue of $(i)$. Therefore, it is enough to consider the case when $\nu_0(\Gamma_t)=\nu(\Gamma_t)-1$. Set $q=\nu_0(\Gamma_t)$. 
	
	Since $\Gamma$ is a path graph, we can view  $\Gamma$ as a broom graph consisting of only the handle $x_{0,0}, x_{1,0}, \ldots, x_{h,0}$. In this case, $j=0$ in (\ref{Eq 1}). To simplify the notation, we set $F_i:=F_{i,0}$ and $m_i:=m_{i,0}$ for each $0\leq i \leq h-t+1$.
	
	To construct the monomial $u_c$, we proceed in the following way. Since $F_{i'_s}< F_{i_s}$, we obtain $i'_s<i_s$.  If $F_{i_s}\cap F_{i'_s}\neq \emptyset$, then the desired conclusion follows from the same argument as in Case (3) above. Now suppose that $F_{i_s}\cap F_{i'_s} = \emptyset$, that is , $i'_s+t-1 < i_s$. 
 Then due to $F_{i'_1}< \ldots < F_{i'_s}<F_{i_s}$, we conclude that  $F_{i_s}$ does not intersect any facet in $\{F_{i'_1}, \ldots,  F_{i'_s}\}$. Also, due to $\{F_{i'_{s+1}}, \ldots, F_{i'_q} \}=\{F_{i_{s+1}}, \ldots, F_{i_q} \}\subset M$, the facet $F_{i_s} $ is disjoint with each of the facet in $\{ F_{i'_{s+1}}, \ldots, F_{i'_q}\}$. Therefore, $A=\{F_{i'_1},\ldots,F_{i'_s},   F_{i_s} , F_{i'_{s+1}}\ldots,F_{i'_q}\}$ forms a maximal matching of $\Gamma_t$.
	
	First we claim that $F_{i'_s}$ and $   F_{i_s}$ do not form a gap in $\Gamma_t$. To prove the claim, assume that $F_{i'_s}$ and $   F_{i_s}$ form a gap in $\Gamma_t$. Then $   F_{i_s}$ forms a gap with all elements in $\{F_{i'_1},\ldots,F_{i'_s}\}$. It yields $s \neq q$, otherwise $A$ is a restricted matching of size $q+1$, a contradiction. Also $i'_s+t \neq  i_s$, otherwise $F_{i'_s+t-1}$ belongs to the induced subcollection generated by $F_{i'_s}$ and $   F_{i_s}$, a contradiction to our assumption that  $F_{i'_s}$ and $   F_{i_s}$  form a gap. Since $i'_s+t-1 < i_s$ and  $i'_s+t \neq  i_s$, we conclude $i'_s+t <  i_s$. Note that $F_{i'_s} \cap F_{i_s-1} =\emptyset$. Consider the following set 	
	\[
	B=\{F_{i'_1},\ldots,F_{i'_s},   F_{i_s-1} , F_{i'_{s+1}-1}\ldots,F_{i'_q-1}, F_{i'_q}\}.
	\]
 	Since $F_{i'_q}\cap F_{i'_q}= \emptyset $, it follows immediately that $F_{i'_q-1}\cap F_{i'_q}= \emptyset$ and $ F_{i'_q}$ forms a gap with the rest of the elements in $B$. This implies $\nu_0(\Gamma_t)=q+1$, a contradiction. Hence we conclude that $F_{i'_s}$ and $   F_{i_s}$ do not form a gap in $\Gamma_t$, and $i'_s+t =  i_s$.
 	Let $u_c= (u_b/m_{i'_s})   m_{i'_s+1}$. Then $u_b<u_c$ and $(u_c):(u_b)= (x_{i'_s+t, 0})$. Since $x_{i'_s+t, 0} \in F_{i_s}$, it follows that $(u_a):(u_b)\subset (u_c):(u_b)= (x_{i'_s+t, 0})$. It only remains to show that $u_c \in G(I^{[q]})$. Due to $F_{i'_1}< \ldots < F_{i'_s}<F_{i'_s+1}$, we conclude that  $F_{i'_s+1}$ does not intersect any facet in $\{F_{i'_1}, \ldots,  F_{i'_{s-1}}\}$. 	Since $i'_s+t =  i_s<\ldots < i_q$,  the facet $F_{i'_s+1} $ is disjoint with each of the facet in $\{F_{i'_{s+1}}, \ldots, F_{i'_q} \}=\{F_{i_{s+1}}, \ldots, F_{i_q} \}$. This shows that $u_c \in G(I^{[q]})$, as required. This completes the proof.
\end{proof}

   We point out that the linearity of the resolution concerning the $\nu_0$-th squarefree power of a $t$-path ideal needs to be specialized for path graphs, because it does not hold in general for broom graphs. We refer to \cite[Page 12]{EHHM} for a counter-example related to the 2-path ideal, or simply the edge ideal of broom graphs. 

		\section{Linearly related squarefree powers of simplicial trees}\label{Section: linearly related property}

         In this section we discuss the linearity of the first syzygy module of squarefree powers of the facet ideals attached to simplicial trees. We say that a graded ideal $I$, generated by homogeneous elements of degree $t$, is \textit{linearly related} if $\beta_{1,j}(I) = 0$ for all $j \neq 1 + t$. In particular, if $I$ is not linear related then $I$ does not have a linear resolution. A useful tool to investigate the linearly related property for a monomial ideal is provided by \cite[Corollary 2.2]{BHZ}, which we recall in Theorem \ref{Corollary 1.2}. 
        
 Let $I$ be a monomial ideal generated in degree $d$. In \cite{BHZ}, authors associated a graph $G_I$ to $I$ as follows: $V(G_I)=G(I)$, and  $\{u,v\} \in E(G_I)$ if and only if $\deg(\mathrm{lcm}(u,v))=d+1$. Moreover, for all $u,v\in G(I)$, the induced subgraph of $G_I$ on the vertex set $\{w\in V(G_I)\mid w\text{ divides } \lcm(u,v)\}$ is denoted by $G^{(u,v)}_{I}$. 
  
		\begin{Theorem}\cite[Corollary 2.2]{BHZ}\label{Corollary 1.2}
		   Let $I$ be a monomial ideal generated in degree d. Then $I$ is linearly related if and only if for all $u,v\in G(I)$ there is a path in $G^{(u,v)}_{I}$ connecting $u$ and $v$. 
		\end{Theorem}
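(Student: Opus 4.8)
The plan is to argue directly with the first syzygy module in each multidegree, rather than invoking the homology-of-lcm-lattice machinery, since the graphs $G_I^{(u,v)}$ are tailor-made for a linear-algebra computation. Write $F_0=\bigoplus_{u\in G(I)}Se_u$ with $\deg e_u=d$, let $\varphi\colon F_0\to I$ send $e_u\mapsto u$, and set $Z_1=\Ker\varphi$. First I would record that $I$ is linearly related precisely when $Z_1$ is generated in degree $d+1$: the number $\beta_{1,j}(I)$ is the number of minimal generators of $Z_1$ in degree $j$, the lowest degree in which $Z_1$ is nonzero is $d+1$, and the degree-$(d+1)$ part of $Z_1$ is spanned exactly by the linear Taylor relations $\sigma_{u,v}=\tfrac{\lcm(u,v)}{u}e_u-\tfrac{\lcm(u,v)}{v}e_v$ for the edges $\{u,v\}$ of $G_I$ (indeed, if $u\neq v$ both have degree $d$ and divide a monomial $m$ of degree $d+1$, then $\lcm(u,v)=m$, so $\deg\lcm(u,v)=d+1$). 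Writing $L$ for the submodule generated by these $\sigma_{u,v}$, the whole problem reduces to deciding when $L=Z_1$.

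Next I would fix a multidegree $m$ and compute both sides there. Let $V_m=\{u\in G(I):u\mid m\}$ and $\epsilon_u=\tfrac{m}{u}e_u$; then $(F_0)_m$ has $K$-basis $\{\epsilon_u:u\in V_m\}$ and $\varphi$ sends every $\epsilon_u$ to the single monomial $m$, so $(Z_1)_m$ is the space of coefficient vectors summing to zero, of dimension $|V_m|-1$ and spanned by the differences $\epsilon_u-\epsilon_v$. On the other hand, pushing $\sigma_{u,v}$ into multidegree $m$ gives $\tfrac{m}{\lcm(u,v)}\sigma_{u,v}=\epsilon_u-\epsilon_v$, so $L_m$ is the span of $\{\epsilon_u-\epsilon_v\}$ over those edges $\{u,v\}$ of $G_I$ with $\lcm(u,v)\mid m$. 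Introducing the graph $H_m$ on vertex set $V_m$ whose edges are exactly these pairs, the standard fact that the span of the edge-difference vectors of a graph has dimension $|V(H_m)|-c(H_m)$, with $c$ the number of connected components, gives $L_m=(Z_1)_m$ if and only if $H_m$ is connected. Hence $L=Z_1$ if and only if $H_m$ is connected for every $m$.

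It then remains to match ``$H_m$ connected for all $m$'' with the stated path condition. The key observation is that for $m=\lcm(u,v)$ one has $H_m=G_I^{(u,v)}$: the vertex sets $\{w:w\mid\lcm(u,v)\}$ agree, and for such $w,w'$ the relation $w,w'\mid\lcm(u,v)$ forces $\lcm(w,w')\mid\lcm(u,v)$, so the edge set is precisely that of the induced subgraph $G_I^{(u,v)}$. This yields the forward direction at once, since $G_I^{(u,v)}=H_{\lcm(u,v)}$ is then connected and so contains a path from $u$ to $v$. For the converse I would exploit monotonicity in $m$: whenever $u,v\in V_m$ we have $\lcm(u,v)\mid m$, hence $G_I^{(u,v)}=H_{\lcm(u,v)}$ is a subgraph of $H_m$ (both its vertices and its edges persist as $m$ grows), so a path joining $u$ and $v$ in $G_I^{(u,v)}$ is also a path in $H_m$; as $u,v\in V_m$ were arbitrary, $H_m$ is connected.

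I expect the main obstacle to be the multidegree-wise reduction itself, namely the cut-space dimension count of the second paragraph, which is where the passage from ``generated in degree $d+1$'' to ``$H_m$ connected'' actually happens; the one genuinely delicate point beyond routine bookkeeping is the converse of the final equivalence, where one must confirm that enlarging $m$ never destroys an edge of $H_m$ and that no multidegree $m$ outside those of the form $\lcm(u,v)$ can secretly break connectivity. Both are handled by the subgraph monotonicity above, which is why I would state and use it explicitly.
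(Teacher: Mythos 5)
The paper does not actually prove this statement: it is quoted verbatim from \cite[Corollary 1.2]{BHZ}, so there is no internal proof to compare against. Your argument, however, is a correct and complete self-contained proof. The multidegree-by-multidegree computation is sound: $(Z_1)_m$ is indeed the hyperplane of coefficient vectors summing to zero on $V_m=\{u\in G(I):u\mid m\}$, the image of the linear Taylor relations in multidegree $m$ is exactly the span of the edge-difference vectors of your graph $H_m$, and the rank formula $|V(H_m)|-c(H_m)$ correctly converts ``generated in degree $d+1$'' into ``$H_m$ connected for all $m$''. The two reductions that could have been gaps --- that $H_{\lcm(u,v)}$ coincides with the induced subgraph $G^{(u,v)}_I$ (because $w,w'\mid m$ already forces $\lcm(w,w')\mid m$), and that $H_{m'}$ sits inside $H_m$ whenever $m'\mid m$ so that connectivity need only be tested at multidegrees of the form $\lcm(u,v)$ --- are both stated and justified.

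As for comparison of methods: your route replaces the homological machinery with an explicit syzygy computation, but it is essentially the linear-algebra shadow of the argument via Gasharov--Peeva--Welker that \cite{BHZ} uses and that this paper itself deploys in the proof of Theorem~\ref{theorem: betti number t-path ideal}, where $\beta_{1,m}(I)=\dim_K\tilde{H}_0(\Delta((1,m));K)$ and one checks connectivity of the comparability graph of the open interval $(1,m)$. Concretely, your quotient $(Z_1)_m/L_m$ has dimension $c(H_m)-1$, which is exactly the reduced $0$-th homology count of components appearing in the lattice approach; the atoms of $(1,m)$ are the elements of $V_m$, and comparability chains through the lattice correspond to paths in $H_m$. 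What your version buys is elementarity and transparency (no appeal to order complexes or to the lcm-lattice theorem), at the cost of redoing by hand, for homological degree one only, a computation that the GPW result packages for all homological degrees; the lattice formulation is the one that generalizes to higher Betti numbers, which is why the paper uses it in Section~\ref{Section: linearly related property}.
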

	
		 In \cite[Lemma 5.2]{BHZ}, authors showed that if $I$ is the edge ideal of a simple graph, then $I^{[k]}$ is not linearly related for any $1 \leq k < \nu_0(G)$. This result cannot be extended even for the facet ideal of an arbitrary $2$-dimensional simplicial complex as observed in the following example. 
  
            \begin{Example}\em
   Let $\Delta$ be the simplicial complex whose facet ideal is
   \[I(\Delta)=(x_1x_2x_3, x_4x_5x_6, x_7x_8x_9, 
x_4x_5x_7,x_2x_4x_8,x_3x_5x_7, x_4x_8x_9, x_5x_6x_7,\]  
\[x_1x_4x_7, x_2x_5x_8, x_3x_6x_9, 
x_4x_7x_9, x_6x_7x_9, x_6x_8x_9, x_4x_6x_9) \]
The set $M=\{ \{1,2,3\}, \{4,5,6\}, \{7,8,9\}\}$ is a restricted matching of $\Delta$ because $\{1,2,3\}$ makes a gap with the rest of the facets in $M$. One can verify that there does not exists any restricted matching of $\Delta$ of size bigger than three.  It gives $\nu_0(\Delta)=3$. With \texttt{Macaulay2} \cite{M2}, we see that $I(\Delta)^{[2]}$ is linearly related.
		\end{Example}
  
Now, we prove an analogue of \cite[Lemma 5.2]{BHZ} for pure simplicial forests. Together with Proposition \ref{prop:difference of matchings}, it basically gives a necessary condition (stated in Corollary~\ref{cor:nec}) for the squarefree powers of the facet ideals of a pure simplicial forest to have a linear resolution.
  
		\begin{Theorem}\label{thm: k-squarefree power non-linearly related}
			Let $\Delta$ be a pure simplicial forest with $\dim(\Delta)>0$. Then $I(\Delta)^{[k]}$ is not linearly related and, hence, does not have a linear resolution for all $1 \leq  k < \nu_0(\Delta)$.
		\end{Theorem}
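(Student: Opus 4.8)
The plan is to prove that $I(\Delta)^{[k]}$ is not linearly related for $1 \leq k < \nu_0(\Delta)$ by exhibiting, for a suitable pair of generators $u, v \in G(I(\Delta)^{[k]})$, a violation of the criterion in Theorem~\ref{Corollary 1.2}; that is, I will find $u,v$ such that no path in $G^{(u,v)}_{I(\Delta)^{[k]}}$ connects $u$ and $v$. Since $k < \nu_0(\Delta)$, there exists a restricted matching of size $k+1$, say $M_0 = \{E_0, E_1, \ldots, E_k\}$, in which $E_0$ forms a gap with every other $E_i$. The natural candidates are the generators $u = f_{E_1}\cdots f_{E_k}$ (the matching $\{E_1, \ldots, E_k\}$) and $v = f_{E_0}f_{E_2}\cdots f_{E_k}$ (the matching $\{E_0, E_2, \ldots, E_k\}$), obtained by swapping $E_0$ for $E_1$. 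Both are genuine elements of $G(I(\Delta)^{[k]})$ because any $k$-subset of $M_0$ is a $k$-matching.

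The heart of the argument is to understand which generators $w$ of $I(\Delta)^{[k]}$ divide $\lcm(u,v)$, since $G^{(u,v)}$ is the induced subgraph on exactly those $w$. Writing $\lcm(u,v) = f_{E_0}f_{E_1}f_{E_2}\cdots f_{E_k}$, note this monomial has support $E_0 \cup E_1 \cup \cdots \cup E_k$ and (using that the $E_i$ are pairwise disjoint, so the product is squarefree) any generator $w = f_{G_1}\cdots f_{G_k}$ dividing it corresponds to a $k$-matching $\{G_1, \ldots, G_k\}$ all of whose facets lie inside $\bigcup_{i=0}^{k} E_i$. Here I would invoke purity: since $\Delta$ is pure of dimension $d$, each facet has exactly $d+1$ vertices, so $\lcm(u,v)$ has degree $(k+1)(d+1)$ while $w$ has degree $k(d+1)$; thus the $k$ facets $G_j$ use up the vertices of exactly $k$ of the $E_i$'s worth of vertices, leaving $d+1$ uncovered. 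The key structural claim, which I expect to follow from the gap hypothesis together with Proposition~\ref{M=N} (uniqueness of matchings with a given vertex union), is that the only such $w$ are the ``obvious'' ones: those obtained by dropping one $E_i$ and keeping the rest, i.e. each $w$ is again a sub-$k$-matching of $M_0$. The gap condition is essential here because $E_0$ forming a gap with each $E_i$ means no facet of $\Delta$ can straddle $E_0$ and another block, which blocks the existence of ``mixed'' matchings inside the union.

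Once this structural description is in place, I would analyze the graph $G^{(u,v)}$ on these vertices. Two sub-$k$-matchings of $M_0$ are adjacent in $G_{I(\Delta)^{[k]}}$ only when their lcm has degree $k(d+1)+1$; but two distinct $k$-subsets of the $(k+1)$-element set $M_0$ differ by swapping one facet for another, and since all facets have $d+1$ vertices and are pairwise disjoint, the lcm of the two corresponding generators jumps in degree by a full $d+1 > 1$ (as $\dim \Delta > 0$ forces $d \geq 1$). Hence \emph{no} two of these vertices are adjacent: $G^{(u,v)}$ is a graph with no edges at all, so in particular $u$ and $v$ lie in distinct connected components and there is no path between them. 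By Theorem~\ref{Corollary 1.2}, $I(\Delta)^{[k]}$ is not linearly related, as desired.

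The main obstacle I anticipate is rigorously establishing the structural claim that every generator dividing $\lcm(u,v)$ is a sub-$k$-matching of $M_0$, rather than some unexpected $k$-matching assembled from facets of $\Delta$ that happen to sit inside the union $\bigcup E_i$. This is where the full strength of the simplicial-forest hypothesis must enter: I would argue that any facet $G$ of $\Delta$ contained in $\bigcup_{i} E_i$ must in fact equal one of the $E_i$, by passing to the induced subcomplex on $\bigcup E_i$ and using that the gap condition forces this induced subcomplex to be (at least on the relevant blocks) the disjoint union $\langle E_0, E_1, \ldots, E_k\rangle$; the gap definition gives exactly this for the pairs involving $E_0$, and a forest/leaf argument analogous to Proposition~\ref{M=N} should handle the remaining facets and rule out nontrivial recombinations. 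If a facet could be properly contained in $\bigcup E_i$ without equaling some $E_i$, purity would be violated, so purity does real work in closing this gap as well.
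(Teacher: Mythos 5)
Your overall strategy (exhibit $u,v$ differing by the swap of the gap-forming facet and show they are disconnected in $G^{(u,v)}_{I(\Delta)^{[k]}}$, then apply Theorem~\ref{Corollary 1.2}) is the same as the paper's, but the central structural claim on which your argument rests is false, and the conclusion you draw from it is also false. You claim that every generator $w$ of $I(\Delta)^{[k]}$ dividing $\lcm(u,v)$ is a sub-$k$-matching of $M_0=\{E_0,\dots,E_k\}$, and hence that $G^{(u,v)}$ has no edges at all. Take $\Delta$ to be the path graph $P_7$ viewed as a pure $1$-dimensional simplicial tree, with $E_0=\{1,2\}$, $E_1=\{4,5\}$, $E_2=\{6,7\}$ a restricted matching in which $E_0$ forms a gap with $E_1$ and $E_2$ (so $\nu_0=3$ and $k=2$ is admissible). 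Then $u=x_4x_5x_6x_7$, $v=x_1x_2x_6x_7$, and $w=x_1x_2\cdot x_5x_6$ is a generator of $I(\Delta)^{[2]}$ dividing $\lcm(u,v)=x_1x_2x_4x_5x_6x_7$ that is \emph{not} a sub-matching of $M_0$: the facet $\{5,6\}$ straddles $E_1$ and $E_2$. (The gap hypothesis only controls pairs involving $E_0$; nothing prevents the remaining blocks from being joined by other facets, and purity is irrelevant here since a straddling facet has the correct dimension.) Moreover $\deg\lcm(v,w)=5=2\cdot 2+1$, so $\{v,w\}$ \emph{is} an edge of $G^{(u,v)}$; your claim that no two vertices are adjacent fails. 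The ideal is still not linearly related in this example, but only because $u$ itself happens to be isolated --- which is exactly what has to be proved, and your argument does not prove it.

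The paper closes this gap with a genuinely local argument that your proposal lacks. It only needs to rule out neighbors of $u$: if $w$ is adjacent to $u$ in $G^{(u,v)}$, then $\deg\lcm(u,w)=nk+1$ forces $\supp(w)=\{x\}\cup(\supp(u)\setminus\{y\})$ for some $x$ in the gap-forming facet and some $y\in\supp(u)$. Writing $w$ as a product over a $k$-matching $\{G_1,\dots,G_k\}$ and comparing the two collections $A=\{G_1\setminus\{x\},G_2,\dots,G_k\}$ and $B=\{F_1\setminus\{y\},F_2,\dots,F_k\}$, the bipartite-graph/no-cycle argument of Proposition~\ref{M=N} (adapted to these truncated sets, using the simplicial-forest hypothesis) forces $A=B$, hence $G_1=(F_1\setminus\{y\})\cup\{x\}$ lies in the induced subcomplex on $F_1\cup F_{k+1}$, contradicting the gap. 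This is the step where the forest structure and the gap condition actually do the work, and it is precisely the step your proposal replaces with an incorrect global description of $G^{(u,v)}$. To repair your proof you would need to abandon the claim that $G^{(u,v)}$ is edgeless and instead carry out this one-vertex-perturbation analysis for the neighbors of $u$.
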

		
		\begin{proof}
			Let $k$ be an integer with $1 \leq  k < \nu_0(\Delta)$. In particular, this implies that $\nu_0(\Delta)\geq 2$.
            Let $\dim(\Delta)=n-1>0$ and $M=\{F_1,F_2,\ldots,F_{{\nu}_0(\Delta)}\}$  be a restricted matching of $\Delta$. Set $I=I(\Delta)$. Let $f_i=\prod_{j\in F_i}x_j$ and set $u=f_1f_2\cdots f_k$ and $v=f_2f_3\ldots f_{k+1}$ for $1 \leq k<{\nu}_0(\Delta)$ such that $F_{k+1}$ forms a gap with rest of the elements in $M$. By virtue of Theorem~\ref{Corollary 1.2} it is enough to show that $u$ and $v$ are disconnected in $G^{(u,v)}_{I^{[k]}}$.
			
			Note that $\{u,v\}$ is not an edge in $G^{(u,v)}_{I^{[k]}}$ because $\deg(\lcm(u,v))=kn+n>nk+1$. Suppose that $u$ and $v$ are connected in $G^{(u,v)}_{I^{[k]}}$. Then there exists some $w \in G(I^{[k]})$ such that $\{w,u\} $ is an edge in $G^{(u,v)}_{I^{[k]}}$ and $\deg(\lcm(u,w))= nk+1$. Since $w$ divides $\lcm(u,v)$ and $\deg(w)= nk$, there exist some $x \in F_{k+1}\subset \supp(v)$ and $y \in \supp (u)$ such that $\supp(w)=\{x\} \cup (\supp(u)\setminus \{y\})$. Let $w=w_1\cdots w_k$ with $G_i=\supp(w_i) \in \F(\Delta)$ for $i=1, \ldots, k$. After a relabelling of vertices, we may assume that $x \in G_1$ and $y\in F_1$. Let $G'_1=G_1\setminus\{x\}$ and $F'_1=F_1\setminus\{y\}$, and set
   $A=\{G'_1, G_2, \ldots, G_k\}$ and $B=\{F'_1, F_2, \ldots, F_k\}$. Observe that $G'_1 \not\subseteq F_\ell$, for all $\ell=2, \ldots, k$. Indeed, if  $G'_1 \subseteq F_\ell$ for some $\ell$, then $G_1$ belongs to the induced subcollection on $F_\ell \cup F_{k+1}$, which is a contradiction to the assumption that $F_{k+1}$ forms a gap with $F_\ell$.   
   
   We claim that $A=B$ and consequently $G'_1= F'_1$. To see this, we apply the similar argument as in the proof of Proposition~\ref{M=N}. Note that the elements of $A$ are pairwise disjoint and the elements of $B$ are pairwise disjoint as well. Moreover, the union of elements of $A$ coincides with the union of elements in $B$. Assume that $A\neq B$, and without loss of generality, we may assume that $A \cap B = \emptyset$. Consider the bipartite graph $H$ on the vertex set $A \cup B$ such that two vertices of $H$ are adjacent if and only if their intersection as facets of $\Delta$ is non-empty. Since $G'_1 \not\subseteq F_\ell$ for any $\ell$, we see that degree of $G'_1$ in $H$ is at least two. Moreover, due to $A \cap B = \emptyset$ we obtain that degrees of $G_2, \ldots G_k, F_2, \ldots, F_k$ are also at least two. Therefore, $H$ may have at most one vertex of degree one, namely, $F'_1$. This show that $H$ is not a forest and it contains a cycle. Let $C$ be the vertex set of a cycle in $H$. If $G'_1$ or $F'_1$ appears in $C$, then we replace them by $G_1$ and $F_1$, respectively. As argued in the proof of Proposition~\ref{M=N}, we see that the subcollection of $\Delta$ with facets in $C$ does not have a leaf, which is a contradiction to $\Delta$ being a simplicial tree. Therefore, $A=B$, and consequently $G_1\setminus\{x\}= F_1\setminus\{y\}$. This shows that $G_1$ belongs to the induced subcollection on $F_1 \cup F_{k+1}$, a contradiction to the assumption that $F_{k+1}$ forms a gap with $F_1$.

   From above argument we see that there does not exist any $w \in G^{(u,v)}_{I^{[k]}}$ adjacent to $u$. Therefore, $G^{(u,v)}_{I^{[k]}}$ is disconnected, as claimed.
\end{proof}

    \begin{Corollary}\label{cor:nec}
        Let $\Delta$ be a pure simplicial forest with $\dim(\Delta)>0$. If $I(\Delta)^{[k]}$ has a linear resolution then $k=\nu(\Delta)-1$ or $k=\nu(\Delta)$. 
    \end{Corollary}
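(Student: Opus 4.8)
The plan is to derive this Corollary directly from the two preceding results, namely Proposition~\ref{prop:difference of matchings} and Theorem~\ref{thm: k-squarefree power non-linearly related}, by a short logical argument. The key observation is that having a linear resolution is a strictly stronger condition than being linearly related: if $I(\Delta)^{[k]}$ has a linear resolution, then in particular $\beta_{1,j}\left(I(\Delta)^{[k]}\right)=0$ for all $j\neq 1+kn$ (where $n-1=\dim\Delta$, so the generators have degree $kn$), which is exactly the statement that $I(\Delta)^{[k]}$ is linearly related. Thus a linear resolution forces the linearly related property.

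First I would record this implication. Then I would invoke Theorem~\ref{thm: k-squarefree power non-linearly related}, which asserts that for a pure simplicial forest with $\dim(\Delta)>0$, the power $I(\Delta)^{[k]}$ fails to be linearly related for every $k$ in the range $1\leq k<\nu_0(\Delta)$. Contrapositively, if $I(\Delta)^{[k]}$ is linearly related (in particular if it has a linear resolution), then we must have $k\geq \nu_0(\Delta)$. Combined with the standing constraint $k\leq \nu(\Delta)$ coming from the fact that $I(\Delta)^{[k]}\neq 0$ only for $1\leq k\leq\nu(\Delta)$, we obtain $\nu_0(\Delta)\leq k\leq \nu(\Delta)$.

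The final step is to apply Proposition~\ref{prop:difference of matchings}, which gives $\nu(\Delta)-\nu_0(\Delta)\leq 1$ for any simplicial forest. Hence the interval $[\nu_0(\Delta),\nu(\Delta)]$ of admissible values of $k$ contains at most two integers, namely $\nu(\Delta)$ and possibly $\nu(\Delta)-1$. Therefore $k=\nu(\Delta)$ or $k=\nu(\Delta)-1$, which is precisely the claim. One should note the edge case $\dim(\Delta)=0$: here the facets are single vertices, $I(\Delta)$ is generated by variables, and $\nu(\Delta)=\nu_0(\Delta)$ equals the number of vertices, so every nonzero squarefree power has a linear resolution and the conclusion $k\in\{\nu(\Delta)-1,\nu(\Delta)\}$ is only meaningful once $\dim(\Delta)>0$; I would either assume $\dim(\Delta)>0$ or remark that the degenerate case is trivial.

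I do not anticipate a genuine obstacle here, since all the substantive work has already been done in the two cited results; the Corollary is essentially a bookkeeping combination of ``linear resolution $\Rightarrow$ linearly related'' with the numerical bounds $k<\nu_0(\Delta)\Rightarrow\text{not linearly related}$ and $\nu(\Delta)-\nu_0(\Delta)\leq 1$. The only point requiring a little care is making explicit that a linear resolution implies the linearly related property (so that Theorem~\ref{thm: k-squarefree power non-linearly related} applies), and confirming the purity hypothesis is inherited so that both cited statements are available.
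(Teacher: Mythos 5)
Your proposal is correct and follows exactly the route the paper takes: the paper's proof is a one-line citation of Proposition~\ref{prop:difference of matchings} and Theorem~\ref{thm: k-squarefree power non-linearly related}, combined via precisely the chain ``linear resolution $\Rightarrow$ linearly related $\Rightarrow$ $k\geq\nu_0(\Delta)$'' together with $\nu(\Delta)-\nu_0(\Delta)\leq 1$ that you spell out. Your extra remarks on the $\dim(\Delta)=0$ edge case and on the purity hypothesis are sensible housekeeping but not a different argument.
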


    \begin{proof}
        It follows from Proposition \ref{prop:difference of matchings} and Theorem \ref{thm: k-squarefree power non-linearly related}. 	 \end{proof}
  	
   Next, we show that for any pure simplicial tree $\Delta$, if $I(\Delta)^{[k]}$ is linearly related then $I(\Delta)^{[k+1]}$ is also linearly related. Hence, if the highest squarefree power $I(\Delta)^{[\nu(\Delta)]}$ is not linearly related then $I(\Delta)^{[k]}$ cannot be linearly related for all $1\leq k\leq \nu(\Delta)$, in particular $I(\Delta)^{[k]}$ cannot have a linear resolution for all $1\leq k\leq \nu(\Delta)$. To show this, we first prove the following lemma. 
   
		\begin{Lemma}\label{lem:int}
			Let $\Delta$ be a simplicial tree. Further, let $M=\{F_1, \ldots, F_s\}$ and $N=\{G_1, \ldots, G_s\}$ be two $s$-matchings of $\Delta$. Then there exist $i,j \in \{1,\dots,s\}$ such that $F_i \cap G_k = \emptyset$ for all $k\neq j$.
		\end{Lemma}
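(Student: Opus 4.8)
The plan is to argue by induction on $s$, using the same leaf-versus-branch mechanism that drives the proof of Proposition~\ref{M=N}. The base case $s=1$ is vacuous: the only index is $k=1$, and upon taking $j=1$ the condition ``$F_i\cap G_k=\emptyset$ for all $k\neq j$'' imposes nothing. For the inductive step I would first dispose of the situation $M\cap N\neq\emptyset$: if $F_i=G_j$ for some pair, then $F_i\cap G_k=G_j\cap G_k=\emptyset$ for every $k\neq j$ because $N$ is a matching, and we are done. Hence I may assume that $M$ and $N$ share no facet, so that $\Delta'=\langle M\cup N\rangle$ has exactly $2s$ distinct (pairwise incomparable, by maximality of facets in $\Delta$) facets, i.e.\ $\mathcal{F}(\Delta')=M\cup N$.

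The key step, which I expect to be the heart of the argument, is the following local claim: \emph{if $L$ is a leaf of $\Delta'$ and $L\in M$, then $L$ meets at most one facet of $N$}; symmetrically, a leaf lying in $N$ meets at most one facet of $M$. To prove it I would let $B$ be a branch of $L$, so $H\cap L\subseteq B\cap L$ for every facet $H\neq L$ of $\Delta'$, and suppose $L$ met two distinct facets $G_{k_1},G_{k_2}\in N$. Since $N$ is a matching, $L\cap G_{k_1}$ and $L\cap G_{k_2}$ are nonempty and disjoint, yet both lie in $B\cap L$. Now $B\in M\cup N$. If $B\in M$, then $B\cap L=\emptyset$ because $M$ is a matching, contradicting nonemptiness. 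If $B=G_m\in N$, then $L\cap G_{k_1}\subseteq G_m$ together with $L\cap G_{k_1}\subseteq G_{k_1}$ forces $G_m\cap G_{k_1}\neq\emptyset$, hence $m=k_1$; the same reasoning with $G_{k_2}$ forces $m=k_2$, which is impossible since $k_1\neq k_2$. This contradiction proves the claim.

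With the claim in hand, I would invoke the definition of a simplicial tree to produce a leaf $L$ of the nonempty subcomplex $\Delta'$. If $L\in M$, the claim finishes the proof at once. If instead $L=G_s\in N$ (after relabelling), the symmetric form of the claim shows that $G_s$ meets at most one facet of $M$; I then let $F^\ast$ be that facet if it exists and an arbitrary facet of $M$ otherwise, and apply the induction hypothesis to the $(s-1)$-matchings $M\setminus\{F^\ast\}$ and $N\setminus\{G_s\}$. This yields a facet $F_{i'}\in M\setminus\{F^\ast\}$ meeting at most one facet of $N\setminus\{G_s\}$. Since $G_s$ meets only $F^\ast$ (or nothing) and $F_{i'}\neq F^\ast$, we get $F_{i'}\cap G_s=\emptyset$, so $F_{i'}$ meets at most one facet of $N$, as required.

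The main obstacle I anticipate is the bookkeeping of the reduction rather than the geometry. The statement genuinely fails when $M$ and $N$ have different sizes (a single large facet can meet two small disjoint ones), so the induction must strip one facet from each of $M$ and $N$ \emph{simultaneously}, and the choice of $F^\ast$ must be coordinated with the leaf $G_s$ precisely so that the facet returned by the induction hypothesis stays disjoint from $G_s$. The only other point needing care is the identity $\mathcal{F}(\Delta')=M\cup N$, which guarantees that the branch $B$ is forced to lie in $M\cup N$; this follows immediately from the fact that distinct facets of $\Delta$ are incomparable.
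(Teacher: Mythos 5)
Your proof is correct, but it takes a genuinely different route from the paper's. The paper argues by contradiction through the bipartite intersection graph on $M\cup N$ (edges recording nonempty intersections $F_i\cap G_j$): if every $F_i$ met at least two facets of $N$, that graph would contain a cycle, and the subcomplex spanned by the facets along the cycle would have no leaf, contradicting that $\Delta$ is a simplicial tree --- exactly the mechanism of Proposition~\ref{M=N}. You instead induct on $s$ and prove the local claim that any leaf of $\Delta'=\langle M\cup N\rangle$ belonging to $M$ meets at most one facet of $N$ (and symmetrically), using the branch dichotomy: a branch in $M$ forces an empty intersection, while a branch $G_m\in N$ forces both intersecting facets of $N$ to equal $G_m$. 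Your key claim, the identification $\mathcal{F}(\Delta')=M\cup N$ (from incomparability of distinct facets of $\Delta$), and the bookkeeping in the inductive step --- removing the leaf $G_s$ together with the unique facet $F^{\ast}$ of $M$ it can meet, so that the facet returned by the induction hypothesis is automatically disjoint from $G_s$ --- all check out. The paper's argument is shorter and reuses the cycle-extraction template already set up for Proposition~\ref{M=N}; yours avoids the graph-theoretic detour, is more constructive in flavor (it amounts to peeling leaves off $\langle M\cup N\rangle$), and isolates a reusable local fact about leaves and matchings. Both ultimately rest on the same leaf/branch analysis.
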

		
		\begin{proof}
			If $F_i=G_j$ for any $i, j \in \{1,\dots,s\}$, then the assertion holds trivially. Assume that $F_i \neq G_j$, for all $i,j \in \{1,\dots,s\}$, that is, $M \cap N = \emptyset$. Let $H$ be the bipartite graph with $V(H)=M\cup N$ and $E(H)=\{ \{F_i, G_j\} : F_i \cap G_j \neq \emptyset, i,j \in \{1,\dots,s\}\}$. On contrary, assume that for each $i\in \{1,\dots,s\}$ there exist at least two $p,q\in\{1,\dots,s\}$ such that $F_i \cap G_p \neq \emptyset$ and $F_i \cap G_q \neq \emptyset$.  Then each vertex in $H$ has degree at least two. Therefore, $H$ contains an even cycle. After rearranging the indices, we may assume that $F_1, G_1, F_2, G_2, \ldots, G_t , F_{t+1}=F_1$ is a cycle of length $t$ in $H$. 
			
			Consider the subcollection $\Delta'= \langle F_1, \ldots, F_t, G_1, \ldots, G_t \rangle$. Since $M$ and $N$ are matching of $\Delta$, it follows that the sets $F_i \cap G_i$ and $G_i\cap  F_{i+1}$ are distinct for all $i=1, \ldots, t$. This shows that the subcollection $\Delta'= \langle F_1, \ldots, F_t, G_1, \ldots, G_t \rangle \subset \Delta$ has no leaf, and $\Delta$ is not a simplicial tree, a contradiction.
		\end{proof}
		
		\begin{Theorem}\label{theo:linearly related}
			Let $\Delta$ be a pure simplicial tree. If $I(\Delta)^{[k]}$ is linearly related then $I(\Delta)^{[k+1]}$ is also linearly related. 
		\end{Theorem}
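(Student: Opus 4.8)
The plan is to verify the criterion of Theorem~\ref{Corollary 1.2}. Since $\Delta$ is pure of dimension $n-1$, every facet has $n$ vertices, so $I^{[k]}$ and $I^{[k+1]}$ are generated in the single degrees $nk$ and $n(k+1)$; hence it suffices to show that for every pair $u,v\in G(I^{[k+1]})$ there is a path joining $u$ and $v$ in $G^{(u,v)}_{I^{[k+1]}}$. By Corollary~\ref{cor:M=N1}, I write $u=f_1\cdots f_{k+1}$ and $v=g_1\cdots g_{k+1}$, where $f_i=\prod_{x\in F_i}x$ and $g_\ell=\prod_{x\in G_\ell}x$ come from uniquely determined $(k+1)$-matchings $M=\{F_1,\dots,F_{k+1}\}$ and $N=\{G_1,\dots,G_{k+1}\}$. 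Applying Lemma~\ref{lem:int} to $M$ and $N$ yields indices which, after relabelling, I may take to be $k+1$; that is, $F_{k+1}\cap G_\ell=\emptyset$ for all $\ell\in\{1,\dots,k\}$. Because $M$ is a matching, also $F_{k+1}\cap F_i=\emptyset$ for $i\in\{1,\dots,k\}$.

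The key device is a lifting principle. If $T$ is a facet of $\Delta$ whose vertex set is disjoint from $\supp(\lcm(p,q))$ for two generators $p,q\in G(I^{[k]})$, then multiplication by $f_T=\prod_{x\in T}x$ carries any path from $p$ to $q$ in $G^{(p,q)}_{I^{[k]}}$ to a path from $f_Tp$ to $f_Tq$ in $G^{(f_Tp,\,f_Tq)}_{I^{[k+1]}}$. Indeed, each monomial $w$ on such a path corresponds to a $k$-matching whose union lies in $\supp(\lcm(p,q))$ and is therefore disjoint from $T$, so $f_Tw\in G(I^{[k+1]})$ by Corollary~\ref{cor:M=N1}; an edge $\deg\lcm(w,w')=nk+1$ becomes $\deg\lcm(f_Tw,f_Tw')=n(k+1)+1$, preserving adjacency; and divisibility of $\lcm(p,q)$ is promoted to divisibility of $f_T\lcm(p,q)$.

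I then connect $u$ to $v$ in two stages. First, I set $u'=f_1\cdots f_k$ and $v'=g_1\cdots g_k$, both in $G(I^{[k]})$. By the disjointness recorded above, $F_{k+1}$ is disjoint from $\supp(\lcm(u',v'))=(\bigcup_{i\le k}F_i)\cup(\bigcup_{\ell\le k}G_\ell)$; since $I^{[k]}$ is linearly related, Theorem~\ref{Corollary 1.2} gives a path $u'\to v'$ in $G^{(u',v')}_{I^{[k]}}$, and the lifting principle with $T=F_{k+1}$ produces a path $u=f_{k+1}u'\to w:=f_{k+1}v'$, all of whose monomials divide $f_{k+1}\lcm(u',v')$ and hence $\lcm(u,v)$. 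Second, I connect $w=f_{k+1}g_1\cdots g_k$ to $v=g_1\cdots g_{k+1}$: if $F_{k+1}=G_{k+1}$ then $w=v$, and otherwise I remove the common facet $G_k$ and set $\tilde w=w/g_k$, $\tilde v=v/g_k$, which are generators of $I^{[k]}$ for the $k$-matchings $\{F_{k+1},G_1,\dots,G_{k-1}\}$ and $\{G_1,\dots,G_{k-1},G_{k+1}\}$. Here $G_k$ is disjoint from $\supp(\lcm(\tilde w,\tilde v))=F_{k+1}\cup G_{k+1}\cup(\bigcup_{\ell\le k-1}G_\ell)$ because $N$ is a matching and $F_{k+1}\cap G_k=\emptyset$; applying the lifting principle with $T=G_k$ to a path $\tilde w\to\tilde v$ yields a path $w\to v$ inside $G^{(u,v)}_{I^{[k+1]}}$. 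Concatenating the two stages joins $u$ and $v$, so $I^{[k+1]}$ is linearly related.

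I expect the main obstacle to be the bookkeeping that keeps every lifted path inside the induced subgraph $G^{(u,v)}_{I^{[k+1]}}$: one must check simultaneously that each lifted monomial is a genuine $(k+1)$-matching generator, that the degree jump by $n$ preserves edges, and---most delicately---that the supports never leave $\supp(\lcm(u,v))$. The two stages are arranged precisely so that the ``extra'' facet ($F_{k+1}$, then $G_k$) is disjoint from the relevant least common multiple, which is exactly what Lemma~\ref{lem:int} together with the matching conditions on $M$ and $N$ guarantee.
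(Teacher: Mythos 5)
Your proposal is correct and follows essentially the same route as the paper: apply Lemma~\ref{lem:int} to isolate a facet $F_{k+1}$ disjoint from $G_1,\dots,G_k$, lift a path between $f_1\cdots f_k$ and $g_1\cdots g_k$ in $G^{(u',v')}_{I^{[k]}}$ by multiplying through with $f_{k+1}$, and then repeat the lift a second time after factoring out a common $g_\ell$ to travel from $f_{k+1}g_1\cdots g_k$ to $v$. Your explicitly stated ``lifting principle'' is just a clean packaging of the verification the paper carries out twice (membership in $G(I^{[k+1]})$ via Corollary~\ref{cor:M=N1}, preservation of the degree-$(nk+1)$ edge condition, and divisibility of $\lcm(u,v)$), so the two arguments are the same in substance.
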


		\begin{proof}
			Let $I=I(\Delta)$ and $u,v \in G(I^{[k+1]})$ with $u=f_1 \cdots f_{k+1}$ and $v=g_1 \cdots g_{k+1}$  and $f_1, \ldots, f_{k+1}, g_1, \ldots, g_{k+1} \in G(I)$. Following \cite[Corollary 1.2]{BHZ}, it is enough to show that $u$ and $v$ are connected by a path in $G_{I^{[k+1]}}^{(u,v)}$. Let $F_i=\supp(f_i)$ and $G_i=\supp(g_i)$, for all $i=1, \ldots, k+1$. Then $M=\{F_1, \ldots, F_{k+1}\}$ and $N=\{G_1, \ldots, G_{k+1}\}$ are $k$-matching of $\Delta$. 
			
			It follows from Lemma~\ref{lem:int} that there exist $i,j \in \{1,\dots,k+1\}$ such that $F_i \cap G_t = \emptyset$ for all $t\neq j$. After rearranging the indices, we may assume that $i=j=k+1$. Then $F_{k+1}\cap (G_1\cup \ldots \cup G_k)= \emptyset$. Since $I^{[k]}$ is linearly related, the monomials $ u'=f_1\cdots f_k$ and $v'=g_1  \cdots g_k$  are connected by a path, say $P_1:u'=w_0, w_1, \ldots, w_s=v'$, in $G_{I^{[k]}}^{(u',v')}$. Since $w_i$ divides $\lcm(u',v')$, we obtain $\supp(w_i) \cap F_{k+1}= \emptyset$ and $w_if_{k+1} \in G(I^{[k+1]})$, for all $i=0, \ldots, s$. Moreover, $w_i f_{k+1}$ divides $\lcm(u,v)$ and hence $w_if_{k+1} \in G_{I^{[k+1]}}^{(u,v)}$. This gives a path 
			\[
			Q_1: u=w_0f_{k+1}, w_1f_{k+1}, \ldots, w_sf_{k+1}=v'f_{k+1}
			\]
			in $G_{I^{[k+1]}}^{(u,v)}$.
			Proceeding in a similar way, we construct a path from $u''= g_2\cdots g_k f_{k+1}$ to $v''=g_2\cdots g_k g_{k+1}$ in $G_{I^{[k]}}^{(u'',v'')}$ which provides a path $Q_2$ from $g_1u''=v'f_{k+1}$ to $g_1v''=v$ in $G_{I^{[k+1]}}^{(u,v)}$. Joining $Q_1$ and $Q_2$ gives us a path connecting $u$ and $v$ in $G_{I^{[k+1]}}^{(u,v)}$, as required. 
		\end{proof}

         We conclude this section with a description of the degrees of the vanishing graded Betti numbers with homological degree one. An application of this result will be provided in Proposition \ref{prop: regularity (nu-1)-squarefree power of line graph} in order to give a lower bound for the regularity of the $(\nu-1)$-squarefree power of the facet ideal of the $t$-path simplicial tree of a path graph. To this end, we recall some definitions below.
        
		Let $P$ be a poset. The comparability graph of $P$, denoted by $G_P$, is a graph whose vertex set consists of the elements of $P$ and $\{a,b\} \in E(G_P)$ if and only if $a$ and $b$ are comparable in $P$.
		
		Let $I$ be a monomial ideal. The lcm-lattice of $I$, denoted by $L(I)$, is the poset whose elements are the least common multiples of subsets of monomials in $G(I)$ which are ordered by divisibility. By the definition, $L(I)$ has $1$ as the unique minimal element. For any $u \in L(I)$,  the induced subposet of $L(I)$ with elements $v \in L(I)$ such that $1 < v < u$, is denoted by the open interval $(1, u)$. The simplicial complex $\Delta((1, u))$ is the order complex of the poset $(1, u)$. 
		
		In the following theorem and in next section, we adopt the following notation to refer to $t$-path simplicial trees of path graphs. Let $P_n$ be the path graph on vertex $\{1, \ldots, n\}$ and edges $\{i, i+1\}$ for all $i=1, \ldots, n-1$. For any $t \leq n$, we denote the $t$-path simplicial tree of $P_n$ by $\Gamma_{n,t}$. Then 
  \[
\F(\Gamma_{n,t})=\{ F_i=\{i, i+1, \ldots, i+t-1\} : i=1, \ldots, n-t+1\}.  
  \]
The ideal $I_{n,t}= I(\Gamma_{n,t})$ is called the {\em $t$-path ideal of $P_n$}. We label the generators of $I_{n,t}$ as $f_1, \ldots, f_{n-t+1}$ such that $f_i= \prod_{j \in F_i}x_j$ for each $i$. Moreover, we write $f_i < f_j$, if $i<j$. Let $u,v \in G(I_{n,t}^{[k]})$ with $u=f_{i_1}\ldots f_{i_k}$ and $v=f_{j_1}\ldots, f_{j_k}$. Let $A_{(u,v)}$ be the set of indices such that $i_a \in A_{(u,v)}$ if and only if $f_{i_a} \neq f_{j_a}$. Now we are ready to prove the following. 
		\begin{Theorem}\label{theorem: betti number t-path ideal}
			Let $\Gamma_{n,t}$ be the $t$-path simplicial tree of a path graph $P_n$ and $I_{n,t}= I(\Gamma_{n,t})$. Then $\beta_{1,p}(I_{n,t}^{[k]}) = 0$ if $p \notin \{ kt+1,(k+1)t\}$.
		\end{Theorem}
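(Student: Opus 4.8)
The plan is to compute the first syzygies of $I_{n,t}^{[k]}$ via the lcm-lattice machinery recalled just before the statement, using the theorem of Gasharov--Peeva--Welker \cite{GPW} that relates the graded Betti numbers to the reduced homology of the open intervals in $L\left(I_{n,t}^{[k]}\right)$. Concretely, for a monomial ideal $I$ one has
\[
\beta_{i,p}(I) = \sum_{\substack{w \in L(I)\\ \deg(w)=p}} \dim_K \widetilde{H}_{i-1}\bigl(\Delta((1,w)); K\bigr),
\]
so that $\beta_{1,p}\left(I_{n,t}^{[k]}\right)$ is controlled by $\widetilde{H}_0$ of the order complexes $\Delta((1,w))$ as $w$ ranges over multidegrees of degree $p$. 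Since $\widetilde{H}_0$ detects the number of connected components, a nonzero $\beta_{1,p}$ forces the existence of some $w$ of degree $p$ whose open interval $(1,w)$ is disconnected. The strategy is therefore to show that whenever $p \notin \{kt+1, (k+1)t\}$, every such interval is either empty or connected.

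First I would fix $u,v \in G\left(I_{n,t}^{[k]}\right)$ and consider $w=\lcm(u,v)$, the only elements $w \in L(I)$ that can contribute to a \emph{first} syzygy being least common multiples of exactly two generators whose interval $(1,w)$ has at least two components separating $u$ and $v$. Writing $u = f_{i_1}\cdots f_{i_k}$ and $v=f_{j_1}\cdots f_{j_k}$ with the facets $F_{i_a}=\{i_a,\dots,i_a+t-1\}$ as in the excerpt, the key numerical observation is that because each generator has degree $t$ and the facets of $\Gamma_{n,t}$ are intervals of $t$ consecutive integers, the degree $\deg(w)=|\supp(u)\cup\supp(v)|$ takes a very constrained set of values. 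When $u$ and $v$ agree in all but one factor (the set $A_{(u,v)}$ introduced just before the statement has one ``block''), the two differing facets are consecutive overlapping intervals, so their union has $t+1$ vertices and $\deg(w)=kt+1$; when the two generators are completely disjoint one gets $\deg(w)=(k+1)t$. The heart of the argument is to prove that these are the only two degrees for which $(1,w)$ can be disconnected: any intermediate degree $kt+1 < p < (k+1)t$ corresponds to $u,v$ differing in their supports by an amount that allows one to ``slide'' a single facet at a time, building an explicit path $u=w_0, w_1, \dots, w_s=v$ inside $G^{(u,v)}_{I^{[k]}}$ with each $\deg(\lcm(w_i,w_{i+1}))=kt+1$.

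The path-building step is where I would invest the most care, and it is the expected main obstacle. The idea is to exploit the linear structure of $P_n$: since each $F_i$ is determined by a single integer $i$, a $k$-matching is just a choice of $k$ indices that are pairwise ``far enough apart'' (at least $t$ apart), and moving from $u$ to $v$ amounts to shifting these indices. I would order the differing indices and shift the smallest mismatched index of $u$ one step at a time toward its target in $v$, checking at each step that the intermediate index set remains a valid $k$-matching (i.e.\ the shifted facet stays disjoint from the others, using that the non-moving facets are common to $u$ and $v$ and hence already compatible) and that consecutive monomials along the path have $\lcm$ of degree exactly $kt+1$. The simplicial-tree/forest results proved earlier---in particular Proposition~\ref{M=N} and the disjointness bookkeeping used in Theorem~\ref{thm: k-squarefree power non-linearly related}---should guarantee uniqueness of the matching associated to each monomial and rule out obstructions to these elementary moves. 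The delicate point is verifying that when $u$ and $v$ overlap partially (so $\deg(w)$ is strictly between the two extremes) there is always room to perform such a single-index shift without collapsing two indices together or violating the $t$-separation, which is precisely what fails at the endpoint degree $(k+1)t$ where $u$ and $v$ share no facets.

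Finally, having established that $(1,w)$ is connected for every $w$ of degree $p \notin \{kt+1,(k+1)t\}$, I would invoke the Gasharov--Peeva--Welker formula to conclude $\widetilde{H}_0(\Delta((1,w)))=0$ for all such $w$, whence $\beta_{1,p}\left(I_{n,t}^{[k]}\right)=0$. Equivalently, one may phrase the whole argument through Theorem~\ref{Corollary 1.2}: for degrees outside the two admissible values the graph $G^{(u,v)}_{I^{[k]}}$ on the relevant divisors is connected, so no first syzygy lives in degree $p$. I would present the combinatorial path construction as the core lemma and then read off the Betti-number vanishing as an immediate corollary.
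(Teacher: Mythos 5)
Your overall strategy coincides with the paper's: reduce to lcms of pairs of generators via the Taylor complex, apply the Gasharov--Peeva--Welker formula so that $\beta_{1,p}$ is governed by $\widetilde{H}_0$ of the open intervals $(1,m)$ in the lcm-lattice, and prove connectivity of those intervals by exhibiting paths between the degree-$kt$ vertices of the comparability graph. The one subcase you work out in detail --- $u$ and $v$ differing in a single factor whose facets overlap, joined by unit slides $f_{i_a}, f_{i_a+1},\dots,f_{j_a}$ with each consecutive lcm of degree $kt+1$ --- is exactly the paper's Case~(1).

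There are, however, two genuine gaps. First, you assert that the maximal value of $\deg(\lcm(u,v))$ is $(k+1)t$, attained ``when the two generators are completely disjoint''; in fact two disjoint elements of $G(I_{n,t}^{[k]})$ have lcm of degree $2kt$, and for $k\geq 2$ every realizable degree in the range $(k+1)t < p \leq 2kt$ must also be shown to yield a connected interval. Your proposal never addresses this range, which is where the paper spends most of its effort: a two-sided stepwise procedure (Step-$s$ and Step-$r$) that alternately modifies $u$ and $v$, precisely because a one-sided sequence of moves cannot always be kept inside the set of valid matchings dividing $m$. Second, even in the range $kt+1 < p < (k+1)t$, when $u$ and $v$ differ in more than one factor your prescription ``shift the smallest mismatched index of $u$ one step at a time'' can fail outright: for $t=3$, $k=2$, $u=f_1f_4$ and $v=f_3f_6$ one has $\deg(\lcm(u,v))=8=kt+2$, yet the first unit slide produces $f_2f_4$, which is not a generator since $F_2\cap F_4=\{4\}\neq\emptyset$. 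You flag exactly this collision as the delicate point but do not resolve it. The paper's resolution is not a unit slide but a wholesale replacement of the offending facet by its counterpart, with the direction (which of $u,v$ gets modified) chosen so that the incoming facet precedes all remaining facets and therefore cannot collide with them. Without an argument covering these two situations the proof is incomplete.
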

		
		\begin{proof}
			Let $I=I_{n,t}$. Following the construction of the Taylor's complex, see \cite[Section 7.1]{HH1}, attached to $I^{[k]}$, we have $\beta_{1,m}(I^{[k]})=0$ if $m\neq \lcm(u_1,u_2)$ for any $u_1,u_2 \in G(I^{[k]})$. Moreover, using the result of Gasharov, Peeva and Welker \cite{GPW}, we have for all $i\geq 0$ and for all $ m \in L(I^{[k]})$
			\[
			\beta_{1,m} (I^{[k]})= \dim_K \tilde{H}_{0} (\Delta((1,m)) ;K).
			\]
			Recall that $\dim_K \tilde{H}_{0} (\Delta((1,m)) ;K)$ is $c-1$, where $c$ is the number of connected components of $\Delta((1,m))$ (see \cite[Chapter 1-Section 7]{MJ}). Note that the maximum degree that $m$ can have is $2kt$. This gives $\beta_{1,p}(I^{[k]}) = 0$ for all $p > 2kt$. Therefore, to prove the assertion, it is enough to show the following: if $m$ is the least common multiple of two elements in $G(I^{[k]})$, with $kt+1 < \deg(m)  \leq 2kt$ and $\deg(m)\neq (k+1)t$, then the open interval $(1,m)$ in the lcm-lattice $L(I^{[k]})$ is connected. Moreover, to show that $P=(1, m)$ is connected, it is enough to show that the comparability graph $G_P$ of $P$ is connected.  
           
            By the definition of $G_P$, the vertices of $G_P$ are those monomials in $L(I^{[k]})$ that are different from $1$ and strictly divide $m$. Therefore, any monomial in $V(G_P)$ with degree strictly greater than $kt$ is adjacent with some elements of degree $kt$, which are precisely the generators of $I^{[k]}$. Then, it is enough to show that for any $v,w \in V(G_P)$ with $\deg(v)=\deg(w)=kt$ and $v\neq w$, there is a path in $G_P$ that connects $v$ and $w$. If $\lcm(v,w) \in V(G_P)$, then $\lcm(v,w)$ is a common neighbor of $v$ and $w$ and we are done. Assume that $\lcm(v,w) \notin V(G_P)$. Since $v$ and $w$ strictly divide $m$, we note that $\lcm(v,w) \notin V(G_P)$ if and only if $\lcm(v,w) = m$. Let $v=f_{i_1} \ldots f_{i_k}$ and $w=	f_{j_1} \ldots f_{j_k}$ with $i_1<\ldots < i_k$ and $j_1< \ldots < j_k$. First we assume that $kt+1 < \deg (m) < (k+1)t$. Then $\deg(m)=kt+r$ for some $2 \leq r \leq t-1$. Since $v\neq w$, there exists some index $a$ for which $f_{i_a} \neq f_{j_a}$.  \medskip
			
			Case (1): Let $|A_{(v,w)}|=1$. This means there exists exactly one index $a$ for which $f_{i_a}\neq f_{j_a}$, and $f_{i_p} = f_{j_p}$ for all $p \neq a$. Without loss of generality, we may assume that $f_{i_a} < f_{j_a}$. Then $j_a=i_a+r$. Set $z_q:=f_{i_a+q} $ for $q=0, \ldots, r$. Consider the elements of $G(I^{[k]})$ for each $q=0, \ldots, r$ given by $v_q= (v/f_{i_a})z_q$. Then $v_0=v$, $v_r=w$ and $\deg(m_q)=kt+1< \deg (m)$ where  $m_q=\lcm(v_q, v_{q+1})$. This shows that $m_q$ strictly divide $m$ and  $m_q \in (1,m)$, that is, $m_q \in V(G_P)$. This gives us a path $v=v_0, m_0, v_1, \ldots, m_{r-1}, v_r=w$ in $G_P$ connecting $v$ and $w$, as required.
   \medskip
			
			Case (2): Let $|A_{(v,w)}|>1$, and $a$ be the smallest index for which $f_{i_a}\neq f_{j_a}$. Without loss of generality, we may assume that $f_{j_a}< f_{i_a}$. Then $\supp(f_{j_a}) \cap \supp(f_{i_p})= F_{j_a} \cap F_{i_p}= \emptyset$ for all $p \neq a$. Set $v_1=( v/f_{i_a})f_{j_a}$. It yields $v_1$ corresponds to a $k$-matching of $\Gamma_{n,t}$ and $v_1 \in G(I^{[k]})$. Moreover, $\deg(\lcm(v, v_1))=kt+\ell < \deg(m)$ with $\ell<r$ because $|A_{(v,w)}|>1$.  Hence, $\lcm(v, v_1)$ strictly divides $m$ and $\lcm(v, v_1) \in V(G_P)$. So far, we have a path $v, \lcm(v, v_1), v_1$ in $G_P$. Note that $|A_{(v_1,w)}|< |A_{(v,w)}|$. 
			
			We repeat our argument by replacing $v$ with $v_1$ to obtain $v_2$ such that $|A_{(v_2,w)}|<|A_{(v_1,w)}|< |A_{(v,w)}|$. After  $d=|A_{(v,w)}|-1$ number of steps, we obtain $v_d$ for which $|A_{(v_d,w)}|=1$. Then by repeating the arguments as in Case (1), we further obtain a path from $v_d$ to $w$ in $G_P$ which can be augmented with the path $v, \lcm(v, v_1), v_1$, $ \lcm(v_1, v_2), v_2, \ldots$, $\lcm(v_{d-1}, v_d), v_d$. In this way, we obtain a path from $v$ to $w$ in $G_P$. 
			
			Now assume that $(k+1)t < \deg (m) < 2kt$. In this case $|A_{(v,w)}|>2$.  We proceed as follows: let $v_1=v$, $w_1=w$, $s=r=1$ and perform the following step.
   \medskip
			
			Step-$s$: We set $v_s=f_{s_1}\cdots f_{s_k}$ with $s_1< \ldots < s_k$, and $w_r=f_{r_1}\cdots f_{r_k}$ with $r_1< \ldots < r_k$. Let $a$ be the smallest integer for which $f_{s_a}\neq f_{r_a}$. If $f_{r_a} < f_{s_a}$, then we construct $v_{s+1}$ as follows such that $\lcm(v_{s},v_{s+1}) \in V(G_P)$. 
			
			Set $v_{s+1}:=(v_{s}/f_{s_a})f_{r_a}$. Since $f_{s_1}= f_{r_1}$, \ldots, $f_{r_{a-1}}= f_{s_{a-1}}$. and $f_{r_a} < f_{s_a}<f_{s_{a+1}}< \ldots, f_{s_k}$, it immediately follows that $v_{s+1} \in G(I^{[k]})$. Moreover,
			\begin{itemize}
				\item[(i)]  if $\supp(f_{r_a}) \cap \supp(f_{s_a})=\emptyset$, then  $\deg(\lcm(v_{s},v_{s+1}))=(k+1)t < \deg(m)$. 
				
				\item[(ii)] if $\supp(f_{r_a}) \cap \supp(f_{s_a}) \neq \emptyset$, then $\deg(\lcm(v_s,v_{s+1}))=kt+r< \deg(m)$ where $r<t$.
			\end{itemize}
			In both cases (i) and (ii) above, the $\lcm(v_{s},v_{s+1})$ strictly divides $m$. Therefore,  $\lcm(v_{s},v_{s+1}) \in V(G_P)$. At the end of this step, we obtain a path $v_s, \lcm(v_s, v_{s+1}), v_{s+1}$ in $G_P$. Also, $|A_{(v_{s}, w_r)}|> |A_{(v_{s+1}, w_r)}| $. If $|A_{(v_{s+1}, w_r)}| >0$, we set $v_s:=v_{s+1}$ and repeat Step-$s$. Otherwise, $v_{s+1}=w_r$ and we obtain the desired path
			\[
			v=v_1, \lcm(v_1, v_2), v_2, \ldots, v_s, \lcm(v_{s}, w_r), w_r, \lcm(w_{r-1}, w_r), w_{r-1}, \ldots, w_1=w
			\]
			connecting $v$ and $w$ in $G_P$. 

   If $f_{r_a} > f_{s_a}$ then we terminate Step-$s$ and go to Step-$r$ to construct $w_{r+1}$ such that $\lcm(w_{r},w_{r+1}) \in V(G_P)$.
   \medskip
   
			Step-$r$: Set $w_{r+1}:=(w_{r}/f_{r_a})f_{s_a}$. From a similar discussion as in the construction of $v_{s+1}$ above, it immediately follows  that $w_{r+1} \in G(I^{[k]})$. Moreover, 
			\begin{itemize}
				\item[(i)]  if $\supp(f_{r_a}) \cap \supp(f_{s_a})=\emptyset$, then  $\deg(\lcm(w_{r},w_{r+1}))=(k+1)t < \deg(m)$. 
				
				\item[(ii)] if $\supp(f_{r_a}) \cap \supp(f_{s_a}) \neq \emptyset$, $\deg(\lcm(w_r,w_{r+1}))=kt+r< \deg(m)$ where $r<t$.
			\end{itemize}
			In both cases (i) and (ii) above, $\lcm(w_{r},w_{r+1})$ strictly divides $m$ and  $\lcm(w_{r},w_{r+1}) \in V(G_P)$. At the end of this step, we obtain a path $w_r, \lcm(w_r, w_{r+1}), w_{r+1}$ in $G_P$. Also, $|A_{(w_{r}, v_s)}|> |A_{(w_{r+1}, v_s)}| $. If $|A_{(w_{r+1}, v_s)}| > 1$, we set $w_r:=w_{r+1}$ and repeat Step-$s$. Otherwise, $w_{r+1}=v_s$ and we obtain the desired path connecting $v_1$ and $w_1$ in $G_P$. 
			
			Note that, we perform Step-$s$ and Step-$r$ only a finite number of time. Indeed, $|A_{(v_{s}, w_r)}|> |A_{(v_{s+1}, w_r)}| $ at the end of Step-$s$ and $|A_{(w_{r}, v_s)}|> |A_{(w_{r+1}, v_s)}| $ at the end of Step-$r$. This completes the proof. 
		\end{proof}

		\section{Regularity of $t$-path ideals of path graphs} \label{Section: Regularity of t-path ideals of path graph}

   In this section, we study the regularity of the squarefree powers of the $t$-path ideal $I_{n,t}$ of path graph $P_n$. As the main result of this section, in Theorem~\ref{thm: equality regularity t-path ideals of path}, we provide a combinatorial description of $\reg(R/I_{n,t}^{[k]})$ in terms of the induced matching number of $\Gamma_{n-kt,t}$. To prove our main theorem, we first establish some preliminary results.
   
   First, we set some notations in order to prove the next lemma. If $\Delta$ is a simplicial complex and $F \in \F(\Delta)$, then 
    \[
    \Delta\setminus F := \langle G :  G \in \F(\Delta) \text{ with } F \cap G = \emptyset \rangle.
    \] 
    Moreover, given a simplicial forest $\Delta$ with a good leaf order $F_r, \ldots, F_1$, we set $f_i= \prod_{x_j \in F_i}x_j$, $\Delta_i=\langle F_{i}, \ldots, F_r\rangle$ for each $i=1, \ldots, r$, and $J_i=(f_1, \ldots, f_{i-1})$, for $i=2, \ldots, r$. Furthermore, we set $J_1=(0)$.

	\begin{Lemma}\label{reduction}
		Let $\Delta$ be a simplicial forest with good leaf order $F_r, \ldots, F_1$ and $f_i= \prod_{x_j \in F_i}x_j$ for all $i=1, \ldots, r$. Then for all $1 \leq k \leq \nu(\Delta)$, we have
		\begin{enumerate}
			\item[\em{(1)}]  $I(\Delta)^{[k+1]} : (f_1) = I(\Delta \setminus F_1)^{[k]}$.
			\item[\em{(2)}]  $(I(\Delta_i)^{[k+1]}+J_i) : (f_i) = I(\Delta_i \setminus F_i)^{[k]}+(J_i:(f_i))$, for all $2\leq i \leq r$. 
			 \item[\em{(3)}] $(I(\Delta_i)^{[k+1]}+J_i) + (f_i) 	= I(\Delta_{i+1} )^{[k+1]}+J_{i+1}$, for all $1\leq i \leq r$.
		\end{enumerate}
	\end{Lemma}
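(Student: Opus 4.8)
The plan is to establish (1) first, since (2) reduces to it and (3) is an independent combinatorial identity. Throughout I will use Corollary~\ref{cor:M=N1} to write each minimal generator of a squarefree power uniquely as a squarefree product $f_M := \prod_{F \in M} f_F$ indexed by a matching $M$, together with the elementary facts that for monomial ideals the colon distributes over sums, $(A + B) : (g) = (A : (g)) + (B : (g))$, and that $A : (g)$ is generated by the monomials $w/\gcd(w,g)$ with $w \in G(A)$.

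For (1) I would split the $(k+1)$-matchings of $\Delta$ according to whether they contain the good leaf $F_1$. If $F_1 \in M$, then $M \setminus \{F_1\}$ is a $k$-matching of $\Delta \setminus F_1$ and $f_M/\gcd(f_M, f_1) = f_{M\setminus\{F_1\}}$; running this backwards shows every generator of $I(\Delta \setminus F_1)^{[k]}$ lies in $I(\Delta)^{[k+1]} : (f_1)$, giving the inclusion $\supseteq$. The hard part will be the reverse inclusion coming from matchings $M$ with $F_1 \notin M$, where I must show $f_M/\gcd(f_M, f_1) \in I(\Delta \setminus F_1)^{[k]}$. The crux is the claim that \emph{a matching of $\Delta$ avoiding $F_1$ contains at most one facet meeting $F_1$}. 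If two facets $F, F' \in M$ both met $F_1$, I would pass to the subcomplex $\langle F_1, F, F' \rangle$: because $F_1$ is a \emph{good} leaf it stays a leaf there, forcing a branch $B \in \{F, F'\}$ with $F \cap F_1 \subseteq B \cap F_1$ and $F' \cap F_1 \subseteq B \cap F_1$, which is impossible since $F \cap F_1$ and $F' \cap F_1$ are disjoint and nonempty. It is essential that $F_1$ be a good leaf and not merely a leaf (an ordinary leaf can genuinely fail this, and the good-leaf order is precisely what excludes the bad configuration). Granting the claim, at least $k$ facets of $M$ avoid $F_1$; their product is coprime to $f_1$, and any $k$ of them form a $k$-matching of $\Delta \setminus F_1$ whose monomial divides $f_M/\gcd(f_M, f_1)$, placing the quotient in $I(\Delta \setminus F_1)^{[k]}$.

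For (2) I would apply (1) verbatim to the forest $\Delta_i$, whose good leaf is $F_i$ by the good-leaf order $F_r,\ldots,F_1$, obtaining $I(\Delta_i)^{[k+1]} : (f_i) = I(\Delta_i \setminus F_i)^{[k]}$. Since colon distributes over the sum, $(I(\Delta_i)^{[k+1]} + J_i) : (f_i) = (I(\Delta_i)^{[k+1]} : (f_i)) + (J_i : (f_i))$, and substituting the previous identity yields the statement. The degenerate ranges of $k$, where one or both sides vanish, are covered by the inequality $\nu(\Delta_i \setminus F_i) \le \nu(\Delta_i) - 1$, which forces both sides to be zero simultaneously.

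For (3), since $J_i + (f_i) = J_{i+1}$, it suffices to show $I(\Delta_i)^{[k+1]} + J_{i+1} = I(\Delta_{i+1})^{[k+1]} + J_{i+1}$. As $\Delta_i$ differs from $\Delta_{i+1}$ only by the extra facet $F_i$, I would separate the $(k+1)$-matchings of $\Delta_i$ into those avoiding $F_i$, which are exactly the $(k+1)$-matchings of $\Delta_{i+1}$ and generate $I(\Delta_{i+1})^{[k+1]}$, and those using $F_i$, whose generators are divisible by $f_i$ and hence lie in $(f_i) \subseteq J_{i+1}$. Absorbing the latter into $J_{i+1}$ gives the identity. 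I expect (3) to be routine bookkeeping, with essentially all of the genuine difficulty concentrated in the good-leaf claim of (1).
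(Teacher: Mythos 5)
Your proposal is correct and follows essentially the same route as the paper: the paper likewise reduces part (1) to the fact that the good leaf $F_1$ (resp.\ $F_i$) meets at most one facet of the relevant matching, obtained from the leaf property of $F_1$ inside the subcomplex spanned by $F_1$ and the facets of the matching, and then proves (2) by distributing the colon over the sum and (3) by the same bookkeeping with $J_i+(f_i)=J_{i+1}$. The only cosmetic difference is that you certify the key claim on three-facet subcomplexes $\langle F_1,F,F'\rangle$ whereas the paper invokes goodness once on the full subcomplex $\langle F_1,F_{i_1},\dots,F_{i_{k+1}}\rangle$; both arguments hinge on $F_1$ being a \emph{good} leaf, exactly as you emphasize.
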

	
	\begin{proof}
		(1) Let $J=   I(\Delta)^{[k+1]} : (f_1) $ and $g \in J$.  Then $gf_1  = h f_{i_1} \ldots f_{i_{k+1}}$ such that $\{F_{i_1}, \ldots, F_{i_{k+1}}\}$ is a $(k+1)$-matching of $\Delta$ and $h$ is a monomial. Since $F_1$ is a good leaf of $\Delta$, it yields that $F_1$ is a leaf of the subcollection $\langle F_1, F_{i_1} \ldots F_{i_{k+1}}\rangle$ and we may assume that $F_1\cap F_{i_j} \subseteq F_1\cap F_{i_1}$ for all $j=2, \ldots k+1$. Therefore, any element  in $F_1 \setminus F_{i_1}$ is not contained in $F_{i_j}$ for all $j=2, \ldots, k+1$. It yields $f_1 $ divides $h f_{i_1}$ and  $f_{i_2}\ldots f_{i_{k+1}} $ divide $g$. Since $F_{i_1}$ does not intersect any $F_{i_j}$ for all $j=2, \ldots, k+1$, it follows that $F_1$ also does not intersect any 
		$F_{i_j}$ for all $j=2, \ldots, k+1$. From this we conclude that $F_{i_2}, \ldots, F_{i_{k+1}} \in \Delta \setminus F_1$ and $ g \in   I(\Delta \setminus F_1)^{[k]}$. The inclusion  $I(\Delta \setminus F_1)^{[k]} \subseteq J$ is obvious since any facet in $\Delta \setminus F_1$ is disjoint with $F_1$.
  
		(2) We have $(I(\Delta_i)^{[k+1]}+J_i) : (f_i) = (I(\Delta_i)^{[k+1]}:f_i )+(J_i : (f_i)) $, for all $2\leq i \leq r$. Since $F_i$ is a good leaf of $\Delta_i$, applying a similar argument as in proof of statement in (1), we have  $(I(\Delta_i)^{[k+1]}:f_i )= I(\Delta_i \setminus F_i)^{[k]}$, which gives us the equality in (2).  
        
        (3) We have $J_i+(f_i)= J_{i+1}$. Any element in $G(I(\Delta_i)^{[k+1]})$ is of the form $u=f_{i_1}\cdots f_{i_{k+1}}$ such that $\{F_{i_1}, \ldots, F_{i_{k+1}}\}$ is a $(k+1)$-matching. Note that if $f_i$ divides $u$ then $u\in (f_i) \subset J_{i+1}$. On the other hand, if $f_i$ does not divide $u$ then $u\in  I(\Delta_{i+1})^{[k+1]}$. This completes the proof.
	\end{proof}

\begin{Remark}\label{Remark}
{\em
    Let $\Gamma_{n,t}$ be the $t$-path simplicial tree of the path graph $P_n$. Then
\begin{equation*}   \nu(\Gamma_{n,t})=\displaystyle\Big\lfloor \frac{n}{t}\Big\rfloor ,\; \nu_0(\Gamma_{n,t})=\displaystyle\Big\lfloor \frac{n-1}{t}\Big\rfloor, \;
\nu_1(\Gamma_{n,t})=\displaystyle\Big\lceil \frac{n-t+1}{t+1}\Big\rceil. 
\end{equation*}
Below, we provide a brief reasoning to justify the above equalities. As before, we set
\[
\F(\Gamma_{n,t})=\{ F_i=\{i, i+1, \ldots, i+t-1\} : i=1, \ldots, n-t+1\}  
  \] 
    \begin{enumerate}
        \item Let $n=qt+r$ where $q=\lfloor \frac{n}{t}\rfloor$ and $0\leq r\leq t-1$. Then, it is easy to see that matching number of $\Gamma_{n,t}=q$. Indeed, $M=\{F_1, F_{1+t}, \ldots, F_{1+(q-1)t}\}$ is a maximal matching of $\Gamma_{n,t}$. 
        \item Observe that two facets $F_i, F_j$ of $\Gamma_{n,t}$ with $i<j$ form a gap if and only if $i+t<j$. Indeed, $F_i \cap F_j = \emptyset$ if and only if $i+t \leq j$, and if $j= i+t$, then $F_{i+1}$ belongs to the induced subcollection on $F_i\cup F_j$. With this observation, we form a restricted matching $M$ of $\Gamma_{n,t}$ of maximal size as follows: take a maximal matching $N$ of $\Gamma_{n-(t+1),t}$ as described in (1), and set $M=N \cup \{ F_{n-t+1}\}$. In $M$, the facet $F_{n-t+1}$ forms a gap with all the facets in $N$.  This gives \[\nu_0(\Gamma_{n,t})= \nu(\Gamma_{n-(t+1),t})+1=\displaystyle\Big\lfloor \frac{n-(t+1)}{t}\Big\rfloor +1=\displaystyle\Big\lfloor \frac{n-1}{t}\Big\rfloor.
        \]
        \item It follows from the definition of induced matching that any two facets in an induced matching form a gap. Let $n=q'(t+1)+r'$ with $0 \leq r' \leq t$. Then following the explanation given above in (2), it is easy to see that if $r'< t$, then the set $\{ F_1, F_{1+(1+t)}, \ldots, F_{1+(q'-1)(t+1)}\}$ is an induced matching of $\Gamma_{n,t}$ of maximal size, and $\nu_1(\Gamma_{n,t})=q'$. On the other hand, if $r'=t$, then $\{ F_1, F_{2+t}, \ldots, F_{1+(q'-1)(t+1)}, F_{n-t+1}\}$ is an induced matching of $\Gamma_{n,t}$ of maximal size, and $\nu_1(\Gamma_{n,t})=q'+1$. This completes the argument. 
\end{enumerate}
In Example~\ref{ex:restricted matching}(1), a precise computation of $\nu(\Gamma_{12,3})$, $\nu_0(\Gamma_{12,3})$ and $\nu_2(\Gamma_{12,3})$ is given.
}
        \end{Remark}

    \begin{Lemma}\label{lem:regofpath}
		Let $\Gamma_{n,t}$ be the $t$-path simplicial tree of a path graph $P_n$. Let $I_{n,t}= I(\Gamma_{n,t})$. Then
		\[
		\reg\left( \frac{R}{I_{n,t}}\right) = (t-1)\nu_1 (\Gamma_{n,t}).
		\]
	\end{Lemma}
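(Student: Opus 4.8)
The plan is to prove the two inequalities $\reg(R/I_{n,t}) \ge (t-1)\nu_1(\Gamma_{n,t})$ and $\reg(R/I_{n,t}) \le (t-1)\nu_1(\Gamma_{n,t})$ separately: the lower bound by exhibiting a large induced matching, and the upper bound by induction on $n$ using a Mayer--Vietoris sequence together with the matching numbers recorded in Remark~\ref{Remark}.

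\emph{Lower bound.} By Remark~\ref{Remark}, $\Gamma_{n,t}$ admits an induced matching $M$ of size $m := \nu_1(\Gamma_{n,t})$ (for example $M=\{F_1,F_{t+2},F_{2t+3},\dots\}$). Put $W=\bigcup_{F\in M}F$. By the very definition of an induced matching, the induced subcomplex of $\Gamma_{n,t}$ on $W$ is exactly $\langle M\rangle$, so its facet ideal is $(g_1,\dots,g_m)$, where the monomials $g_j=\prod_{i\in F}x_i$ ($F\in M$) have pairwise disjoint supports and degree $t$. Hence $(g_1,\dots,g_m)$ is a complete intersection of forms of degree $t$, so $\reg\big(R/(g_1,\dots,g_m)\big)=m(t-1)$. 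Since, by a standard restriction argument for squarefree monomial ideals, passing to an induced subcomplex does not increase the graded Betti numbers, we get $\reg(R/I_{n,t})\ge \reg\big(R/(g_1,\dots,g_m)\big)=(t-1)\nu_1(\Gamma_{n,t})$.

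\emph{Upper bound.} I argue by induction on $n$. The cases $n\le t$ are immediate: for $n<t$ the ideal is $0$ and both sides vanish, while for $n=t$ the ideal is principal of degree $t$, giving $\reg(R/I_{t,t})=t-1=(t-1)\nu_1(\Gamma_{t,t})$. For $n\ge t+1$, set $f=f_{n-t+1}$, $K=(f)$ and $J=(f_1,\dots,f_{n-t})$, so that $I_{n,t}=J+K$ and $J=I_{n-1,t}$. The short exact sequence
\[
0 \to R/(J\cap K) \to R/J \oplus R/K \to R/I_{n,t} \to 0
\]
yields $\reg(R/I_{n,t})\le\max\{\reg(R/J),\ \reg(R/K),\ \reg(R/(J\cap K))-1\}$. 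Here $\reg(R/K)=t-1$, and by induction together with the monotonicity of $\nu_1$ in $n$ one has $\reg(R/J)=\reg(R/I_{n-1,t})\le(t-1)\nu_1(\Gamma_{n-1,t})\le(t-1)\nu_1(\Gamma_{n,t})$. The decisive ingredient is the colon ideal: for $i\le n-2t+1$ one has $F_i\cap\supp(f)=\emptyset$, whereas for $n-2t+2\le i\le n-t$ the generator $f_i:f$ is divisible by $x_{n-t}=f_{n-t}:f$, so that
\[
(J:f)=(x_{n-t})+I_{n-t-1,t},
\]
where $x_{n-t}$ does not occur in $I_{n-t-1,t}$. Using $J\cap K\cong (J:f)(-t)$ and the coprimality of $x_{n-t}$ with $I_{n-t-1,t}$, this gives $\reg(R/(J\cap K))=\reg(R/I_{n-t-1,t})+t$. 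By induction this is at most $(t-1)\nu_1(\Gamma_{n-t-1,t})+t$, and since Remark~\ref{Remark} gives $\nu_1(\Gamma_{n,t})=\nu_1(\Gamma_{n-t-1,t})+1$, we conclude $\reg(R/(J\cap K))-1\le(t-1)\nu_1(\Gamma_{n,t})$. Combining the three bounds closes the induction, and with the lower bound it yields the claimed equality.

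\emph{Main obstacle.} The heart of the argument is the colon computation $(J:f)=(x_{n-t})+I_{n-t-1,t}$: one must see that, after discarding the generators made redundant by the single variable $x_{n-t}$, the surviving part is again the $t$-path ideal of a path, shortened by exactly $t+1$ vertices. This is precisely the shift matching the recursion $\nu_1(\Gamma_{n,t})=\nu_1(\Gamma_{n-t-1,t})+1$ from Remark~\ref{Remark}, so that the $+t$ coming from $J\cap K$ is absorbed by one extra unit of $\nu_1$. The remaining bookkeeping (the base cases and the monotonicity of $\nu_1$) is routine.
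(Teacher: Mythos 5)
Your proof is correct, but it takes a genuinely different route from the paper: the paper disposes of this lemma in one line by citing the known formula for $\reg(R/I_{n,t})$ from \cite[Corollary 5.4]{BHaK} and then translating it via the expression for $\nu_1(\Gamma_{n,t})$ in Remark~\ref{Remark}, whereas you rederive the formula from scratch. Your lower bound (restrict to the induced subcomplex on an induced matching, which is a complete intersection of $\nu_1$ forms of degree $t$, and invoke the restriction lemma \cite[Lemma 4.4]{HHZ}) is exactly the mechanism behind Proposition~\ref{prop:lowerbound} later in the paper, and your upper bound via the Mayer--Vietoris sequence for $J=I_{n-1,t}$ and $K=(f_{n-t+1})$ hinges on the correct computation $(J:f_{n-t+1})=(x_{n-t})+I_{n-t-1,t}$ in disjoint variables, which meshes with the recursion $\nu_1(\Gamma_{n,t})=\nu_1(\Gamma_{n-t-1,t})+1$; this is structurally the same colon-ideal induction the paper deploys for the harder Theorem~\ref{thm: upper bound} on squarefree powers. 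What your approach buys is self-containedness (no reliance on the external Betti-number computation of Bouchat--H\`a--O'Keefe) and a template that foreshadows the paper's own treatment of $I_{n,t}^{[k]}$; what the paper's citation buys is brevity. The one point worth making explicit in your write-up is that the generator $f_{n-2t+1}$, whose support contains $x_{n-t}$, is absorbed by the variable $x_{n-t}$ in the colon ideal --- you do note this implicitly when you say the surviving part is $I_{n-t-1,t}$ rather than $I_{n-t,t}$, and that off-by-one is precisely what makes the induction close.
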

	\begin{proof}
		It follows immediately from \cite[Corollary 5.4]{BHaK} and Remark \ref{Remark}. 
	\end{proof}

     Our strategy to provide an upper bound for $\reg(R/I_{n,t}^{[k]})$ relies on repeatedly utilizing the following short exact sequence $$0 \rightarrow R/(I:f) \rightarrow  R/I \rightarrow  R/(I+(f) )\rightarrow 0, $$
    where $I$ is an appropriate monomial ideal and $f$ is an element of $I$ of degree $d$. In fact it is known from \cite[Lemma 2.10]{DHS} that $$\reg(R/I) \leq \max\{\reg(R/ (I:f))+d, \reg(I+(f))\}.$$
	
	\begin{Theorem}\label{thm: upper bound}
		Let $\Gamma_{n,t}$ be the $t$-path simplicial tree of a path graph $P_n$. Let $I_{n,t}= I(\Gamma_{n,t})$. Then for any $1\leq k+1 \leq \nu(\Gamma_{n,t})$, we  have
		\[
		\reg\left( \frac{R}{I_{n,t}^{[k+1]}}\right)
		\leq
		kt+\reg  \left( \frac{R}{I_{n-kt,t}}\right) = kt+(t-1)\nu_1 (\Gamma_{n-kt,t}).
		\]
	\end{Theorem}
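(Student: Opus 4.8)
The plan is to establish the inequality, since the final equality $kt+(t-1)\nu_1(\Gamma_{n-kt,t})=kt+\reg(R/I_{n-kt,t})$ is just Lemma~\ref{lem:regofpath}. I would argue by induction on $k$, the case $k=0$ being the tautology $\reg(R/I_{n,t})=\reg(R/I_{n,t})$. Throughout I use the good leaf order $F_r,\ldots,F_1$ of $\Gamma_{n,t}$ with $F_i=\{i,\ldots,i+t-1\}$ and $r=n-t+1$, and I write $L_i=I(\Delta_i)^{[k+1]}+J_i$, so that $L_1=I_{n,t}^{[k+1]}$ and, by Lemma~\ref{reduction}(3), $L_i+(f_i)=L_{i+1}$.

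For the inductive step I would peel the leaves $F_1,F_2,\ldots$ one at a time. Applying the short exact sequence of \cite[Lemma 2.10]{DHS} with $f=f_i$ (which has degree $t$) and using Lemma~\ref{reduction}(2),(3) gives
\[
\reg(R/L_i)\le\max\{\reg(R/(I(\Delta_i\setminus F_i)^{[k]}+(J_i:f_i)))+t,\ \reg(R/L_{i+1})\}.
\]
I would carry this out for $i=1,\ldots,i_0$ with $i_0=n-(k+1)t+1$, which is exactly the largest index for which $\Delta_i$ still admits a $(k+1)$-matching. At the bottom, $\Delta_{i_0+1}$ has matching number $k$, so $I(\Delta_{i_0+1})^{[k+1]}=0$ and
\[
L_{i_0+1}=J_{i_0+1}=(f_1,\ldots,f_{i_0})=I_{n-kt,t},
\]
because $i_0$ equals the number of facets of $\Gamma_{n-kt,t}$. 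Hence the terminal term contributes only $\reg(R/I_{n-kt,t})\le kt+\reg(R/I_{n-kt,t})$, and the whole problem is reduced to bounding each colon term $\reg(R/(I(\Delta_i\setminus F_i)^{[k]}+(J_i:f_i)))+t$ by $kt+\reg(R/I_{n-kt,t})$ for $1\le i\le i_0$.

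The decisive structural observation is that the two summands of the colon ideal involve disjoint sets of variables. Indeed $\Delta_i\setminus F_i\cong\Gamma_{n-i-t+1,t}$ uses $x_{i+t},\ldots,x_n$, while a direct computation of the relevant colons shows $J_i:f_i=(x_{i-1})+I_{i-2,t}$, which uses only $x_1,\ldots,x_{i-1}$. By the additivity of regularity for ideals supported on disjoint variables, together with $\reg(R/((x_{i-1})+I_{i-2,t}))=\reg(R/I_{i-2,t})=(t-1)\nu_1(\Gamma_{i-2,t})$ (Lemma~\ref{lem:regofpath}) and the inductive hypothesis applied to $I_{n-i-t+1,t}^{[k]}$, I obtain
\[
\reg(R/(I(\Delta_i\setminus F_i)^{[k]}+(J_i:f_i)))+t\le kt+(t-1)\bigl(\nu_1(\Gamma_{n-i-kt+1,t})+\nu_1(\Gamma_{i-2,t})\bigr).
\]
Setting $m_1=n-i-kt+1$ and $m_2=i-2$, so that $m_1+m_2+1=n-kt$, the desired bound follows once I prove the superadditivity
\[
\nu_1(\Gamma_{m_1,t})+\nu_1(\Gamma_{m_2,t})\le\nu_1(\Gamma_{m_1+m_2+1,t}).
\]

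I expect this superadditivity of the induced matching number to be the crux. I would prove it by concatenation: given induced matchings of $\Gamma_{m_1,t}$ and of $\Gamma_{m_2,t}$, I place the first on the vertices $\{1,\ldots,m_1\}$ and the (shifted) second on $\{m_1+2,\ldots,m_1+m_2+1\}$ of the path on $m_1+m_2+1$ vertices, leaving the single buffer vertex $m_1+1$ between them. By the gap criterion recalled in Remark~\ref{Remark} ($F_a,F_b$ with $a<b$ form a gap iff $a+t<b$), this one-vertex separation forces every facet of the first block to form a gap with every facet of the second block, so the union is again an induced matching, giving the inequality. The remaining work is routine bookkeeping: checking the explicit form of $J_i:f_i$ (the overlapping colons collapse to the single variable $x_{i-1}$, and $f_{i-t}$ is absorbed by it), verifying that the induction hypothesis indeed applies on the range $1\le i\le i_0$ (where $\Gamma_{n-i-t+1,t}$ has a $k$-matching), and handling the degenerate endpoints $i=1$ and small $i$, where $m_2<t$ forces $\nu_1(\Gamma_{m_2,t})=0$ and the bound holds trivially.
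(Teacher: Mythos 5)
Your proposal is correct and follows essentially the same route as the paper's proof: the same iterated short exact sequences along the good leaf order via Lemma~\ref{reduction} and \cite[Lemma 2.10]{DHS}, the same identification $J_i:f_i=I_{i-2,t}+(x_{i-1})$ and terminal term $I_{n-kt,t}$, the same disjoint-variable additivity of regularity, and the same reduction to a superadditivity of $\nu_1$; the only differences are that you induct on $k$ rather than on $n$, and that you prove $\nu_1(\Gamma_{m_1,t})+\nu_1(\Gamma_{m_2,t})\le\nu_1(\Gamma_{m_1+m_2+1,t})$ by an explicit concatenation instead of reading it off the formula in Remark~\ref{Remark}. Your buffered version of that inequality is in fact the right one to use: the paper's intermediate claim $\nu_1(\Gamma_{i-2,t})+\nu_1(\Gamma_{n-i+1-kt,t})\le\nu_1(\Gamma_{n-1-kt,t})$ (without the extra vertex) can fail, e.g.\ $\nu_1(\Gamma_{5,2})+\nu_1(\Gamma_{5,2})=4>3=\nu_1(\Gamma_{10,2})$, although the bound by $\nu_1(\Gamma_{n-kt,t})$ that is actually needed still holds.
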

	\begin{proof} 
 We prove the assertion by applying induction on $n$. It is easy to see that the assertion holds for $P_t$.  Assume that the assertion holds for all paths on $m$ vertices such that $t \leq m<n$, that is, for each $1\leq k+1 \leq \nu(\Gamma_{m,t})$, we have 
 \[		\reg\left( \frac{R}{I_{m,t}^{[k+1]}}\right)
		\leq kt+(t-1)\nu_1 (\Gamma_{m-kt,t}).
		\]

		Given any two positive integers $i\leq j$, we set $[i,j]=\{i, i+1, \ldots, j\}$. To simplify the notation in this proof, we denote the path graph on vertices $[i,j]$  by $P_{[i,j]}$. In particular, for the path graph on the vertices $\{1, \ldots, n\}$, instead of writing $P_n$, we write $P_{[1,n]}$. Since $t$ is fixed throughout the proof, we denote the $t$-path ideal of $P_{[i,j]}$ simply by $I_{[i,j]}$, and $\Gamma_{n,t}$ by $\Gamma_{n}$. 
  
        Set $F_i=\{i,i+1, \ldots, i+t-1\}$ and $f_i= \prod_{j \in F_i}x_j$ for all $i=1, \ldots, n-t+1$. The order of the facets of $\Gamma_n$ given by $ F_{n-t+1}, \ldots, F_1$ is a good leaf order. Consider the following exact sequence 
		\[
		0 \rightarrow \frac{R}{I_{[1,n]}^{[k+1]} : f_{1}}
		\xrightarrow{f_1} \frac{R}{I_{[1,n]}^{[k+1]}} 
		\rightarrow  \frac{R}{I_{[1,n]}^{[k+1]}+(f_{1})}
		\rightarrow 0.
		\]
		Using Lemma~\ref{reduction}, we obtain 
		\[
		0 \rightarrow \frac{R}{I_{[1+t,n]}^{[k]} }
		\xrightarrow{f_1} \frac{R}{I_{[1,n]}^{[k+1]}} 
		\rightarrow  \frac{R}{I_{[2,n]}^{[k+1]}+I_{[1,t]}}
		\rightarrow 0 
		\]
		which gives
		\[
		\reg\left( \frac{R}{I_{[1,n]}^{[k]}} \right)   \leq   	\max
		\left\{ 
		t+\reg\left( \frac{R}{I_{[1+t,n]}^{[k]} } \right),  
		\reg\left(  \frac{R}{I_{[2,n]}^{[k+1]}+I_{[1,t]}} \right) 
		\right\}.
		\]
		Now we investigate $\reg( R/(I_{[2,n]}^{[k+1]}+I_{[1,t]}))$. Let $a$ be the maximum integer for which $k+1$ is the matching number of the path graph $P_{[a+1,n]}$. Indeed, it can be verified that $a=n-t(k+1)$. Using Lemma~\ref{reduction}, for any $2 \leq i\leq a$, we obtain 
		\[
		0 \rightarrow   \frac{R}{I_{[i+t,n]}^{[k]}+(I_{[1,t+i-2]} :f_i)}
		\xrightarrow{f_i} \frac{R}{I_{[i,n]}^{[k+1]}+I_{[1,t+i-2]}} 
		\rightarrow  \frac{R}{I_{[i+1,n]}^{[k+1]}+I_{[1,t+i-1]}} 
		\rightarrow 0 .
		\]
		Furthermore, since $\nu(P_{[a+1,n]})=k+1$, we get:
		\[
		0 \rightarrow   \frac{R}{I_{[a+1+t,n]}^{[k]}+(I_{[1,t+a-1]} :f_{a+1})}
		\xrightarrow{f_{a+1}}  \frac{R}{I_{[a+1,n]}^{[k+1]}+I_{[1,t+a-1]}} 
		\rightarrow  \frac{R}{I_{[1,t+a]}}
		\rightarrow 0 .
		\]
		Therefore,  
		\[
		\reg\left( \frac{R}{I_{n,t}^{[k+1]}}\right)   \leq   	\max \left\{ 
		t+\reg\left( \frac{R}{I_{[1+t,n]}^{[k]} } \right),  
		\reg \left( \frac{R}{I_{[1,t+a]}} \right), 
		\alpha 
		\right\}
		\]
		where 
		\[
		\alpha=\max_{2 \leq i \leq a+1}
		\left\{  
		t+\reg \left(  \frac{R}{I_{[i+t,n]}^{[k]}+(I_{[1,t+i-2]}:f_i)} \right)
		\right \}.
		\]
		Since $a=n-t(k+1)$, using Lemma~\ref{lem:regofpath} we obtain
  \[
	 \reg \left( \frac{R}{I_{[1,t+a]}} \right)=
  (t-1)\nu_1(\Gamma_{t+a})=
  (t-1)\nu_1(\Gamma_{n-tk})	
  \leq
  kt+(t-1)\nu_1 (\Gamma_{n-kt}).
		\]
		
		Note that $I_{[1+t,n]}^{[k]} $ can be identified as the $t$-path ideal of the path graph $P_{[1,n-t]}$, using the induction hypothesis, we obtain 
		\[
		t+\reg\left( \frac{R}{I_{[1+t,n]}^{[k]} } \right)\leq t+(k-1)t+(t-1)\nu_1 (\Gamma_{n-t-(k-1)t})=
		kt+(t-1)\nu_1 (\Gamma_{n-kt}).
		\]
		
		Now we analyze $\alpha$ and compare it with $kt+(t-1)\nu_1 (\Gamma_{n-kt})$. For any $2 \leq i\leq a+1$, the ideals $I_{[i+t,n]}^{[k]}$ and $I_{[1,t+i-2]} :f_i$ lie in disjoint set of vertices. Therefore
		\begin{equation}
			\begin{split}
				\reg \left(  \frac{R}{I_{[i+t,n]}^{[k]}+(I_{[1,t+i-2]} :f_i)} \right)
				&= \reg \left(  \frac{R}{I_{[i+t,n]}^{[k]}} \right)+\reg \left(  \frac{R}{(I_{[1,t+i-2]} :f_i)} \right).
			\end{split}
		\end{equation}
		
    Since $	I_{[1,t+i-2]}= (f_1, \ldots, f_{i-1}) $, for $2 \leq i\leq t$, we have
		\begin{equation}\label{eq:colon}
			\begin{split}
				I_{[1,t+i-2]} : f_i &= (f_1, \ldots, f_{i-1}) : f_i \\
				&=(f_1, \ldots, f_{i-t-1}) : f_i+(f_{i-t}, \ldots, f_{i-1}) : f_i\\
				&= (f_1, \ldots, f_{i-t-1})+(x_{i-1})= I_{[1,i-2]}+(x_{i-1}).
			\end{split}
		\end{equation}
		If $i \leq t+1$, then we set $(f_1, \ldots, f_{i-t-1})=0$ in above equation. By identifying $I_{[i+t,n]}^{[k]}$ as $t$-path ideal of the path graph $P_{[1, n-t-i+1]}$, and using induction hypothesis yields 
	  \[
		\reg \left(  \frac{R}{I_{[i+t,n]}^{[k]}} \right) 
		\leq 
		(k-1)t+(t-1)\nu_1(\Gamma_{n-i+1-kt}).
	\]
	On the other hand using (\ref{eq:colon}) and Lemma~\ref{lem:regofpath} we have 
		
		\[
		\reg \left(  \frac{R}{(I_{[1,t+i-2]} :f_i)} \right) 
		\leq 
		(t-1)\nu_1(\Gamma_{i-2}).
		\]
		Since $\nu_1(\Gamma_{i-2})+\nu_1(\Gamma_{n-i+1-kt}) \leq \nu_1(\Gamma_{n-1-kt})$, we see that $\alpha$ is less than $	kt+(t-1)\nu_1 (\Gamma_{n-kt})$. This completes the proof.
	\end{proof}

     Next we show that the upper bound given in Theorem~\ref{thm: upper bound}, is indeed equal to $\reg\left( \frac{R}{I_{n,t}^{[k+1]}}\right)$. To do this, we first compute the regularity of $(\nu(\Gamma_{n,t})-1)$-th power of $I_{n,t}$.
 
  \begin{Proposition}
		\label{prop: regularity (nu-1)-squarefree power of line graph}
		Let $\Gamma_{n,t}$ be the $t$-path simplicial tree of path graph $P_n$, and $I_{n,t}= I(\Gamma_{n,t})$. Suppose that $\nu(\Gamma_{n,t}) -1\neq \nu_0(\Gamma_{n,t})$ and $\nu(\Gamma_{n,t}) >1$. Then $$\reg \left( \frac{R}{I_{n,t}^{[\nu(\Gamma_{n,t})-1]}}\right)= \nu(\Gamma_{n,t}) t-2.$$
	\end{Proposition}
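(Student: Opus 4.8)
The plan is to establish the two inequalities $\reg(R/I_{n,t}^{[\nu-1]}) \leq \nu t - 2$ and $\reg(R/I_{n,t}^{[\nu-1]}) \geq \nu t - 2$ separately, writing $\nu = \nu(\Gamma_{n,t})$ throughout. First I would unpack the hypothesis. Since a restricted matching is in particular a matching, we always have $\nu_0(\Gamma_{n,t}) \leq \nu$, while Proposition~\ref{prop:difference of matchings} gives $\nu - \nu_0(\Gamma_{n,t}) \leq 1$; combined with the assumption $\nu - 1 \neq \nu_0(\Gamma_{n,t})$ this forces $\nu_0(\Gamma_{n,t}) = \nu$. By the formulas in Remark~\ref{Remark} this is exactly the condition $t \nmid n$, so I may write $n = \nu t + r$ with $1 \leq r \leq t-1$ (note this already excludes $t=1$, so $\dim(\Gamma_{n,t}) = t-1 > 0$).

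For the upper bound I would invoke Theorem~\ref{thm: upper bound} with $k+1 = \nu-1$, that is $k = \nu-2$ (permissible since $\nu > 1$), giving
\[
\reg\!\left(\frac{R}{I_{n,t}^{[\nu-1]}}\right) \leq (\nu-2)t + (t-1)\,\nu_1\!\left(\Gamma_{n-(\nu-2)t,\,t}\right).
\]
The only remaining work is the evaluation $\nu_1(\Gamma_{n-(\nu-2)t,\,t}) = 2$. Substituting $n = \nu t + r$ gives $n - (\nu-2)t = 2t + r$, and the formula for $\nu_1$ in Remark~\ref{Remark} yields $\nu_1(\Gamma_{2t+r,\,t}) = \lceil (t+r+1)/(t+1) \rceil$; since $1 \leq r \leq t-1$ forces $t+1 < t+r+1 \leq 2t < 2(t+1)$, this ceiling equals $2$. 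Hence the bound collapses to $(\nu-2)t + 2(t-1) = \nu t - 2$.

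For the lower bound I would exhibit a nonvanishing first graded Betti number in the top admissible degree. Because $\nu_0(\Gamma_{n,t}) = \nu$ and $\nu > 1$, the exponent $k = \nu-1$ satisfies $1 \leq k < \nu_0(\Gamma_{n,t})$; as $\Gamma_{n,t}$ is a pure simplicial tree with $\dim > 0$, Theorem~\ref{thm: k-squarefree power non-linearly related} applies and shows that $I_{n,t}^{[\nu-1]}$ is \emph{not} linearly related, i.e. $\beta_{1,p}(I_{n,t}^{[\nu-1]}) \neq 0$ for some $p \neq (\nu-1)t + 1$. On the other hand, Theorem~\ref{theorem: betti number t-path ideal} with the same $k = \nu-1$ forces $\beta_{1,p}(I_{n,t}^{[\nu-1]}) = 0$ unless $p \in \{(\nu-1)t+1,\, \nu t\}$. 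Comparing the two statements leaves $\beta_{1,\nu t}(I_{n,t}^{[\nu-1]}) \neq 0$ as the only possibility, whence $\reg(I_{n,t}^{[\nu-1]}) \geq \nu t - 1$ and therefore $\reg(R/I_{n,t}^{[\nu-1]}) = \reg(I_{n,t}^{[\nu-1]}) - 1 \geq \nu t - 2$. Together with the upper bound this yields the claimed equality.

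The individual computations are routine; the delicate point is the index bookkeeping that ties the hypothesis to the two Betti-number results. One must check that $\nu - 1 \neq \nu_0$ is precisely what places the exponent $\nu-1$ strictly below the restricted matching number, so that the failure of linear relatedness in Theorem~\ref{thm: k-squarefree power non-linearly related} is guaranteed, and simultaneously that $\nu t$ is exactly the top admissible degree $(k+1)t$ in Theorem~\ref{theorem: betti number t-path ideal}, so that the two results jointly pin down the single surviving Betti number $\beta_{1,\nu t}$. I expect no genuine obstacle beyond this alignment of indices and the short ceiling computation establishing $\nu_1 = 2$.
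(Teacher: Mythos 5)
Your proposal is correct and follows essentially the same route as the paper's own proof: derive $\nu_0=\nu$ from Proposition~\ref{prop:difference of matchings}, get the lower bound by combining Theorem~\ref{thm: k-squarefree power non-linearly related} (non-linear relatedness) with Theorem~\ref{theorem: betti number t-path ideal} to force $\beta_{1,\nu t}\neq 0$, and get the upper bound from Theorem~\ref{thm: upper bound} together with the computation $\nu_1(\Gamma_{2t+r,t})=2$ for $1\leq r\leq t-1$. The index bookkeeping you flag as the delicate point is handled exactly as in the paper.
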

	\begin{proof}
		To simplify the notation, we denote $\nu(\Gamma_{n,t})$ and $\nu_0(\Gamma_{n,t})$ by $\nu$ and $\nu_0$, respectively. Since $\reg(R/I_{n,t}^{[\nu-1]})= \reg (I_{n,t}^{[\nu-1]})-1$, it is enough to show that $\reg (I_{n,t}^{[\nu-1]})=\nu t-1$. 
  
  The assumption $\nu-1\neq \nu_0$ together with Proposition~\ref{prop:difference of matchings} gives that $\nu=\nu_0$. Then from Theorem~\ref{thm: k-squarefree power non-linearly related}, we see that $I_{n,t}^{[\nu-1]}$ is not linearly related. Thanks to  Theorem~\ref{theorem: betti number t-path ideal}, we obtain $\beta_{1,\nu t} (I_{n,t}^{[\nu-1]})\neq 0$. Therefore, $\reg (I_{n,t}^{[\nu-1]})\geq  \nu t-1$. Now, we show that $\reg (I_{n,t}^{[\nu-1]})\leq  \nu t-1$.  From Theorem \ref{thm: upper bound} we have 
		\[
		\reg (I_{n,t}^{[\nu-1]})\leq (\nu -2)t+(t-1)\nu_1(\Gamma_{n-(\nu-2)t,t}))+1.
		\]
		It is sufficient to prove that $\nu_1(\Gamma_{n-(\nu-2)t,t})=2$.  There exists some $j$ such that $n=\nu t+j$ with $1\leq j\leq t-1$. Indeed, if $j\geq t$, then there exists a matching whose cardinality is strictly more that $\nu$, which is a contradiction with being $\nu$ the matching number. If $j=0$ then $n=\nu t$, so $\nu_0<\nu$, which is a contradiction with being $\nu=\nu_0$.  Since $n-(\nu-2)t=2t+j$, we obtain $\nu_1(\Gamma_{n-(\nu-2)t,t})=2$, due to Remark~\ref{Remark}, as desired.
	\end{proof}	

    \begin{Corollary}\label{coro: nu-1 squarefree}
    Let $\Gamma_{n,t}$ be the $t$-path simplicial tree of a path graph $P_n$ and $I_{n,t}= I(\Gamma_{n,t})$. Denote the matching number $\nu(\Gamma_{n,t})$ and the restricted matching number $\nu_0(\Gamma_{n,t})$ of $\Gamma_{n,t}$ by $\nu$ and $\nu_0$, respectively. Suppose that $\nu>1$.  Then 
		\[
		\reg\left( \frac{R}{I_{n,t}^{[\nu-1]}}\right)=(\nu-2)t+(t-1)\nu_1 (\Gamma_{n-(\nu-2)t,t})=
		\begin{cases} 
			t\nu- 1 & \text{ if } \nu-1=\nu_0 \\
			t\nu- 2 & \text{ if } \nu-1\neq \nu_0
		\end{cases}
		\]        
    \end{Corollary}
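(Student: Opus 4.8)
The plan is to invoke Proposition~\ref{prop:difference of matchings}, which forces $\nu_0\in\{\nu-1,\nu\}$, to split into the two cases $\nu-1=\nu_0$ and $\nu-1\neq\nu_0$, and then to match the middle expression with the exact regularity in each case using the matching-number formulas of Remark~\ref{Remark}. Writing $n=\nu t+j$ with $0\le j\le t-1$, Remark~\ref{Remark} gives $\nu_0=\nu-1$ precisely when $j=0$ (equivalently $t\mid n$) and $\nu_0=\nu$ precisely when $1\le j\le t-1$; this is the dictionary I will use to read off $\nu_1(\Gamma_{n-(\nu-2)t,t})$.

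First I would dispose of the case $\nu-1\neq\nu_0$. Here Proposition~\ref{prop:difference of matchings} forces $\nu_0=\nu$, hence $1\le j\le t-1$, which is exactly the hypothesis of Proposition~\ref{prop: regularity (nu-1)-squarefree power of line graph}; that result yields $\reg(R/I_{n,t}^{[\nu-1]})=t\nu-2$. To match the middle term I compute $n-(\nu-2)t=2t+j$ and use Remark~\ref{Remark} to obtain $\nu_1(\Gamma_{2t+j,t})=2$, so that $(\nu-2)t+(t-1)\cdot 2=t\nu-2$. Since this common value coincides with the upper bound of Theorem~\ref{thm: upper bound} taken with $k=\nu-2$, the entire chain of equalities holds in this case.

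For $\nu-1=\nu_0$ the decisive observation is that the exponent is the restricted matching number itself, so $I_{n,t}^{[\nu-1]}=I_{n,t}^{[\nu_0]}$. By Theorem~\ref{thm:broomgraph-linearquotient}(ii), for a path graph the ideal $I_{n,t}^{[\nu_0]}$ has linear quotients, hence a linear resolution; being generated in the single degree $\nu_0 t=(\nu-1)t$, its regularity equals that degree, so $\reg(R/I_{n,t}^{[\nu-1]})=(\nu-1)t-1$. On the combinatorial side $j=0$ gives $n-(\nu-2)t=2t$ and $\nu_1(\Gamma_{2t,t})=1$ by Remark~\ref{Remark}, so that $(\nu-2)t+(t-1)=(\nu-1)t-1$ agrees with the regularity and realizes the bound of Theorem~\ref{thm: upper bound}. (Note that this forces the value $t\nu-t-1$ rather than $t\nu-1$ in this case, e.g.\ $t=2$, $n=4$, $\nu=2$ gives $\reg(R/I_{4,2})=1$.)

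The conceptual heart of the argument is that the two cases call for genuinely different mechanisms: when $\nu-1=\nu_0$ one lands exactly on the $\nu_0$-th squarefree power, where the linear-quotients theorem for path graphs pins the regularity to the generating degree with no further work; when $\nu_0=\nu$ the resolution fails to be linear, and the extra unit in the regularity is forced from below by $\beta_{1,\nu t}(I_{n,t}^{[\nu-1]})\neq 0$, the nonvanishing supplied through Proposition~\ref{prop: regularity (nu-1)-squarefree power of line graph} via Theorems~\ref{thm: k-squarefree power non-linearly related} and~\ref{theorem: betti number t-path ideal}. The only routine ingredient is the bookkeeping with Remark~\ref{Remark} confirming that $\nu_1$ of the truncated path is $1$ or $2$ according to whether $t\mid n$.
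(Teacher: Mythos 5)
Your proof is correct and follows the same route as the paper, whose own proof is a one-line citation of Theorem~\ref{thm:broomgraph-linearquotient} and Proposition~\ref{prop: regularity (nu-1)-squarefree power of line graph}; you have simply made explicit the case split via Proposition~\ref{prop:difference of matchings} and the bookkeeping with Remark~\ref{Remark} that reconciles the two expressions. Your observation that the displayed value $t\nu-1$ in the case $\nu-1=\nu_0$ should read $t\nu-t-1=t(\nu-1)-1$ is also right: this is the value forced by the linear resolution of $I_{n,t}^{[\nu_0]}$ (generated in degree $(\nu-1)t$), it agrees with the middle expression since $n=\nu t$ gives $\nu_1(\Gamma_{2t,t})=1$, and it is the value the paper itself uses when it later invokes this corollary in the proof of Theorem~\ref{thm: lower bound} (where $n=(k+1)t$ yields $\reg(R/I_{n,t}^{[k]})=kt-1$), so the $t\nu-1$ in the statement is a typo rather than a defect in your argument.
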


    \begin{proof}
        The claim follows from Theorem \ref{thm:broomgraph-linearquotient} and Proposition \ref{prop: regularity (nu-1)-squarefree power of line graph}. 
    \end{proof}

 	  In order to prove a lower bound for $\reg(R/I_{n,t}^{[k]})$, for $1\leq k+1<\nu(\Gamma_{n,t})$, we need to introduce an equivalent form of Hochster's formula (\cite[Theorem 2.8]{AF}). Let  $\Delta$ be a simplicial complex. An orientation on $\Delta$ is a linear order $<$ on the vertex set of $\Delta$. In such a case, $(\Delta,<)$ is said to be an \textit{oriented} simplicial complex. Let $\Delta$ be a $d$-dimensional oriented simplicial complex with vertex set $V(\Delta)$ and $i\in \{1,\dots,d\}$. A face $F=\{v_1,\dots, v_{i+1}\}$, with $v_1<\dots <v_{i+1}$, is said to be \textit{oriented}; in such a case, we write $F=[v_1,\dots, v_{i+1}]$. We denote by $C_i(\Delta)$ the $K$-vector space generated by all the oriented $i$-dimensional faces of $\Delta$. 
 	  The augmented oriented chain complex of $\Delta$ is the complex 
 	\[
 	0 \xrightarrow[]{ \ \ } C_d(\Delta) \xrightarrow[]{ \ \delta_d\ } C_{d-1}(\Delta) \xrightarrow[]{ \delta_{d-1}} \dots \xrightarrow[]{ \ \ } C_1(\Delta) \xrightarrow[]{\ \delta_1\ }  C_0(\Delta) \xrightarrow[]{\ \delta_0\ } K \xrightarrow[]{ \ \ } 0, 
 	\]
 	where $\delta_0(v)=1$ for all $v\in V(\Delta)$ and, for any $1\leq i \leq d$, the map $\delta_i : \; C_i(\Delta) \rightarrow C_{i-1}(\Delta)$ acts on basis elements as follows: 
 	\[
 	\delta_i \left( 
 	\left[
 	v_{1},\ldots, v_j, \ldots, v_{i+1}
 	\right]\right)
 	=
 	\sum_{j=1}^{i+1} (-1)^{j+1}
 	\left[
 	v_1, \ldots, \hat{v_j}, \ldots, v_{i+1}
 	\right],
 	\]
 	where $\hat{v_j}$ means that $v_j$ is removed. Recall that the $i$-th reduced simplicial homology group of $\Delta$ over $K$ is defined as $\tilde{H}_{i} ( \Delta; K)=\ker \delta_{i} / \Im \delta_{i+1}$. For convention, set $\tilde{H}_{-1}(\emptyset;K)=K$ and $\tilde{H}_{i}(\emptyset;K)=0$, for all $i\geq 0$. Moreover, it is well-known that, if $\Delta\neq\emptyset$, then $ \dim_K\tilde{H}_{0}(\Delta;K)$ is one less than the number of the connected components of $\Delta$ (see \cite[Chapter 1-Section 7]{MJ}).
 
Recall that, for any $Y\subseteq V$, an
induced subcollection of $\Delta$ on $Y$, denoted by $\Delta_Y$, is the simplicial complex whose vertex set is a subset of $Y$ and the facet set is $\{F \in \mathcal{F}(\Delta):F \subseteq Y\}$. If $\Delta=\langle F_1,\dots,F_s\rangle$, then we define the complement of a face $F$ of $\Delta$ in $Y$ to be $F^c_Y=Y\setminus F$ and the complement of $\Delta$ in $Y$ as $\Delta^c_Y=\langle (F_1)^c_Y,\dots,(F_s)^c_Y\rangle$. 

From \cite[Theorem 2.8]{AF} we know that, if $I$ is a  squarefree monomial ideal generated in single degree in $K[x_1,\dots,x_n]$, then
\begin{equation}\label{eq:hochster}
	\beta_{i,d} (I)=\sum_{\substack{\Gamma \subseteq \Delta(I)\\ \vert V(\Gamma)\vert =d}} \dim_K \tilde{H}_{i-1} (\Gamma^c_{V(\Gamma)}),
\end{equation}
	where the sum is taken over the induced subcollections $\Gamma$ of the facet complex $\Delta(I)$ which have $d$ vertices.

 \begin{Theorem}\label{thm: lower bound}
		Let $\Gamma_{n,t}$ be the $t$-path simplicial tree of a path graph $P_n$. Let $I_{n,t}= I(\Gamma_{n,t})$. Then for any $1\leq k+1 \leq \nu(\Gamma_{n,t})$, we  have
		\[
		kt+(t-1)\nu_1 (\Gamma_{n-kt,t}) \leq \reg\left( \frac{R}{I_{n,t}^{[k+1]}}\right)	
		\]
	\end{Theorem}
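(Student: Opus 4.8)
The plan is to exhibit a single non-vanishing graded Betti number of $I_{n,t}^{[k+1]}$ with the correct slope and feed it into Hochster's formula~(\ref{eq:hochster}). Set $m=\nu_1(\Gamma_{n-kt,t})$. I aim to prove
\[
\beta_{m-1,(k+m)t}\bigl(I_{n,t}^{[k+1]}\bigr)\neq 0 .
\]
Granting this, $\reg\bigl(I_{n,t}^{[k+1]}\bigr)\ge (k+m)t-(m-1)=kt+(t-1)m+1$, whence $\reg\bigl(R/I_{n,t}^{[k+1]}\bigr)=\reg\bigl(I_{n,t}^{[k+1]}\bigr)-1\ge kt+(t-1)\nu_1(\Gamma_{n-kt,t})$, which is the asserted bound (and, together with Theorem~\ref{thm: upper bound}, yields the equality of Theorem~\ref{thm: equality regularity t-path ideals of path}). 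Since every summand on the right-hand side of~(\ref{eq:hochster}) is non-negative, it suffices to produce one induced subcomplex $\Gamma\subseteq\Delta\bigl(I_{n,t}^{[k+1]}\bigr)$ with $|V(\Gamma)|=(k+m)t$ and $\tilde{H}_{m-2}\bigl(\Gamma^c_{V(\Gamma)};K\bigr)\neq 0$.

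For the subcomplex I would use the vertex set $Y=\{1,\dots,kt\}\cup B_1\cup\cdots\cup B_m$, where the \emph{head} $\{1,\dots,kt\}$ is a packed segment whose induced $t$-path complex has matching number $k$, and $B_1,\dots,B_m$ is a maximum induced matching of the \emph{tail} $\Gamma_{n-kt,t}$, of size $m$ by Remark~\ref{Remark}. I would place these blocks so that one vertex separates the head from $B_1$ and so that the $B_j$ are pairwise gapped. With such a placement the only facets of $\Gamma_{n,t}$ contained in $Y$ are the head facets $F_i\subseteq\{1,\dots,kt\}$ and the blocks $B_1,\dots,B_m$; in particular no facet straddles the head and the tail. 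Taking $\Gamma=\Delta\bigl(I_{n,t}^{[k+1]}\bigr)_Y$ and writing $E_1,\dots,E_k$ for the consecutive blocks tiling the head, every vertex of $Y$ occurs in the $(k+1)$-matching $\{E_1,\dots,E_k,B_j\}$ for a suitable $j$, so $V(\Gamma)=Y$ and $|V(\Gamma)|=(k+m)t$, as required.

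To see $\tilde{H}_{m-2}(\Gamma^c_Y;K)\neq 0$, I would realise a sphere $S^{m-2}$ as a retract of $\Gamma^c_Y$. A subset $S\subseteq Y$ is a face of $\Gamma^c_Y$ exactly when $Y\setminus S$ contains a $(k+1)$-matching. Regarding the blocks as super-vertices $b_1,\dots,b_m$, consider the support map $\pi\colon\Gamma^c_Y\to\langle b_1,\dots,b_m\rangle$ given by $\pi(S)=\{\,j:S\cap B_j\neq\emptyset\,\}$. Since no facet straddles head and tail, any $(k+1)$-matching in $Y\setminus S$ splits into at most $k$ disjoint head facets together with blocks $B_j$ disjoint from $S$; hence $k+(m-|\pi(S)|)\ge k+1$, that is $|\pi(S)|\le m-1$. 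Thus $\pi$ maps $\Gamma^c_Y$ simplicially into $\mathrm{sk}_{m-2}\langle b_1,\dots,b_m\rangle=\partial\Delta^{m-1}\cong S^{m-2}$. Conversely $\iota\colon T\mapsto\bigcup_{j\in T}B_j$ carries every $T$ with $|T|\le m-1$ to a face of $\Gamma^c_Y$, because $Y\setminus\iota(T)$ contains the $(k+1)$-matching formed by the full head matching $E_1,\dots,E_k$ and one block $B_{j_0}$ with $j_0\notin T$; moreover $\pi\circ\iota=\mathrm{id}$. Therefore $\iota_*$ splits $\tilde{H}_{m-2}(S^{m-2};K)=K$ off $\tilde{H}_{m-2}(\Gamma^c_Y;K)$, forcing the latter to be non-zero and completing the main case.

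The step I expect to be genuinely delicate is the boundary case $n-kt\equiv -1\pmod{t+1}$. Here a maximum induced matching of the tail is forced to fill it tightly, starting at its first vertex, so no separating gap exists: $B_1$ abuts the head and $t-1$ straddling facets land inside $Y$. These facets allow a face of $\Gamma^c_Y$ to meet all $m$ blocks, so the bound $|\pi(S)|\le m-1$ fails and the clean retraction breaks down. I would treat this case by computing $\tilde{H}_{m-2}(\Gamma^c_Y;K)$ directly, arguing that the sphere supported on the non-straddling blocks $B_2,\dots,B_m$ together with the packed region $\{1,\dots,(k+1)t\}$ still represents a non-trivial class in degree $m-2$; explicit small instances confirm that the non-vanishing persists. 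It is this parity bookkeeping in the degenerate placement, rather than the homotopy-theoretic core, where the real work lies.
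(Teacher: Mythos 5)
Your generic-case argument is a genuinely different route from the paper's: where the paper exhibits an explicit $(2j-1)$-cycle $\gamma$ supported on a $(2j+1)$-element vertex set and then proves $\gamma\notin\Im\delta$ by a long row-reduction of the boundary matrix, you split a sphere $S^{m-2}$ off the complement complex of a carefully chosen induced subcomplex. That idea is sound in outline, though two repairs are needed even there: your $\pi$ is not a simplicial map (a face contained in the head has $\pi(S)=\emptyset$), and $\iota$ does not preserve simplicial degree, so ``$\iota_*$ splits $K$ off'' does not follow as stated. The standard fix is to send every head vertex to one extra cone vertex $c$ and to note that a face $S$ meeting the head forces $\nu(\mathrm{head}\setminus S)\leq k-1$, hence $\vert\pi(S)\vert\leq m-2$; the image complex is then $\partial\Delta^{m-1}\cup\bigl(\{c\}*\mathrm{sk}_{m-3}\Delta^{m-1}\bigr)$ and the octahedral cycle $\sum_j(-1)^j[v_1,\dots,\hat{v}_j,\dots,v_m]$ with $v_j\in B_j$ survives. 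With that repair the loose case works and is arguably cleaner than the paper's treatment of it.

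The genuine gap is the boundary case you flag yourself, and it is not peripheral --- it is where the whole theorem lives. Your configuration needs a span of $kt+m(t+1)$ vertices (head, one separating vertex, then $m$ blocks with $m-1$ separating vertices), and when $n-kt=q'(t+1)+t$ this equals $n+1$, so for $k\geq 1$ the construction never fits at the minimal $n$ realizing each value of $m=\nu_1(\Gamma_{n-kt,t})$. The logical dependence also runs the wrong way for you: every $n$ with $n-kt\not\equiv t$ modulo $t+1$ reduces to the tight case by restricting to an induced subcomplex on the first $n_0$ vertices and summing in Hochster's formula (this is exactly the paper's Case (2)), but the tight case cannot be recovered from the loose ones. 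So you have proved the half that the paper deduces in three lines and sketched the half that occupies its entire Case (1). Your fallback set $Y=\{1,\dots,(k+1)t\}\cup B_2\cup\cdots\cup B_m$ does fit and does work in small examples, but once straddling facets are present the clean bound $\vert\pi(S)\vert\leq m-1$ fails, the complement complex is no longer an inflated simplex boundary, and ``explicit small instances confirm'' is not an argument. Establishing $\tilde{H}_{m-2}\neq 0$ there is precisely the content of the paper's explicit cycle on $\{kt+h(t+1),\,kt+h(t+1)+1: h=0,\dots,j-1\}\cup\{n_0\}$ together with the rank comparison of the boundary matrix against its augmentation by $\mathbf{b}_\gamma$; until you supply an argument of comparable substance for that case, the theorem is not proved.
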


        \begin{proof}
          By replacing $k+1$ with $k$, it is equivalent to show for any $1\leq k \leq \nu(\Gamma_{n,t})$, the following inequality 
            \begin{equation}\label{ineq}
		(k-1)t+(t-1)\nu_1 (\Gamma_{n-(k-1)t,t}) \leq \reg\left( \frac{R}{I_{n,t}^{[k]}}\right).	
		\end{equation}
   Due to Lemma~\ref{lem:regofpath}, it is enough to consider the case when $k\geq 2$. Fix $k,t\in \NN$ with $k,t\geq 2$. We divide the discussion into distinct cases depending on the value of $n$. First, observe that $kt \leq n$ due to Remark~\ref{Remark} and the assumption $k\leq \nu(\Gamma_{n,t})$. 

   First, we consider the case when $kt\leq n \leq  kt+t$. If $kt\leq n < kt+t$, then $\nu(\Gamma_{n,t})=k$ and thanks to Theorem~\ref{thm:broomgraph-linearquotient} (i) we obtain $\reg(R/I_{n,t}^{[k]})=kt-1$.
 If $n=kt+t$, then $\nu(\Gamma_{n,t})=k+1$, and  due to Corollary \ref{coro: nu-1 squarefree}, we get $\reg(R/I_{n,t}^{[k]})=kt-1$. On the other hand, if $kt\leq n \leq kt+t$, then due to Remark~\ref{Remark}, we have 
$\nu_1 (\Gamma_{n-(k-1)t,t}) =1$ which gives $(k-1)t+(t-1)\nu_1 (\Gamma_{n-(k-1)t,t}) =kt-1$. Hence the inequality in (\ref{ineq}) holds when $kt\leq n \leq kt+t$. 

Now, let $kt+j(t+1) \leq n\leq kt+j(t+1)+t$, for some  $j\geq 1$. From here on, we separate the discussion into two cases, namely Case(1): $n=kt+j(t+1)$, and Case(2): $kt+j(t+1) < n\leq kt+j(t+1)+t$. To simplify the discussion, we will argue on $\reg (I_{n,t}^{[k]})$ and use $\reg(R/I_{n,t}^{[k]})= \reg (I_{n,t}^{[k]})-1$ to make the final conclusion. 
\medskip
     
     Case (1): Let  $n=kt+j(t+1)$, for some $j$. Hereafter, we set $n_0:=n$. Let $\Delta(I_{n_0,t}^{[k]})$ be the simplicial complex whose facet ideal is $I_{n_0,t}^{[k]}$. We show that $\beta_{i_0,n_0} (I_{n_0,t}^{[k]})\neq 0$ for $i_0=n_0-[(k-1)t+(t-1)\nu_1(\Gamma_{j(t+1)+t,t})+1]$. Since $\nu_1(\Gamma_{j(t+1)+t,t})=j+1$, we obtain after simplifying that $i_0=2j$. Using (\ref{eq:hochster}) gives
		\[
	\beta_{i_0,n_0} (I_{n_0,t}^{[k]})= \dim_K \tilde{H}_{i_0-1} ((\Delta(I_{n_0,t}^{[k]}))^c).
	\]
    We choose a subset $Q$ of $\{1,\dots,n_0\}$ with  $\vert Q\vert =i_0+1=2j+1$ as follows:
		\[
		Q=\{kt+h(t+1),kt+h(t+1)+1:h=0,\dots,j-1\}\cup \{n_0\}.
		\]
		Consider
		\[\gamma=\sum_{r=1}^{i_0+1} (-1)^{r+1}
		\left[
		v_1, \ldots, \hat{v_r}, \ldots, v_{i_0+1}
		\right],\]
		where $v_p\in Q$ for all $p=1,\dots,i_0+1$ and $v_p<v_{p+1}$ for all $p=1,\dots,i_0$. First, we prove that $\gamma\in \ker \delta_{i_0-1}$. In order to make sense that $\delta_{i_0-1}$ acts on $\gamma$, we need $\gamma \in C_{i_0-1}((\Delta(I_{n_0,t}^{[k]}))^c)$. We begin by proving $$\left[
		v_1, \ldots, \hat{v_r}, \ldots, v_{i_0+1}
		\right]\in C_{i_0-1}((\Delta(I_{n_0,t}^{[k]}))^c)$$ for all $r=1,\dots,i_0+1$. We consider the following three cases.
		\begin{itemize}
			\item If $r=1$, then $\left[
			\hat{v_1}, {v_2}, \ldots, v_{i_0+1}
			\right]$ belongs to $C_{i_0-1}((\Delta(I_{n_0,t}^{[k]}))^c)$ because $v_1=kt$ and $\{1,\dots,kt\}$ is a facet of $\Delta(I_{n_0,t}^{[k]})$. 
            \item If $r=i_0+1$, then $\left[{v_1}, \ldots, v_{i_0},\hat{v}_{i_0+1}	\right]\in C_{i_0-1}((\Delta(I_{n_0,t}^{[k]}))^c)$ because $v_{i_0+1}= n_0$ and $\{1,\dots,(k-1)t\}\cup\{n_0-t+1,\dots,n_0\}$ is a facet of $ \Delta(I_{n_0,t}^{[k]})$.            
             \item 
             Now, let $1<r<i_0+1$. If $v_r=kt+h(t+1)$ for some $h\in \{1,\dots,j-1\}$. Then
                \[v_{r-1}=kt+(h-1)(t+1)+1= v_r-t,\]
                and $\{v_{r-1}+1,\dots,v_r\}$ is a path on $t$ vertices. Therefore $\left[v_1, \ldots, \hat{v_r}, \ldots, v_{i_0+1} \right]\in C_{i_0-1}((\Delta(I_{n_0,t}^{[k]}))^c)$ because $\{1,\dots,(k-1)t\}\cup\{v_{r-1}+1,\dots,v_r\}$ is a facet of $\Delta(I_{n_0,t}^{[k]})$. 
                
                 The case when $v_r=kt+h(t+1)+1$, for some $h\in \{0,\dots,j-1\}$, can be argued in a  similar way. In fact, in this case we have  $v_{r+1}=kt+(h+1)(t+1)= v_r+t$. Then $\left[v_1, \ldots, \hat{v_r}, \ldots, v_{i_0+1} \right]\in C_{i_0-1}((\Delta(I_{n_0,t}^{[k]}))^c)$ since $\{1,\dots,(k-1)t\} \cup\{v_{r},\dots,v_{r+1}-1\}$ is a facet of $\Delta(I_{n_0,t}^{[k]})$. 
\end{itemize}
		Therefore $\gamma \in C_{i_0-1}((\Delta(I_{n_0,t}^{[k]}))^c)$. Moreover, 
    \begin{align*}
        &\delta_{i_0-1}(\gamma)
            = \sum_{r=1}^{i_0+1} (-1)^{r+1}
		\delta_{i_0-1}(\left[ v_1, \ldots, \hat{v_r},\dots,v_{i_0+1} \right])=\\
	&	= \sum_{r=1,\dots,i_0+1} (-1)^{r+1}
		\left(\sum_{\substack{s =1,\dots,i_0\\
		    	s\neq r}}(-1)^{s+1}
		\left[ v_1, \ldots, \hat{v_r},\dots, \hat{v_s} \ldots, v_{i_0+1} \right]\right).
    \end{align*}
	
	It is easy to check that, for fixed $r$ and $s$ the coefficient of $[v_1,\dots, \hat{v_r},\dots,\hat{v_s},\dots,v_{i_0+1}]$ is $(-1)^{r+s+2}-(-1)^{r+s+1}=0$. Hence $\gamma\in \ker \delta_{i_0-1}$.

 	We claim that $\gamma$ does not belong to $\Im\delta_{i_0}$. Indeed, if this were the case, we would have $\tilde{H}_{i_0-1} ( (\Delta(I_{n_0,t}^{[k]}))^c ; K)\neq 0$. Consequently, this would imply that $\beta_{i_0,n_0} (I_{n_0,t}^{[k]})\neq 0$, leading us to the inequality $(k-1)t+(t-1)\nu_1 (\Gamma_{n_0-(k-1)t,t}) \leq \reg (I_{n_0,t}^{[k]})$, as required.\\ 	
 	We now proceed to prove that $\gamma$ is not in $\Im \delta_{i_0}$, that is, there does not exist any element in $C_{i_0}((\Delta(I_{n_0,t}^{[k]}))^c)$ whose image under the boundary map $\delta_{i_0}$ is $\gamma$. We begin by noting that $[v_1,\dots,v_{i_0+1}]\not\in C_{i_0}((\Delta(I_{n_0,t}^{[k]}))^c)$ since, by the definition of $Q$, there does not exist a $k$-matching $S=\{S_1,\dots, S_k\}$ in the induced subgraph of $P_{n_0}$ on $\{1,\dots,n_0\}\setminus Q$ such that $\cup_{i=1}^k S_i$ is a facet of $\Delta(I_{n_0,t}^{[k]})$. Next, we define $\mathcal{B}_{i_0-1}$ as the basis of $C_{i_0-1}((\Delta(I_{n_0,t}^{[k]}))^c)$, consisting of all oriented $(i_0-1)$-dimensional faces of $(\Delta(I_{n_0,t}^{[k]}))^c$. Similarly, $\mathcal{B}_{i_0}$ represents the basis of $C_{i_0}((\Delta(I_{n_0,t}^{[k]}))^c)$. Let $A$ be the transformation matrix of $\delta_{i_0}$. Recall that each column of $A$ represents the coordinate vector of an element in $\mathcal{B}_{i_0}$ expressed in terms of the basis $\mathcal{B}_{i_0-1}$, consequently the entries of $A$ are either $1,-1$ and $0$. Furthermore, let $\textbf{x}$ represent the column vector of the components of an element in $(\Delta(I_{n_0,t}^{[k]}))^c$ with respect to $\mathcal{B}_{i_0}$. With this notation, we can write $\delta_{i_0}(\mathbf{x})=A\mathbf{x}$. Let $\mathbf{b}_\gamma$ be the column vector of the components of $\gamma$ with respect to $\mathcal{B}_{i_0-1}$. We aim to show that the linear system $A \mathbf{x} = \mathbf{b}_\gamma$ has no solutions. In other words, by Rouch\'e-Capelli Theorem, the ranks of the coefficient matrix $A$ and the augmented matrix $(A \vert \mathbf{b}_\gamma)$ are different. Let $R$ be the row of $(A\vert  \mathbf{b}_\gamma)$ corresponding to $[v_2, \dots, v_{i_0+1}]$. The row $R$ has exactly $(t-1)\frac{i_0}{2} + 1$ non-zero entries; specifically, in $R$ there is: 
 \begin{itemize}
 	\item $1$ in the column corresponding to $\mathbf{b}_\gamma$;
 	\item $-1$ in the column, denoted by $C_{j,a}$, which corresponds to the element of $\mathcal{B}_{i_0}$ of the form $[v_2, \dots,v_{j},\dots, a,\dots, v_{j+1}, \dots, v_{i_0+1}]$, for all even $j$ in $\{2,\dots,i_0\}$ and for all $a\in \NN$ with $v_{j} < a < v_{j+1}$; 
 	\item $0$ in the remaining columns.
 \end{itemize}
For example, if we consider the second squarefree power of the $3$-path ideal of the path graph $P_{14}$, we have $Q = \{6,7,10,11,14\}$ and $\gamma = [7,10,11,14] - [6,10,11,14] - [6,7,11,14] + [6,7,10,14] - [6,7,10,11]$. In the augmented matrix $(A \vert \mathbf{b}_\gamma)$ of $\delta_{4}$, the five non-zero entries of $R$ are given in Table \ref{Example row R}, specifically: $1$ appears in the column corresponding to $\mathbf{b}_\gamma$, and $-1$ appears in the columns corresponding to the elements $[7,8,10,11,14]$, $[7,9,10,11,14]$, $[7,10,11,12,14]$, and $[7,10,11,13,14]$, denoted by $C_{2,8}$, $C_{2,9}$, $C_{4,12}$, and $C_{4,13}$, respectively. Moreover, $0$ appears in the remaining entries of $R$, which are not illustrated due to the large size of the matrix.

\begin{table}[h!]
	\centering
	\footnotesize
	\scalebox{0.9}{
		\begin{tabular}{c|c|c|c|c|c}
			& 7 8 10 11 14 & 7 9 10 11 14 & 7 10 11 12 14 & 7 10 11 13 14 & $\mathbf{b}_\gamma$     \\ \hline
			7 10 11 14  & -1           & -1           & -1            & -1            & 1                     
		\end{tabular}
	}
	\caption{The non-zero entries of $R$ in $(A \vert \mathbf{b}_\gamma)$.}
	\label{Example row R}	
\end{table}

 To prove our claim, it is enough to show that we can perform elementary row operations within the augmented matrix $(A \vert \mathbf{b}_\gamma)$ to reduce $R$ to a row with zero entries everywhere except for a $1$ in the column corresponding to $\mathbf{b}_\gamma$. This transformation can be achieved by utilizing only certain specific rows of $(A \vert \mathbf{b}_\gamma)$, which will be delineated in the following description. For all $s\in [i_0/2]$, let $R(j_1,\dots,j_s;a_1,\dots,a_s)$ be the row of $(A \vert \mathbf{b}_\gamma)$ corresponding to the element of $\mathcal{B}_{i_0}$ of the form $$[v_2,\dots,v_{j_1},\dots,a_1,\dots, \widehat{v_{j_1+1}},\dots, v_{j_s-1},v_{j_s}, \dots,a_s,\dots,\widehat{v_{j_s+1}},\dots,v_{i_0+1}],$$	for all even $j_1,\dots,j_s$ in $\{2, \dots, i_0\}$ with $j_1< \dots <j_s$ and $a_1,\dots,a_s\in \NN$ with $v_{j_h} <a_h < v_{j_h+1}$ for each $h\in [s]$. Hence, the following hold within $R(j_1,\dots,j_s;a_1,\dots,a_s)$: 
	\begin{enumerate}
		\item there is $0$ in the column corresponding to $\mathbf{b}_\gamma$;
		\item  for each  $\delta \in \left( \{v_2, v_2+1,\dots, v_{i_0+1}\} \setminus Q \right) \cup \{v_{j_1+1},\dots,v_{j_s+1}\}$, let $C(j_1,\dots,j_s;a_1,\dots,a_s;\\ \delta)$ be the column of $(A \vert \mathbf{b}_\gamma)$ corresponding to the element of $\mathcal{B}_{i_0}$ of the form $$[v_2,\dots,\delta,\dots,v_{j_1},\dots,a_1,\dots, \widehat{v_{j_1+1}},v_{j_1+2},\dots, v_{j_s-1},v_{j_s}, \dots,a_s,\dots,\widehat{v_{j_s+1}},\dots,v_{i_0+1}].$$ The following value appears in $C(j_1,\dots,j_s;a_1,\dots,a_s;\delta)$:
		\begin{itemize}
			\item $-1$, if $v_{j_h+2} < \delta <a_{h+1}$ for some $h=0,\dots,s$ (where $j_0:=0$ and $a_{s+1}:=v_{i_0+1})$;
			\item $1$, if $a_h < \delta \leq v_{j_h+1}$ for some $h=1,\dots,s$;
		\end{itemize}
		\item All other entries are zero.
	\end{enumerate}

For example, in Table \ref{Table 2}, we present the relevant rows and columns utilized in the reduction of $R$ when considering the second squarefree power of the $3$-path ideal of $P_{14}$. Given that $i_0 = 4$, it follows that $s \in \{1,2\}$; specifically, the first four rows correspond to $s = 1$, while the last four rows to $s = 2$.

\begin{table}[h!]
	\footnotesize
	\scalebox{0.71}{
	\begin{tabular}{c|c|c|c|c|c|c|c|c|c|c}
		& 7 9 11 12 13 & 7 8 9 11 13 & 7 8 9 11 12 & 7 8 11 12 13 & 7 8 11 12 14 & 7 8 11 13 14 & 7 8 9 11 14 & 7 9 11 12 14 & 7 9 11 13 14 & 7 9 11 12 13 \\ \hline
		\rowcolor{gray!10}
		7 9 11 12  & 1           &  0           & -1          & 0             & 0             &    0          &   0          & 1            &    0          & 1            \\ \hline
		7 8 11 13  &   0           & 1           &   0          & -1           &   0           & 1            &   0          &   0           &     0         &    0          \\ \hline
		\rowcolor{gray!10}
		7 8 11 12  &       0       &       0      & 1           & 1            & 1            &      0        &     0        &       0       &   0           &   0           \\ \hline
		7 9 11 13  & -1           & -1          &      0       &     0         &    0          &   0           &      0       &   0           & 1            & -1           \\ \hline
		&              &           &             &             &              &             &            &              &              &              \\ \hline
		\rowcolor{gray!10}
		7 8 11 14  &        0      &      0       &     0        &    0          & -1           & -1           & 1           &  0            & 0             &    0          \\ \hline
		7 9 11 14  &     0         &      0       &       0      &  0            &    0          &     0         & -1          & -1           & -1           &   0           \\ \hline
		\rowcolor{gray!10}
		7 10 11 12 &    0          &      0       &     0        &    0          &        0      &      0        &    0         &   0           &      0        &       0       \\ \hline
		7 10 11 13 &      0        &     0        &    0         &     0         &   0           &        0      &   0          &         0     &     0         &    0          \\
	\end{tabular}%
}\\
%
%
%
\vspace{0.2cm}
	\scalebox{0.71}{
	\begin{tabular}{c|c|c|c|c|c|c|c|c|c|c}
		7 8 9 11 13 & 7 8 10 11 12 & 7 9 10 11 12 & 7 8 10 11 13 & 7 9 10 11 13 & 7 10 11 12 13 & 7 8 10 11 14 & 7 9 10 11 14 & 7 10 11 12 14 & 7 10 11 13 14 & $\mathbf{b}_\gamma$\\ \hline
		\rowcolor{gray!10}
		0&    0          & 1            &      0        &       0       &          0     &        0      &    0          &     0          &      0         & 0            \\ \hline
		1           &    0          &       0       & 1            &     0         &      0         &      0        &     0         &   0            &       0        & 0            \\ \hline
		\rowcolor{gray!10}
		0& 1            &      0        &     0         &       0       &        0       &      0        &     0         &     0         &     0          & 0            \\ \hline
		-1          &    0          &        0      &      0        & 1            &     0          &    0          &     0         &      0         &      0         & 0            \\ \hline
		&              &              &              &              &               &              &              &               &               &              \\ \hline
		\rowcolor{gray!10}
		0&     0         &     0         &        0      &      0        &       0        & 1            &  0            &       0        &       0        & 0            \\ \hline
		0&       0       &          0    &       0       &      0        &        0       &      0        & 1            &       0        &    0           & 0            \\ \hline
		\rowcolor{gray!10}
		0& -1           & -1           &      0        &       0       & 1             &     0         &      0        & 1             &      0         & 0            \\ \hline
		0&       0       &       0       & -1           & -1           & -1            &      0        &     0         &       0        & 1             & 0            \\
	\end{tabular}
}
\caption{Rows used for the elementary operations on $R$.}
\label{Table 2}
\end{table}

Our aim is to prove that prove that we can reduce $R$ to a row with zero entries everywhere except for a $1$ in the column corresponding to $\mathbf{b}_\gamma$. This reduction is achieved by successively adding to $R$ the rows $R(j_1,\dots,j_s; a_1,\dots,a_s)$, for $s =1,\dots,i_0/2$, following a stepwise process.

For example, the reader can easily verify that the row $R$ in Table \ref{Example row R}  can be reduced to the desired form by summing the rows from Table \ref{Table 2} to $R$. This process involves first adding the initial four rows, followed by the remaining ones, so that we cancel out $1$ and $-1$ entries in each column to obtain zeros. This example illustrates the key steps of the reduction process. We now present the general procedure. Throughout, we adopt the notation $M(U,V)$ to denote the entry in row $U$ and column $V$ of a matrix $M$.

{\bf Step 1:} For simplicity, set $M=(A \vert \mathbf{b}_\gamma)$. In $M$, we replace $R$ by $R + \sum R(j_1; a_1)$, where the sum ranges over all even $j_1 \in \{2, \dots, i_0\}$ and $a_1 \in \mathbb{N}$ such that $v_{j_1} < a_1 < v_{j_1+1}$. This results in the following:
\begin{enumerate}
	\item The $-1$ at $M(R, C_{j_1, a_1})$ is reduced to zero by adding the $1$ at $M(R(j_1; a_1), C(j_1; a_1; \delta)$), where $\delta = v_{j_1+1}$ (note that $C(j_1; a_1; \delta)$ and $C_{j_1, a_1}$ correspond to the same column in $M$). In our example, the $-1$ in $C_{4,12}$ reduces to $0$ by adding $1$ in $M(R(4;12),(4;12;14))$. 
	\item For every $\delta \in \left( \{v_2, v_2+1,\dots, v_{i_0+1}\}\setminus Q \right)$, the following hold:
	\begin{enumerate}
		\item If $a_1 < \delta < v_{j_1+1}$, the entry at $M(R, C_{j_1, a_1})$ is zero, because the $1$ at $M(R(j_1; a_1), C(j_1; a_1; \delta))$ cancels out the $-1$ at $M(R(j_1, \delta), C(j_1; a_1; \delta))$. For example, $1$ in $M(R(4;12),C(4;12;13))$ cancels out $-1$ in $M(R(4;13),C(4;12;13))$. 
		\item If $\delta < a_1$ or $v_{j_1+1} < \delta < v_{i_0+1}$, the entry at $M(R ,  C(j_1; a_1; \delta))$ is $-1$. For example, see $M(R(4;12),C(4;12;9))$ and $M(R(4;12),C(4;12;8))$.
	\end{enumerate}
\end{enumerate}
	We denote the resulting row and the resulting matrix, after applying these elementary operations, by $R_1$ and $M_1$ respectively.

{\bf Step 2:}  In the matrix $M_1$, we replace $R_1$ by $R_1 + \sum R(j_1,j_2; a_1,a_2)$, where the sum ranges over all even $j_1,j_2 \in \{2, \dots, i_0\}$ with $j_1<j_2$ and $a_1, a_2 \in \mathbb{N}$ such that $v_{j_1} < a_1 < v_{j_1+1}$ and $v_{j_2} < a_2 < v_{j_2+1}$. We obtain the following result:
	\begin{enumerate}
		\item From Step 1 the entry at $M_1(R , C(j_1; a_1; \delta))$ is $-1$, if $\delta < a_1$ (similarly, if $v_{j_1+1} < \delta < v_{i_0+1}$). Thus, there exists an $h \in \{0,\dots,s\}$ such that $v_{j_h} < \delta < v_{j_{h+1}}$, allowing us to cancel $-1$ by adding $1$ from $M_1(R(j_h,j_1;\delta,a_1) ,  C(j_h,j_1;\delta,a_1;v_{j_h+1}))$. For example, $-1$ at $M_1(R(4;12),C(4;12;9))$ is canceled by adding $1$ from $M_1(R(2,4;9,12),C(2,4;9,12;10))$.   
		\item For every $\delta \in \{v_2, v_2+1,\dots, v_{i_0+1}\}\setminus Q $, the following hold:
		\begin{enumerate}
			\item If $a_1 < \delta < v_{j_1+1}$ (or $a_2 < \delta < v_{j_2+1}$), the entry at $M_1(R ,  C(j_1, j_2;a_1,a_2;\delta))$ becomes zero, since the $1$ at $M_1(R(j_1,j_2; a_1,a_2)$, $ C(j_1,j_2; a_1,a_2;\delta))$ cancels the $-1$ at $M_1(R(j_1,j_2; \delta,a_2)$, $ C(j_1,j_2; \delta,a_2;a_1))$. In our example, $-1$ at $M_1(R(2,4;9,12)$, $C(2,4;9,12;8))$ can be reduced to zero by summing with $1$ at $M_1(R(2,4;8,12)$, $ C(2,4;8,12;9))$. 
			\item If $\delta < a_1$ (or $v_{j_1+2} < \delta < a_2$ or $v_{j_2+2} < \delta < v_{i_0+1}$), the entry at $M_{1}(R , C(j_1,j_2;a_1,a_2;\delta))$ remains $-1$. 
		\end{enumerate}
        \end{enumerate}
        After this step, we denote the updated row and matrix by $R_2$ and $M_2$ respectively.
        
{\bf Step $s$ (with $2 < s < i_0/2$):} In the matrix $M_{s-1}$ obtained at the $(s-1)$-th step, we replace the row $R_{s-1}$ by $R_{s-1} + \sum R(j_1,\dots,j_{s}; a_1,\dots,a_{s})$, where the sum ranges over all even $j_1,\dots,j_{s} \in \{2, \dots, i_0\}$ with $j_1<\dots<j_{s}$ and $a_1,\dots,a_{s} \in \mathbb{N}$ such that $v_{j_h} < a_h < v_{j_h+1}$ for $h\in [s]$. As observed in Step 2, the only non-zero entries in $R_s$ are the $-1$ located at $M_s(R_s, C(j_1, \dots, j_s; a_1, \dots, a_s; \delta))$, where either $\delta < a_1$, or $v_{j_h+2} < \delta < a_h$ for some $h \in [s-1]$, or $v_{j_s+2} < \delta < v_{i_0+1}$. 

Now, continue the procedure described above until the final step, which is given for $s=i_0/2$. In our example, the process concludes at the second step since $s = 2$, and no additional $-1$ values arise from this step, meaning that the desired result has been achieved.	
	
{\bf Step $i_0/2$:} For simplicity, let $i_0/2=\ell$. In the matrix $M_{\ell-1}$, we replace $R_{\ell-1}$ by $R_{\ell-1} + \sum R(j_1,\dots,j_{\ell}; a_1,\dots,a_{\ell})$, where the sum ranges over all even $j_1,\dots,j_{\ell} \in \{2, \dots, i_0\}$ with $j_1<\dots<j_{\ell}$ and $a_1,\dots,a_{\ell} \in \mathbb{N}$ such that $v_{j_h} < a_h < v_{j_h+1}$ for $h\in [\ell]$. The only significant case that remains to be analyzed is when $v_{j_{h-1}+2} < \delta < a_h$ for some $h\in [\ell]$ but, in this case, it suffices to proceed analogously to the argument presented in (1) of Step 2.

	This guarantees that summing to $R$ the rows $R(j_1,\dots,j_s; a_1,\dots,a_s)$, for $s=1,\dots,i_0/2$, we get a row with zero entries everywhere except for a single $1$ in the column corresponding to $\mathbf{b}_\gamma$. As a consequence, the coefficient matrix $A$ and the augmented matrix $(A \vert \mathbf{b}_\gamma)$ have different ranks. This completes the proof of our claim.

 Case (2): Now, suppose $kt+j(t+1) < n\leq kt+j(t+1)+t$, for some $j$. Let $\Delta(I_{n,t}^{[k]})$ be the simplicial complex whose facet ideal is $I_{n,t}^{[k]}$. Observe that $\nu_1(\Gamma_{n-(k-1)t,t})=\nu_1(\Gamma_{n_0-(k-1)t,t})$. It is sufficient to show that $\beta_{i_0,n_0} (I_{n,t}^{[k]})\neq 0$, where $i_0$ and $n_0$ are defined in Case 1. From \cite[Theorem 2.8]{AF} we know that
	\[
	\beta_{i_0,n_0} (I_{n,t}^{[k]})=\sum_{\substack{\Gamma \subseteq \Delta(I_{n,t}^{[k]})\\ \vert V(\Gamma)\vert =n_0}} \dim_K \tilde{H}_{i_0-1} (\Gamma^c_{V(\Gamma)}),
	\]
	where the sum is taken over the induced subcollections $\Gamma$ of $\Delta(I_{n,t}^{[k]})$ which have $n_0$ vertices. Take $\Gamma =\Delta(I_{n_0,t}^{[k]})$ and observe that $\Delta(I_{n_0,t}^{[k]})$, defined as in Case (1), is an induced subcollection of $\Delta(I_{n,t}^{[k]})$ having $n_0$ vertices. From Case (1), we know that $\dim_K \tilde{H}_{i_0-1} (\Gamma^c_{V(\Gamma)})\neq 0$, so $\beta_{i_0,n_0} (I_{n,t}^{[k]})\neq 0$, which implies $(k-1)t+(t-1)\nu_1 (\Gamma_{n-(k-1)t,t}) \leq \reg (I_{n,t}^{[k]})$.
 
 In conclusion, we get the desired lower bound.
    \end{proof}
    
We illustrate the proof of above theorem in the following example.

\begin{Example}\em
    Referring to the notation of the proof of Theorem \ref{thm: lower bound}, in Table \ref{Table1} we display the regularity of the third squarefree power of the $3$-path ideal of $P_n$ for $9\leq n\leq 20$. The elements of set $Q$ are displayed by the hollow circles. 
\begin{footnotesize}
    
  \begin{table}[h]
		\centering
		\renewcommand\arraystretch{1.3}{	\begin{tabular}{c|Hc|c}
  
		$n$ & $n_0$ & $\reg(I_{n,3}^{[3]})$  & \\
            \hline

     9 & 9  & 9 & \includegraphics[scale=0.71]{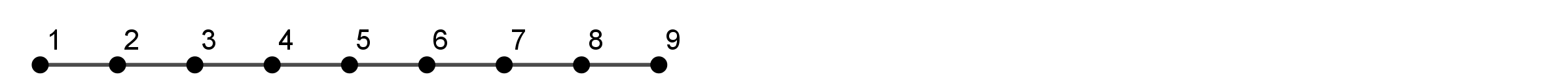}\\
		\hline
    
     10 & 9  & 9 & \includegraphics[scale=0.71]{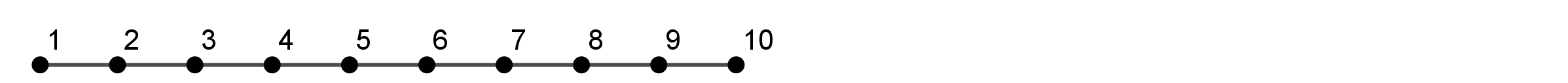}\\
		\hline

     11 & 9  & 9 & \includegraphics[scale=0.71]{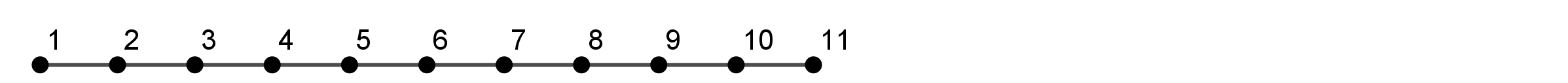}\\
		\hline

    12 & 9  & 9 & \includegraphics[scale=0.71]{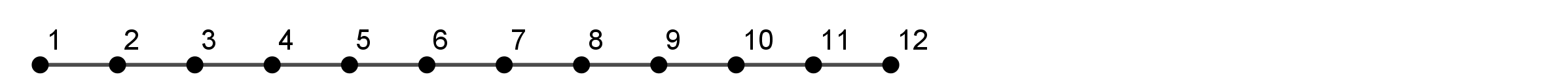}\\
		\hline

    13 & 13  & 11 & \includegraphics[scale=0.71]{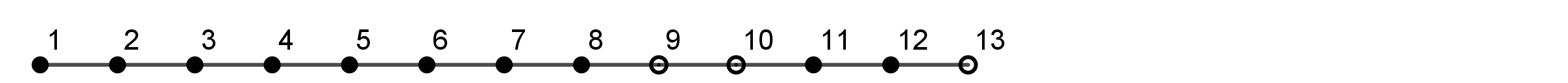}\\
		\hline
    
    14 & 13  & 11 & \includegraphics[scale=0.71]{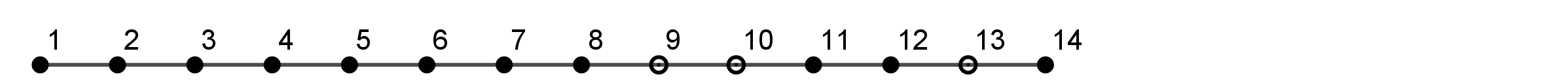}\\
		\hline
   
    15 & 13  & 11& \includegraphics[scale=0.71]{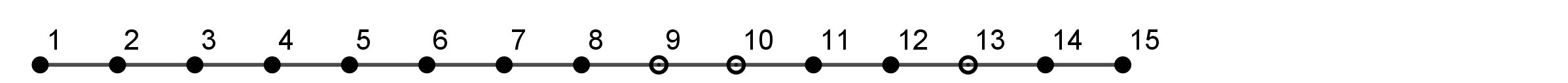}\\
		\hline
    
    16 & 13  & 11 & \includegraphics[scale=0.71]{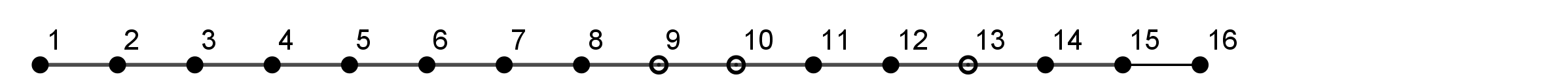}\\
		\hline
    17 & 17  & 13 & \includegraphics[scale=0.71]{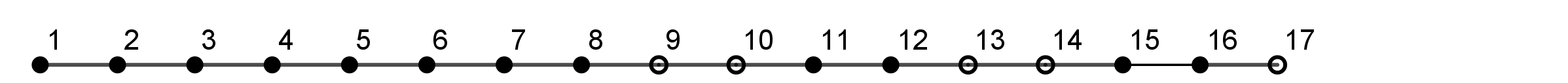}\\
		\hline
   18 & 17  & 13 & \includegraphics[scale=0.71]{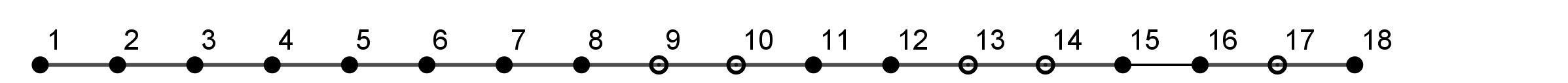}\\
		\hline
  
  19 & 17  & 13 & \includegraphics[scale=0.71]{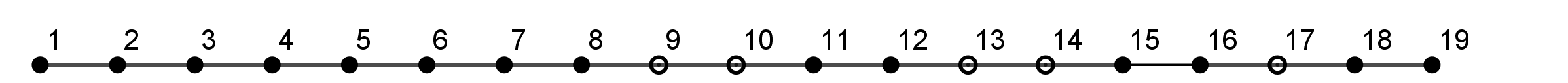}\\
		\hline
 
 20 & 17  & 13 & \includegraphics[scale=0.71]{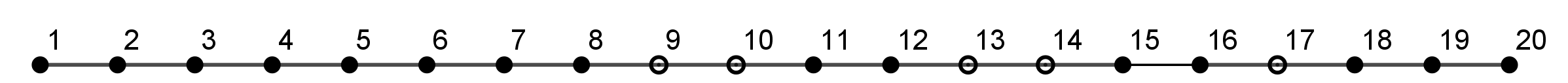}\\
		
		\end{tabular}}
		\caption{Regularity and elements of $Q$ for $I_{n,3}^{[3]}$}
		\label{Table1}
	\end{table}
 \end{footnotesize}

For instance, take $n=17$. Since $17=kt+2(t+1)$ for $k=t=3$, we are in Case (1) of the proof of above theorem. Here we have $i_0=4$, and  $Q=\{9,10,13,14,17\}$, and $\gamma=[10,13,14,17]-[9,13,14,17]+[9,10,14,17]-[9,10,13,17]+[9,10,13,14]$. Moreover,
 \begin{itemize}
     \item $[10,13,14,17]\in C_{3}((\Delta(I_{17,3}^{[3]}))^c)$ because $\{1,\dots,9\}$ is a facet of $\Delta(I_{17,3}^{[3]})$;
     \item $[9,13,14,17]\in C_{3}((\Delta(I_{17,3}^{[3]}))^c)$ because $\{1,\dots,6\}\cup\{10,11,12\}$ is a facet of $\Delta(I_{17,3}^{[3]})$;
     \item $[9,10,14,17]\in C_{3}((\Delta(I_{17,3}^{[3]}))^c)$ because $\{1,\dots,6\}\cup\{11,12,13\}$ is a facet of $\Delta(I_{17,3}^{[3]})$; 
     \item $[9,10,13,17]\in C_{3}((\Delta(I_{17,3}^{[3]}))^c)$ because $\{1,\dots,6\}\cup\{14,15,16\}$ is a facet of $\Delta(I_{17,3}^{[3]})$; 
     \item $[9,10,13,14]\in C_{3}((\Delta(I_{17,3}^{[3]}))^c)$ because $\{1,\dots,6\}\cup\{15,16,17\}$ is a facet of $\Delta(I_{17,3}^{[3]})$.
 \end{itemize}
 It is easy to see that $\delta_{3}(\gamma)=0$, so $\gamma\in  \ker\delta_3$. Moreover, $[9,10,13,14,17]$ does not belong to $C_{4}((\Delta(I_{17,3}^{[3]}))^c)$ because there is no $3$-matching in the induced subgraph of $P_{17}$ on $\{1,\dots,8\}\cup\{11,12\}\cup \{15,16\}$. Finally, it can be verified by using \texttt{Macaulay2} (\cite{M2}, \cite{HSZ}) that $\gamma$ is not in $\Im{\delta_{4}}$, since the rank of the representative matrix of $\delta_{4}$ differs from that of the corresponding augmented matrix. Therefore, $\tilde{H}_{3} ( (\Delta(I_{17,3}^{[3]}))^c; K)\neq 0$ and so $\beta_{4,17} (I_{17,3}^{[3]})\neq 0$.
 
 Now, let $n=19$. We know that \[
	\beta_{4,17} (I_{19,3}^{[3]})=\sum_{\substack{\Gamma \subseteq \Delta(I_{19,3}^{[3]})\\ \vert V(\Gamma)\vert =17}} \dim_K \tilde{H}_{3} (\Gamma^c_{V(\Gamma)}),
	\]
	where the sum is taken over the induced subcollections $\Gamma$ of $\Delta(I_{19,3}^{[3]})$ which have $17$ vertices. Observe that $\Delta(I_{17,3}^{[3]})$ is an induced subcollection of $\Delta(I_{19,3}^{[3]})$ on $\{1,\dots,17\}$ vertices and $\tilde{H}_{3} ( (\Delta(I_{17,3}^{[3]}))^c; K)\neq 0$ as discussed before. Therefore $\beta_{4,17} (I_{19,3}^{[3]})\neq 0$.
 
\end{Example}

  Combining Theorem~\ref{thm: upper bound} and Theorem~\ref{thm: lower bound}, we obtain the following nice combinatorial description of the regularity of squarefree powers of $t$-path ideals of path graphs.
  
     \begin{Theorem}\label{thm: equality regularity t-path ideals of path}
    
    Let $\Gamma_{n,t}$ be the $t$-path simplicial tree of a path graph $P_n$ and $I_{n,t}= I(\Gamma_{n,t})$. Then for any $1\leq k+1 \leq \nu(\Gamma_{n,t})$, we  have
		\[
		\reg\left( \frac{R}{I_{n,t}^{[k+1]}}\right)= kt+(t-1)\nu_1 (\Gamma_{n-kt,t})=kt+\reg  \left( \frac{R}{I_{n-kt,t}}\right).
		\]
	\end{Theorem}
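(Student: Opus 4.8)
The plan is to derive the claimed chain of equalities by sandwiching $\reg(R/I_{n,t}^{[k+1]})$ between the two bounds already established in this section, and then identifying the common value with $\reg(R/I_{n-kt,t})$ via the single-ideal regularity formula. Concretely, I would assemble three ingredients that all precede this statement: the upper bound of Theorem~\ref{thm: upper bound}, the lower bound of Theorem~\ref{thm: lower bound}, and Lemma~\ref{lem:regofpath}.

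First I would invoke Theorem~\ref{thm: upper bound}, which for every $1 \leq k+1 \leq \nu(\Gamma_{n,t})$ gives
\[
\reg\left( \frac{R}{I_{n,t}^{[k+1]}}\right) \leq kt + (t-1)\nu_1(\Gamma_{n-kt,t}).
\]
Next I would invoke Theorem~\ref{thm: lower bound}, which supplies the matching reverse inequality
\[
kt + (t-1)\nu_1(\Gamma_{n-kt,t}) \leq \reg\left( \frac{R}{I_{n,t}^{[k+1]}}\right).
\]
Together these force the equality $\reg(R/I_{n,t}^{[k+1]}) = kt + (t-1)\nu_1(\Gamma_{n-kt,t})$, which is the first of the two asserted equalities.

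Finally, to match the last expression in the statement, I would apply Lemma~\ref{lem:regofpath} to the path graph $P_{n-kt}$. This is legitimate because $k+1 \leq \nu(\Gamma_{n,t}) = \lfloor n/t\rfloor$ forces $n \geq (k+1)t$, hence $n-kt \geq t$, so that $P_{n-kt}$ is a genuine path on at least $t$ vertices. The lemma then yields $\reg(R/I_{n-kt,t}) = (t-1)\nu_1(\Gamma_{n-kt,t})$, and substituting gives $kt + (t-1)\nu_1(\Gamma_{n-kt,t}) = kt + \reg(R/I_{n-kt,t})$, completing the chain.

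The theorem itself is therefore an immediate synthesis; all the substance lives in the two bounding results. If I had to produce those from scratch, the main obstacle would clearly be the lower bound. The upper bound in Theorem~\ref{thm: upper bound} follows from a fairly standard induction on $n$, peeling off a good leaf and repeatedly applying the regularity short exact sequence together with the colon/sum identities of Lemma~\ref{reduction}. By contrast, Theorem~\ref{thm: lower bound} requires exhibiting a specific nonvanishing graded Betti number through Hochster's formula (\ref{eq:hochster}): one must choose the right vertex set $A$, construct an explicit cycle $\gamma$ in the reduced chain complex of the complement $(\Delta(I_{n_0,t}^{[k]}))^c$, verify $\gamma \in \ker\delta_{i_0-1}$, and then prove $\gamma \notin \Im\delta_{i_0}$ by a delicate rank comparison (Rouch\'e--Capelli) on the boundary matrix, reducing a designated row by a carefully organized sum of other rows. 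That combinatorial/linear-algebraic verification is where the genuine difficulty resides.
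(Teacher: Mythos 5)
Your proposal is correct and matches the paper exactly: the authors prove this theorem by simply combining Theorem~\ref{thm: upper bound} and Theorem~\ref{thm: lower bound}, with the identification $(t-1)\nu_1(\Gamma_{n-kt,t}) = \reg(R/I_{n-kt,t})$ already supplied by Lemma~\ref{lem:regofpath} inside the upper-bound statement. Your assessment that the real content lies in the two bounding theorems, especially the lower bound via Hochster's formula, is also accurate.
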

In \cite[Theorem 4.7]{KK1}, the authors provided a formula for the regularity of the powers of $t$-path ideals of broom graphs, which also holds for path graphs. According to their formula,
	\[
		\reg\left( \frac{R}{I_{n,t}^{k+1}}\right)= kt+\reg  \left( \frac{R}{I_{n,t}}\right).
		\]
By comparing the above formula with the regularity of the squarefree powers of $I_{n,t}$ computed in Theorem~\ref{thm: equality regularity t-path ideals of path}, we observe that for any $ 2\leq k+1 \leq \nu(\Gamma_{n,t})$, we have  $\reg\left( \frac{R}{I_{n,t}^{[k+1]}}\right)= \reg\left( \frac{R}{I_{n,t}^{k+1}}\right)$ if and only if $\reg  \left( \frac{R}{I_{n-kt,t}}\right) = \reg  \left( \frac{R}{I_{n,t}}\right)$. Using Lemma~\ref{lem:regofpath}, we conclude that the last equality holds if and only if $k=1$ and 
$n=m(t+1)+t-1$ for some positive integer $m$. 


    \section{Towards Future Research}\label{Section: Towards future works}

In this section we propose some open questions, which can inspire new  work related to squarefree powers of monomial ideals.
\medskip

 1) We know that $I(\Delta)^{[\nu(\Delta)]}$ has a linear resolution if $\Delta$ is a simplicial tree with the intersection property (see  Theorem~\ref{thm: simplicial tree with i.p. has linear quotients}) or $\Delta$ is a $t$-path simplicial tree of a broom graph (see Theorem~\ref{thm:broomgraph-linearquotient}). However, given an arbitrary simplicial tree $\Delta$, Example \ref{exa: 3-path of a rooted tree with no linear resolution} shows that $I(\Delta)^{[\nu(\Delta)]}$ need not have a linear resolution. It is interesting to characterize those simplicial trees $\Delta$ for which $I(\Delta)^{[\nu(\Delta)]}$ has a linear resolution.
 \medskip
 
     2) Let $\Delta$ be a pure simplicial tree. From Theorem \ref{theo:linearly related}, we know that if $I(\Delta)^{[k]}$ is linearly related then $I(\Delta)^{[k+1]}$ is also linearly related. Does this statement hold for the linearity of the resolution, that is, if $I(\Delta)^{[k]}$ has a linear resolution then does $I(\Delta)^{[k+1]}$ has also a linear resolution?  
     \medskip
     
    3) In Theorem \ref{thm: equality regularity t-path ideals of path}, we provide a closed formula for $\reg(R/I_{n,t}^{[k]})$, where $I_{n,t}$ is the $t$-path ideal of the path graph $P_n$. In general, it seems very difficult to derive such a formula for the squarefree powers of an arbitrary simplicial tree. It could be interesting to establish at least an upper or lower bound for $\reg(R/I(\Delta))$, where $\Delta$ is a simplicial tree.
    
In a more general context, a lower bound for certain squarefree powers can be obtained as an easy generalization of \cite[Theorem 2.1]{EHHM}, whose proof is provided below for the sake of completeness.

	\begin{Proposition}	\label{prop:lowerbound}
		Let $\Delta$ be a pure simplicial complex of dimension $d-1$. Then \[ k-1+(d-1)\nu_1(\Delta) \leq 
		\reg\left(\frac{R}{I(\Delta)^{[k]}} \right)\] 
		for all $1 \leq k \leq \nu_1(\Delta)$. 
	\end{Proposition}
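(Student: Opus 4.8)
The plan is to exhibit a single non-vanishing graded Betti number of $I(\Delta)^{[k]}$ sitting exactly on the line $j-i=k-1+(t-1)\nu_1(\Delta)$, using the Hochster-type formula (\ref{eq:hochster}) recorded as \cite[Theorem 2.8]{AF}. First I would fix a maximal induced matching $M=\{E_1,\dots,E_p\}$ with $p=\nu_1(\Delta)$ and set $W=\bigcup_{i=1}^{p}E_i$. Since $\Delta$ is pure of dimension $t-1$, each $E_i$ has exactly $t$ vertices and the $E_i$ are pairwise disjoint, so $|W|=pt$; moreover $I(\Delta)^{[k]}$ is generated in degree $kt$.

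Next I would apply (\ref{eq:hochster}) to the squarefree ideal $I=I(\Delta)^{[k]}$ and isolate the single induced subcomplex $\Gamma$ of the facet complex $\Delta(I)$ obtained by restricting to $W$ (so $V(\Gamma)=W$ and $|V(\Gamma)|=pt$). Because $M$ is an induced matching, the only facets of $\Delta$ contained in $W$ are $E_1,\dots,E_p$ themselves; hence the $k$-matchings of $\Delta$ supported inside $W$ are precisely the unions of $k$ of the blocks $E_i$, and the facets of $\Gamma$ are exactly the $\binom{p}{k}$ sets $\bigcup_{i\in B}E_i$ with $B\in\binom{[p]}{k}$. Taking complements inside $W$, the facets of $\Gamma^c_{W}$ are the unions of the remaining $p-k$ blocks. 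Since every summand in (\ref{eq:hochster}) is a non-negative integer, it suffices to show that this one term is non-zero, i.e. that $\tilde{H}_{(p-k)-1}(\Gamma^c_{W};K)\neq 0$; this forces $\beta_{p-k,\,pt}(I)\neq 0$, hence $\reg(I)\ge pt-(p-k)=k+(t-1)p$, that is $\reg(R/I)\ge k-1+(t-1)\nu_1(\Delta)$.

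The core of the argument, and the step I expect to be the main obstacle, is the topological computation of $\Gamma^c_{W}$. Writing $m=p-k$, the complex $\Gamma^c_{W}$ is the complex of all subsets of $W$ meeting at most $m$ of the blocks $E_1,\dots,E_p$. I would cover it by the full simplices $\sigma_B$ on $\bigcup_{i\in B}E_i$ for $B\in\binom{[p]}{m}$; each $\sigma_B$ is contractible and an arbitrary intersection $\bigcap_j\sigma_{B_j}=\sigma_{\cap_j B_j}$ is either empty or contractible, so the Nerve Lemma identifies $\Gamma^c_{W}$ up to homotopy with the $(m-1)$-skeleton of the $(p-1)$-simplex, equivalently a wedge of $\binom{p-1}{m}$ spheres $S^{m-1}$. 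Consequently $\tilde{H}_{m-1}(\Gamma^c_{W};K)\cong K^{\binom{p-1}{m}}\neq 0$ whenever $1\le m\le p-1$, i.e. for $1\le k\le p-1$. The remaining case $k=p=\nu_1(\Delta)$ is immediate, since then $\Gamma^c_{W}=\{\emptyset\}$ and $\tilde{H}_{-1}(\{\emptyset\};K)=K$ by convention, so the degree-$pt$ generator $f_1\cdots f_p$ contributes $\beta_{0,pt}(I)\neq 0$. In every case the exhibited Betti number yields the claimed bound, and the purity hypothesis is exactly what guarantees $|W|=pt$ and hence the correct internal degree. As a safeguard I would cross-check the nerve computation against a direct Mayer--Vietoris induction on $p$, since pinning down the precise homological degree $m-1$ is the only delicate point.
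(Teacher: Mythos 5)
Your argument is correct, and it reaches the same non-vanishing Betti number $\beta_{\nu_1-k,\,\nu_1 t}\bigl(I(\Delta)^{[k]}\bigr)\neq 0$ as the paper, but by a genuinely different route. The paper restricts to the subcomplex $\Delta'$ spanned by an induced matching $\{F_1,\dots,F_r\}$, invokes \cite[Lemma 4.4]{HHZ} to pass Betti numbers of $I(\Delta')^{[k]}$ up to $I(\Delta)^{[k]}$, and then exploits the fact that $f_1,\dots,f_r$ is a regular sequence: via the substitution $z_j\mapsto f_j$ and \cite[Theorem 2.1]{EHHM}, the resolution of $I(\Delta')^{[k]}$ is the degree-rescaled resolution of the squarefree power $J^{[k]}$ of $(z_1,\dots,z_r)$, whose Betti numbers are known because $J^{[k]}$ is squarefree strongly stable (\cite[Theorem 7.4.1]{HH1}). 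You instead perform the restriction implicitly, by isolating the single summand of the Hochster-type formula (\ref{eq:hochster}) corresponding to $W=\bigcup_i E_i$ (legitimate, since all summands are non-negative), and then compute the relevant reduced homology directly: the complement complex is the union of the full simplices on the $(p-k)$-fold unions of blocks, and the nerve lemma identifies it with the $(p-k-1)$-skeleton of the $(p-1)$-simplex, a wedge of $\binom{p-1}{p-k}$ spheres. This is consistent with the Eliahou--Kervaire count the paper obtains by citation, so the two computations corroborate each other. What your approach buys is self-containedness (no appeal to \cite{EHHM}, \cite{HHZ}, or the strongly stable machinery) plus the exact multiplicity $\binom{p-1}{p-k}$ of the extremal Betti number; what it costs is the topological verification, where the only points needing care are the ones you flag (that all non-empty intersections of cover elements are full simplices, hence contractible, and the separate treatment of $k=p$ via $\tilde{H}_{-1}(\{\emptyset\};K)=K$). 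Both proofs use purity in the same way, to guarantee single-degree generation and $|W|=\nu_1(\Delta)\,t$.
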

	\begin{proof}
	Denote by $r$ the induced matching number of $\Delta$, for simplicity. Let $\Delta'$ be a subcollection of $\Delta$ with $r$ facets $\{F_1,\dots, F_r\}$ which is an induced matching of $\Delta$. Since $\reg (I(\Delta')^{[k]})\leq \reg (I(\Delta)^{[k]})$ from \cite[Lemma 4.4]{HHZ}, it is enough to prove that $k+(d-1)r \leq \reg (I(\Delta')^{[k]})$, in particular that $\beta_{r-k,rd}(I(\Delta'))\neq 0$. Consider the ideal $J=(z_1,\dots,z_r)$ in a new polynomial ring $R=K[z_1,\dots,z_r]$. Since $J^{[k]}$ is a squarefree strongly stable ideal in $R$, we have $\beta_{r-k,r}(J^{[k]})\neq 0$ from \cite[Theorem 7.4.1]{HH1}. Now, let $f_j=\prod_{i\in F_j}x_i$. Since $\Delta$ is pure of dimension $d-1$ and $\{F_1,\dots, F_r\}$ is an induced matching of $\Delta$, then $f_1,\dots,f_r$ is a regular sequence in $S=K[x_i:i\in \cup_{j=1}^rF_j]$ and $\deg(f_i)=t$ for all $i=1,\dots,r$. Set $I=I(\Delta')=(f_1,\dots,f_r)$ and the map $\phi:R\rightarrow S$ with $\phi(z_j)=f_j$ for all $j=1,\dots,r$. As explained in \cite[Theorem 2.1]{EHHM}, the $i$-th free module in the minimal free resolution of $I^{[k]}$ is given by $S(-dk-di)^{\beta_i(I^{[k]})}$ and $\beta_{i,j}(J^{[k]})=\beta_{i,tj}(I^{[k]})$. Then $\beta_{r-k,rd}(I)\neq 0$ because $\beta_{r-k,r}(J^{[k]})\neq 0$.
	\end{proof}

    We recall that, for a simple graph $G$, it is expected that 
    $$k + \nu_1(G) \leq \reg(I(G)^{[k]}) \leq k + \nu(G)$$ 
    for $1\leq k\leq \nu(G)$, as discussed in \cite[page 3]{EHHM}. Based on the results of this paper, we present the following conjecture. 
    
    \begin{Conjecture}
         Let $\Delta$ be a simplicial tree of dimension $d-1$. Then 
         \[ k-1+(d-1)\nu_1(\Delta) \leq 
		\reg\left(\frac{R}{I(\Delta)^{[k]}} \right)\leq k-1+(d-1)\nu(\Delta)
        \] 
		for all $1 \leq k \leq \nu(\Delta)$. 
    \end{Conjecture}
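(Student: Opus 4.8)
The plan is to treat the two inequalities separately, the lower bound being (largely) a consequence of results already in the paper and the upper bound requiring an inductive peeling argument whose base case is the principal obstruction. Throughout I assume $\Delta$ is pure: a simplicial tree of dimension $t-1$ whose facet ideal is homogeneous of degree $t$—which is what makes $\reg(R/I(\Delta)^{[k]})$ behave well—is necessarily pure, so Proposition~\ref{prop:lowerbound} is available.

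For the lower bound $k-1+(t-1)\nu_1(\Delta)\le \reg(R/I(\Delta)^{[k]})$, the range $1\le k\le \nu_1(\Delta)$ is settled verbatim by Proposition~\ref{prop:lowerbound}: one restricts to an induced matching $\Delta'=\langle F_1,\dots,F_{\nu_1}\rangle$, uses that $f_1,\dots,f_{\nu_1}$ is a regular sequence of forms of degree $t$, transports the nonvanishing $\beta_{\nu_1-k,\nu_1}(J^{[k]})\neq 0$ of the squarefree strongly stable ideal $J^{[k]}$ across the map $z_j\mapsto f_j$, and invokes $\reg(I(\Delta')^{[k]})\le\reg(I(\Delta)^{[k]})$ from \cite[Lemma 4.4]{HHZ}. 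This argument genuinely caps at $k=\nu_1$, since the regular sequence has only $\nu_1$ members. To cover $\nu_1(\Delta)<k\le\nu(\Delta)$, the natural route is the monotonicity estimate $\reg(R/I(\Delta)^{[k]})\ge \reg(R/I(\Delta)^{[k-1]})+1$ for pure simplicial trees: chaining it from $k=\nu_1$ yields $\reg(R/I(\Delta)^{[k]})\ge (\nu_1-1+(t-1)\nu_1)+(k-\nu_1)=k-1+(t-1)\nu_1$, exactly the claim. The tool I would use for this monotonicity is the good-leaf reduction $I(\Delta)^{[k]}:(f_1)=I(\Delta\setminus F_1)^{[k-1]}$ of Lemma~\ref{reduction}(1), together with a mapping-cone comparison of the Betti numbers of $I^{[k]}$ and $I^{[k-1]}$.

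For the upper bound $\reg(R/I(\Delta)^{[k]})\le k-1+(t-1)\nu(\Delta)$, I would induct on the number of facets along a good leaf order $F_r,\dots,F_1$, using the reduction Lemma~\ref{reduction} and \cite[Lemma 2.10]{DHS} exactly as in the proof of Theorem~\ref{thm: upper bound}. With $d=t$ the short exact sequences give, facet by facet,
\[
\reg\!\left(\frac{R}{I(\Delta_i)^{[k]}+J_i}\right)\le\max\left\{\,t+\reg\!\left(\frac{R}{I(\Delta_i\setminus F_i)^{[k-1]}+(J_i:(f_i))}\right),\ \reg\!\left(\frac{R}{I(\Delta_{i+1})^{[k]}+J_{i+1}}\right)\,\right\},
\]
and one peels off facets one at a time, seeking to bound each colon term by the inductive hypothesis applied to the smaller tree $\Delta_i\setminus F_i$, whose matching number satisfies $\nu(\Delta_i\setminus F_i)\le \nu(\Delta)-1$, so that the degree shift by $t$ is absorbed into the $+(t-1)$ and $+1$ bookkeeping. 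The base case is $k=1$, namely $\reg(R/I(\Delta))\le (t-1)\nu(\Delta)$.

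The main obstacle is twofold, and both parts are genuinely open. First, the base case $k=1$ is precisely the upper-bound conjecture for the regularity of facet ideals of simplicial trees proposed in \cite{KK1}, which is not known in general; any complete proof of the present conjecture subsumes it. Second, even granting the base case, the inductive step is \emph{not} a formal corollary of Theorem~\ref{thm: upper bound}: there the decisive simplification was that for a \emph{path} graph the reduced complex $\Delta_i\setminus F_i$ and the colon $J_i:(f_i)=I_{[1,i-2]}+(x_{i-1})$ live on disjoint vertex sets, so regularities add and the numerics $\nu_1(\Gamma_{i-2})+\nu_1(\Gamma_{n-i+1-kt})\le\nu_1(\Gamma_{n-1-kt})$ close the estimate. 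For an arbitrary simplicial tree the branch structure makes $J_i:(f_i)$ and $I(\Delta_i\setminus F_i)^{[k-1]}$ share variables, and $J_i:(f_i)$ need no longer be a lower facet ideal plus a single variable, so the clean additive bookkeeping fails. The crux of any proof will therefore be an intrinsic, order-independent estimate for $\reg\bigl((I(\Delta_i)^{[k]}+J_i):(f_i)\bigr)$ replacing the disjoint-support computation valid for path graphs—equivalently, a precise control of how $\nu$ and $\nu_1$ transform under $\Delta\mapsto\Delta\setminus F$ and $\Delta\mapsto\langle F_i,\dots,F_r\rangle$ along a good leaf order—and this is where I expect the real difficulty to lie, likely demanding a genuinely new combinatorial input beyond what is developed here.
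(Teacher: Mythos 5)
The statement you were asked to prove is not a theorem of the paper: it appears there as a \emph{Conjecture}, supported only by the partial lower bound of Proposition~\ref{prop:lowerbound}. So there is no proof in the paper to compare against, and your overall conclusion that both halves are genuinely open is the correct one. As a research plan your proposal is accurate about where the real difficulties sit: the base case $k=1$ of the upper bound is precisely the conjecture of \cite{KK1}, and the peeling argument of Theorem~\ref{thm: upper bound} exploits the disjoint-support structure of $J_i:(f_i)$ that is special to path graphs and fails for trees with branching.

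That said, two steps that you present as routine are themselves gaps. First, purity: the conjecture is stated for an arbitrary simplicial tree of dimension $t-1$, and such a tree need not be pure (e.g.\ facets $\{1,2,3\}$ and $\{3,4\}$ form a simplicial tree of dimension $2$); your opening claim that homogeneity forces purity is circular, since for a non-pure tree $I(\Delta)^{[k]}$ is simply not generated in a single degree. Proposition~\ref{prop:lowerbound} is proved only for pure complexes --- its proof needs $f_1,\dots,f_r$ to be a regular sequence of forms of the \emph{same} degree $t$ --- so for non-pure trees even the range $1\le k\le \nu_1(\Delta)$ of the lower bound is not covered by the paper. Second, and more seriously, the monotonicity $\reg(R/I^{[k]})\ge \reg(R/I^{[k-1]})+1$ that you invoke to push the lower bound from $k=\nu_1(\Delta)$ up to $k=\nu(\Delta)$ does not follow from Lemma~\ref{reduction}(1) plus a mapping cone. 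The short exact sequence
\[
0 \longrightarrow \bigl(R/(I^{[k]}:f_1)\bigr)(-t) \longrightarrow R/I^{[k]} \longrightarrow R/\bigl(I^{[k]}+(f_1)\bigr) \longrightarrow 0
\]
only yields $\reg\bigl(R/(I^{[k]}:f_1)\bigr)+t \le \max\bigl\{\reg(R/I^{[k]}),\, \reg\bigl(R/(I^{[k]}+(f_1))\bigr)+1\bigr\}$: regularity can drop when passing to a colon ideal, and the mapping cone of the multiplication map need not be minimal, so no inequality in the direction you need comes out formally without also controlling the quotient term. (Monotonicity does hold for path graphs, but only as a consequence of the closed formula in Theorem~\ref{thm: equality regularity t-path ideals of path} together with Remark~\ref{Remark}; it is not an available ingredient for general trees.) Hence even the portion of the conjecture you describe as ``largely a consequence of results already in the paper'' contains an unproved step for $\nu_1(\Delta)<k\le \nu(\Delta)$.
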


    4) In \cite{AF2}, a combinatorial description of all graded Betti numbers of $t$-path ideals of path graphs and cycle graphs is given. It would be of interest to give such a description for the squarefree powers of $t$-path ideals of path graphs and cycle graphs.\\

		{\bf Declaration of competing interest}
  
The authors declare that they have no known competing financial interests or personal
relationships that could have appeared to influence the work reported in this paper.

{\bf Data availability}

No data was used for the research described in the article.

	\end{document}